\renewcommand\subsubsection{\@secnumfont}{\bfseries}%
\renewcommand\subsubsection{\@startsection{subsubsection}{3}
  \z@{.5\linespacing\@plus.7\linespacing}{-.5em}%
  {\normalfont\bfseries}}
\numberwithin{equation}{section}
\renewcommand*{\backref}[1]{}
\theoremstyle{plain}
\newtheorem{thm}{Theorem}[section]
\newtheorem{cor}[thm]{Corollary}
\newtheorem{prop}[thm]{Proposition}
\newtheorem{lem}[thm]{Lemma}
\newtheorem{conj}[thm]{Conjecture}
\newtheorem{defn}[thm]{Definition}
\newtheorem{ex}[thm]{Example}
\newtheorem{notation}[thm]{Notation}
\newenvironment{taggedtheorem}[1]
 {\taggedtheoremx}
 {\endtaggedtheoremx}
\theoremstyle{remark}
\newtheorem{remark}[thm]{Remark}
\theoremstyle{plain}
\DeclareMathOperator{\Q}{\mathbb{Q}}
\DeclareMathOperator{\M}{\textup{Mod}}
\DeclareMathOperator{\fH}{\mathfrak{H}}
\newcommand{\defeq}{\mathrel{\mathop:}=}
\newcommand{\xdownarrow}[1]{%
  {\left\downarrow\vbox to #1{}\right.\kern-\nulldelimiterspace}
}
\let\oldproofname=\proofname
\renewcommand{\proofname}{\rm\bf{\oldproofname}}
\title[Prym Representations and Twisted Cohomology]{Prym Representations and Twisted Cohomology of the Mapping Class Group with Level Structures}
\author{Xiyan Zhong}
\begin{document}

\begin{abstract}
We compute the twisted cohomology of the mapping class group with level structures, with coefficients in the $r$-tensor powers of the Prym representations for any positive integer $r$. When $r\ge 2$, we show that the cohomology exhibits instability for large genus, whereas it remains stable for $r=0$ or $r=1$. As a corollary, we prove that the symplectic Prym representation associated with any finite abelian regular cover of a non-closed finite-type surface is infinitesimally rigid.
\end{abstract}

\maketitle

\section{Introduction}

The (pure) mapping class group $\M(S)$ of an oriented surface $S$ is the group of isotopy classes of orientation-preserving homeomorphisms that fix the boundary pointwise, including punctures and boundary components. For each integer $\ell\ge 2$, the level-$\ell$ mapping class group of $S$ is the finite-index subgroup of $\M(S)$ consisting of elements that act trivially on $H_1(S;\mathbb{Z}/\ell)$. An important family of representations defined on these level subgroups are the Prym representations, first studied by Looijenga \cite{prym}.

The main result of this paper is the computation of the cohomology of the level-$\ell$ mapping class group with twisted coefficients given by the $r$-tensor powers of the Prym representation, for arbitrary $r\ge 1$, in a range where the genus of $S$ is sufficiently large. Our calculations uncover a new phenomenon: when $r\ge 2$, this twisted cohomology exhibits instability with respect to the genus of $S$, in contrast to the known cohomological stability of mapping class groups and the stable behavior in the case $r=1$. 

As another consequence of our work, we prove that the Prym representation is infinitesimally rigid when the genus of $S$ is at least $41$.

The central technical innovation of this paper is the construction of a refined bordification of the moduli space of Riemann surfaces with level structures in Section 4. The boundary strata of this space are described using the theory of orbit configuration spaces, which allows us to compute its cohomology. By analyzing a specific Leray spectral sequence associated with this bordified moduli space and applying the theory of mixed Hodge structures, we derive our main results.

Since the statement of infinitesimal rigidity for the Prym representations is more straightforward, we begin by presenting this result.

\subsection{Rigidity Theorem}

Let $\widetilde{S}\to S$ be a regular cover whose deck transformation group is a finite abelian group $A$. Let $\M(S,A)$ denote the subgroup of $\M(S)$ consisting of mapping classes that lift to homeomorphisms of $\widetilde{S}$. The induced action 
$$\M(S,A)\to \text{Aut}(H^1(\widetilde{S};\Q))$$ defines the associated \textbf{Prym representation}. When $S$ is closed, Looijenga determined the image of the Prym representation up to finite index (\cite[Theorem 2.5]{prym}).

When $S$ is not closed, let $\widehat{S}$ denote the closed surface obtained by gluing disks to all boundary components and filling in all punctures of $\widetilde{S}$. Any lift of a homeomorphism of $S$ to $\widetilde{S}$ preserves homology classes of loops around boundary components of $\widetilde{S}$, and commutes with the deck group $A$. It is therefore natural to consider a refined version of the Prym representation:
$$\Phi: \M(S,A)\to \text{Aut}(H^1(\widehat{S};\mathbb{R}))^A.$$ We refer to this as the \textbf{symplectic Prym representation}. We work over $\mathbb{R}$ rather than $\Q$ because, when the image of $\Phi$ is a Lie group, this allows us to study its rigidity in the following sense.

Let $\Gamma$ be a finitely generated group and $G$ a Lie group. A homomorphism $\Phi:\Gamma \to G$ is called \textbf{infinitesimally rigid} if $$H^1(\Gamma;\mathfrak{g})=0,$$ where $\mathfrak{g}$ is the Lie algebra of $G$, and $\Gamma$ acts on $\mathfrak{g}$ via the composition of $\Phi$ and the adjoint representation  $\textup{Ad}:G\to \textup{Aut}(\mathfrak{g})$. Intuitively, infinitesimal rigidity means that $\Phi$ admits no nontrivial first-order deformations. We establish this kind of rigidity for the symplectic Prym representation:

\begin{taggedtheorem}{A}\label{main1}
Let $S$ be a non-closed surface of genus $g\ge 41$. For any finite abelian cover $\widetilde{S}\to S$ with deck transformation group $A$, the associated symplectic Prym representation 
$$\Phi: \M(S,A)\to \text{Aut}(H^1(\widehat{S};\mathbb{R}))^A$$ is infinitesimally rigid.
\end{taggedtheorem}

\begin{remark}\label{rmk1.17}
 Infinitesimal rigidity implies other kinds of rigidity as follows:
\begin{enumerate}
\item Weil (\cite[p.~152]{weil}) proved that an infinitesimally rigid representation is \textbf{locally rigid}. A representation $\Phi$ is locally rigid
if $[\Phi]$ is an isolated point in $Hom(\Gamma,G)/G$, i.e. any $\Phi'\in Hom(\Gamma,G)$ sufficiently close to $\Phi$ is conjugate to $\Phi$. The converse is false: locally rigidity does not imply infinitesimal rigidity (see e.g \cite[(2.10.4)]{var}).
\item For local systems on quasi-projective varieties, infinitesimal rigidity also implies \textbf{cohomological rigidity} when the Lie group $G$ is semi-simple. See the definition of cohomological rigidity in \cite{esna} and \cite{klev}.
\end{enumerate} 
\end{remark}

\subsection{Computations of Twisted Cohomology} To state our results on the twisted cohomology of mapping class groups with level structures, we introduce the following setup. Let $\Sigma_{g,p}^b$ denote a genus-$g$ surface with $p$ (ordered) punctures and $b$ boundary components. We omit $p$ or $b$ when it is $0$. Let $\M_{g,p}^b$ denote the mapping class group of $\Sigma_{g,p}^b$, and define the level-$\ell$ mapping class group of $\Sigma_{g,p}^b$ by
$$\M_{g,p}^b(\ell)=\text{Ker}(\M_{g,p}^b\to \text{Aut}(H_1(\Sigma_{g,p}^b;\mathbb{Z}/\ell)).$$
We remark that when $p+b\ge 2$, the kernel of the action of $\M_{g,p}^b$ on $H_1(\Sigma_{g};\mathbb{Z}/\ell)$ strictly contains $\M_{g,p}^b(\ell)$.

Let $\mathcal{D}=H_1(\Sigma_g;\mathbb{Z}/\ell)$. Consider the composition of group homomorphisms
$$\pi_1(\Sigma_{g,p}^b)\to H_1(\Sigma_{g,p}^b;\mathbb{Z}/\ell)\to H_1(\Sigma_g;\mathbb{Z}/\ell),$$
where the second map is induced by the embedding $\Sigma_{g,p}^b\to\Sigma_g$ obtained by gluing disks to all boundary components and filling in all punctures of $\Sigma_{g,p}^b$. This group homomorphism induces a regular cover of $\Sigma_{g,p}^b$ with deck transformation group $\mathcal{D}$, which we denote by $S_{\mathcal{D}}\to \Sigma_{g,p}^b$. By covering space theory, elements of $\M_{g,p}^b(\ell)$ lift to homeomorphisms of $S_{\mathcal{D}}$ that fix all punctures and boundary components pointwise. Define $$\fH_{g,p}^b(\ell;\Q)\defeq H^1(S_{\mathcal{D}};\Q),$$ which is a $\M_{g,p}^b(\ell)$-module. We refer to $\fH_{g,p}^b(\ell;\Q)$ as the Prym representation of $\M_{g,p}^b(\ell)$. Our central result is the computation of the cohomology
$$H^{k}(\M_{g,p}^b(\ell);\fH_{g,p}^b(\ell;\Q)^{\otimes r})$$
in a range where the genus $g$ is sufficiently large relative to $k$, for arbitrary $r\ge 1$:

\begin{taggedtheorem}{B}\label{main2}
Let $A'_{r}(\ell)^{\bullet}$ be the graded $\Q$-vector space defined in \eqref{mainsubspace} of Section 5. The symmetric group $S_r$ acts naturally on $A'_{r}(\ell)^{\bullet}$ by definition, and also on $\fH_{g,p}^b(\ell;\Q)^{\otimes r}$ by permuting tensor factors. There is a graded $S_r$-equivariant map of $H^{\bullet}(\M_{g,p}^b(\ell);\mathbb{Q})$-modules:
$$\begin{aligned}H^{\bullet}(\M_{g,p}^b(\ell);\mathbb{Q})  \otimes A'_{r}(\ell)^{\bullet} \to H^{\bullet-r}(\M_{g,p}^b(\ell);\fH_{g,p}^b(\ell;\Q)^{\otimes r})\end{aligned},$$
which is an isomorphism in degrees $k$ such that $g\ge 2k^2+7k+2$.
\end{taggedtheorem}
Here $A'_{r}(\ell)^{\bullet}$ is a subspace of the rational cohomology ring of a moduli space denoted $\mathcal{C}_{g,r}(\ell)$. This moduli space contains the moduli space $\mathcal{M}_{g,r}(\ell)$ of Riemann surfaces homeomorphic to $\Sigma_{g,r}$ with a level-$\ell$ structure as an open subvariety. In section 4, we construct $\mathcal{C}_{g,r}(\ell)$ explicitly, which is a bordification of $\mathcal{M}_{g,r}(\ell)$ with boundary strata indexed by orbit configuration spaces. We compute in Theorem \ref{coh2} the rational cohomology of $\mathcal{C}_{g,r}(\ell)$ for sufficiently large $g$. As defining $A'_{r}(\ell)^{\bullet}$ requires additional notation from Section 4, we postpone its formal definition to Section 5.

Theorem \ref{main2} enables us to compute the rational dimension of $$H^{k-r}(\M_{g,p}^b(\ell);\fH_{g,p}^b(\ell;\Q)^{\otimes r})$$ explicitly, thanks to Putman's theorem (\cite[Theorem A]{AndyStable}), which states that
$$H^k(\M_{g,p}^b(\ell);\Q) \cong H^k(\M_{g,p}^b; \Q) \text{ for } g\ge 2k^2 + 7k + 2.$$ 
By the Madsen-Weiss Theorem (\cite[Theorem 1.1.1]{Madsen}) and Looijenga's extension to punctured surfaces (\cite[Prop.~2.2]{LooiStable}), this implies that 
$$H^{\bullet}(\M_{g,p}^b(\ell);\mathbb{Q})\cong \Q[e_1,\cdots,e_p]\otimes \Q[\kappa_1,\kappa_2,\cdots],$$
where $\kappa_i\in H^{2i}(\M_{g,p}^b;\Q)$ ($i\ge 1$) are the Miller-Morita-Mumford classes (\cite{miller}, \cite{morita}), and $e_j\in H^2(\M_{g,p}^b;\mathbb{Q})$ is the Euler class associated with the central extension
$$ 1\to \mathbb{Z}\to \M_{g,p-1}^{b+1} \to \M_{g,p}^b\to 1$$
obtained by blowing up the $j$-th puncture of $\Sigma_{g,p}^b$ to a boundary component. The central $\mathbb{Z}$ is generated by the Dehn twist around this boundary component. See also \cite[Sec.~7]{morita} for an alternative definition of $e_j$.

\begin{remark}
From the definition of $(A'_{r,\ell})^{\bullet}$ (see \eqref{mainsubspace}), it follows that all its elements are of even degree. Therefore, Theorem \ref{main2} implies that for odd $k$, $$H^{k-r}(\M_{g,p}^b(\ell);\fH_{g,p}^b(\ell;\Q)^{\otimes r}=0, \text{ for }g\ge 2k^2+7k+2.$$ In particular, Theorem \ref{main1} follows as a corollary of the special case $k=3, r=2$.
\end{remark}

\subsection{Instability and Historical Remarks} Computations via Theorem \ref{main2} show that for $r\ge 2$ and an even number $k$,
$$H^{k-r}(\M_{g,p}^b(\ell);\fH_{g,p}^b(\ell;\Q)^{\otimes r})$$ is independent of $b$, but depends on $g$, $p$, and $\ell$ for $r\ge 2$. This instability arises from the fact that our calculations involve orbit configuration spaces, which depend on the deck transformation group $\mathcal{D}=H_1(\Sigma_g;\mathbb{Z}/\ell)$.

This instability also helps us identify relationship with some other twisted cohomology groups related to mapping class groups, which we now recall.

The mapping class group $\M_{g,p}^b$ acts non-trivially on $H^1(\Sigma_{g,p}^b;\Q)$. For any positive integer $r$, the twisted cohomology
\begin{equation}\label{hmh} H^{k-r}(\M_{g,p}^b;H^1(\Sigma_{g,p}^b;\Q)^{\otimes r})\end{equation}
has been studied by Looijenga \cite{LooiStable}, Kawazumi \cite{kawa}, Randal-Williams \cite{rw1}, and Putman \cite{AndyStable}. 

In the case $p = b = 0$, this twisted cohomology \eqref{hmh} was computed by Looijenga (\cite[Corollary 3.3]{LooiStable}) in 1996 for $g \ge \frac{3}{2}k + 1$. He constructed an $S_r$-equivariant map analogous to the one in Theorem \ref{main2}, which allowed him to further compute the cohomology of $\M_g$ with coefficients in any irreducible representation of $Sp_{2g}(\mathbb{C})$.

In 2008, Kawazumi (\cite[Theorem 1.A]{kawa}) computed the twisted cohomology \eqref{hmh} for $\Sigma_g^b$ with $b \ge 1$, also in the same stable range, but over $\mathbb{Z}$, using a different inductive approach. Kawazumi’s results exhibit stability with respect to both $g$ and $b$.

In 2018, Randal-Williams (\cite[Appx.~B]{rw1}) identified \eqref{hmh} as a specific $S_r$-module in the cases of $\Sigma_g$ and $\Sigma_g^1$. Later, in 2020, Randal-Williams and Kupers (\cite[Theorem 3.15]{rwk}) incorporated additional structure into the coefficients, allowing them to compute the cohomology of $\M_g^1$ with coefficients in any algebraic representation of $Sp_{2g}$.

\begin{remark}
Randal-Williams suggested that it might be possible to generalize the results of Theorem \ref{main2} from coefficients in tensor powers to coefficients in Schur functors. We plan to explore this direction in future work.
\end{remark}

For completeness, we derive from the work of Looijenga and Kawazumi the remaining case of \eqref{hmh} where $b = 0$ and $p \ge 1$, and state the general result for any non-closed surface $\Sigma_{g,p}^b$ in Theorem \ref{thmA}.

The twisted cohomology \eqref{hmh} is related to the cohomology $$H^{k-r}(\M_{g,p}^b(\ell);\fH_{g,p}^b(\ell;\Q)^{\otimes r})$$ in the following way. Recall that $\fH_{g,p}^b(\ell;\Q) = H^1(S_{\mathcal{D}};\Q)$, where $S_{\mathcal{D}} \to \Sigma_{g,p}^b$ is the regular cover with deck group $\mathcal{D}=H_1(\Sigma_g;\mathbb{Z}/\ell)$. The covering map induces a natural map between tensor powers:
$$H^1(\Sigma_{g,p}^b;\Q)^{\otimes r} \to \fH_{g,p}^b(\ell;\Q)^{\otimes r},$$
which in turn induces a map on twisted cohomology:
\begin{equation}\label{map1}
H^{k-r}(\M_{g,p}^b;H^1(\Sigma_{g,p}^b;\Q)^{\otimes r}) \to H^{k-r}(\M_{g,p}^b(\ell);\fH_{g,p}^b(\ell;\Q)^{\otimes r}).
\end{equation}

Putman studied the case $r = 1$ and showed that the map \eqref{map1} is an isomorphism when $g \ge 2k^2 + 7k + 1$ (\cite[Theorem C]{AndyStable}). This result implies that for $r = 1$, the right-hand side of \eqref{map1} is independent of $g$, $b$, and $\ell$, since the left-hand side is. 

For general $r \ge 2$, Putman conjectured the following:
\begin{conj}[Putman \protect{\cite[Remark 1.8]{AndyStable}}]
For $r\ge 2$, the map \eqref{map1} is not an isomorphism.
\end{conj}  
Since Theorem \ref{main2} shows that the right-hand side of \eqref{map1} is not stable with respect to the genus $g$, in contrast with the known stability of the left-hand side in \eqref{hmh}, we show that Putman's conjecture is true:

\begin{cor}
For $r\ge 2$ and $k$ even, the map \eqref{map1} is not an isomorphism when $g\ge \text{max}(\frac{3}{2} k+1,2k^2+7k+2)=2k^2+7k+2$. 
\end{cor}
\begin{remark}
For $k$ odd, both sides of the map \eqref{map1} vanish when $g\ge 2k^2+7k+2$. 
\end{remark}

\subsection{An Example} To gain more insight into how these twisted cohomology groups in the map \eqref{map1} differ when $r\ge 2$, we present an example for the case $r=2$. By taking $r=2$ in Theorem \ref{thmA}, we obtain
\[ H^{\bullet-2}(\M_{g,p}^b;H^1(\Sigma_{g,p}^b;\Q)^{\otimes 2})
\cong H^{\bullet}(\M_{g,p}^b;\Q)\otimes \left( 
\begin{array}{c}
u_1 u_2\,\Q[u_1,u_2] \\[1mm]
\oplus\, \Q[u_{\{1,2\}}]\,a_{\{1,2\}}
\end{array}
\right), \]
in degrees $k$ such that $g\ge \frac{3}{2} k+1$, with $deg(u_1)=deg(u_2)=deg(a_{\{1,2\}})=2$. Notably, this twisted cohomology is independent of the genus $g$.

In contrast, taking $r=2$ in Theorem \ref{main2} yields
\[
H^{\bullet-2}(\M_{g,p}^b(\ell);\fH_{g,p}^b(\ell;\Q)^{\otimes 2})
\cong H^{\bullet}(\M_{g,p}^b(\ell);\Q)\otimes \left(
\begin{array}{c}
v_1 v_2\,\Q[v_1,v_2] \\[1mm]
\oplus\, \displaystyle \bigoplus_{d\in \mathcal{D}} \Q[v_{(\{1<2\},d)}]\,a_{(\{1<2\},d)}
\end{array}
\right),
\]
in degrees $k$ such that $g\ge 2k^2+7k+2$,
with $\deg(v_1)=\deg(v_2)=\deg(a_{(\{1<2\},d)})=2$. Since there is a nontrivial summand corresponding to each 
$
d\in \mathcal{D}=(\mathbb{Z}/\ell)^{2g},
$
the twisted cohomology group $H^{\bullet-2}(\M_{g,p}^b(\ell);\fH_{g,p}^b(\ell;\Q)^{\otimes 2})$ exhibits a dependence on the genus $g$.

The key distinction between these two twisted cohomology groups lies in the nature of the cohomology classes $a_{\{1,2\}}$ and $a_{(\{1<2\},d)}$. The class $a_{\{1,2\}}$ is associated with the moduli space $\mathcal{C}_{g,2}$ of closed genus-$g$ Riemann surfaces with two (not necessarily distinct) marked points. More precisely, $a_{\{1,2\}}$ represents the Poincar\'e dual of the subvariety of $\mathcal{C}_{g,2}$ where the two marked points coincide (i.e. $x_2=x_1$).

In contrast, for each $d\in \mathcal{D}=(\mathbb{Z}/\ell)^{2g}$, the class $a_{(\{1<2\},d)}$ relates to the moduli space $\mathcal{C}_{g,2}(\ell)$ that we construct in Section 4. This moduli space is a bordification of $\mathcal{M}_{g,2}(\ell)$, the moduli space of Riemann surfaces homeomorphic to $\Sigma_{g,2}$ with level-$\ell$ structures. One can roughly view $\mathcal{C}_{g,2}(\ell)$ as the moduli space of two (not necessarily distinct) marked points on the regular $\mathcal{D}$-cover of a closed genus-$g$ Riemann surface. The class $a_{\{1<2\},d}$ is defined as the Poincar\'e dual of the subvariety of $\mathcal{C}_{g,2}(\ell)$ where the marked points $y_1,y_2$ on the $\mathcal{D}$-cover satisfy the relation $y_2=d\cdot y_1$. 

This distinction illustrates that the additional classes in $H^{\bullet-2}(\M_{g,p}^b(\ell);\fH_{g,p}^b(\ell;\Q)^{\otimes 2})$  arise from the richer geometric structure of the moduli space $\mathcal{C}_{g,2}(\ell)$ compared to $\mathcal{C}_{g,2}$.

\subsection{Proof ideas}
To conclude the introduction, we outline our approach to proving Theorem \ref{main2}.  

Our strategy is inspired by Looijenga’s computation of $H^{\bullet}(\M_g; H^1(\Sigma_g; \Q)^{\otimes r})$ (\cite[Corollary 3.3]{LooiStable}). Looijenga studied the Leray spectral sequence associated with the $r$-fold fiber product of the universal curve $\mathcal{M}_{g,1} \to \mathcal{M}_g$. Since we work with the level-$\ell$ mapping class group, we shift our focus from $\mathcal{M}_g$ to $\mathcal{M}_{g,r}(\ell)$, the moduli space of Riemann surfaces homeomorphic to $\Sigma_{g,r}$ equipped with a level-$\ell$ structure. Our approach proceeds as follows:

\begin{enumerate}
    \item We reinterpret $\mathcal{M}_{g,r}(\ell)$ as the moduli space of one marked point on $\Sigma_{g,r}$ and $(r-1)$ marked points on the regular $H_1(\Sigma_g; \mathbb{Z}/\ell)$-cover of $\Sigma_{g,r}$, subject to certain conditions.
    
    \item We construct a partial bordification of $\mathcal{M}_{g,r}(\ell)$, denoted $\mathcal{C}_{g,r}(\ell)$, whose boundary strata are described in terms of orbit configuration spaces. We remark that, unlike Looijenga's bordification, this space $\mathcal{C}_{g,r}(\ell)$ is not the total space of the $r$-fold fiber product of $\mathcal{M}_{g,1}(\ell) \to \mathcal{M}_{g}(\ell)$.
    
    \item We compute the rational cohomology of $\mathcal{C}_{g,r}(\ell)$ for $g$ sufficiently large, stated in Theorem \ref{coh2}.
    
    \item We construct a map $\mathcal{C}_{g,r+1}(\ell) \to \mathcal{M}_{g,1}(\ell)$, where the cohomology of the fiber contains a summand isomorphic to $\fH_{g,1}(\ell;\Q)^{\otimes r}$, the $r$-fold tensor power of the Prym representation.
    
    \item By a theorem of Deligne (\cite[Sec.~8.1]{deligne}, see Theorem \ref{ever}), the Leray spectral sequence associated with the map $\mathcal{C}_{g,r+1}(\ell) \to \mathcal{M}_{g,1}(\ell)$ degenerates at the $E_2$-page.
    
    \item We compute the $E_2$-page of the Leray spectral sequence, and using mixed Hodge theory, we identify $H^{\bullet}(\M_{g,1}(\ell); \fH_{g,1}(\ell; \Q)^{\otimes r})$ as a subspace of $H^{\bullet}(\mathcal{C}_{g,r+1}(\ell); \Q)$.
    
    \item Finally, to extend our results from $\M_{g,1}(\ell)$ to $\M_{g,p}^b(\ell)$ with $p + b \ge 1$, we apply Putman's theory of partial level-$\ell$ representations (\cite{AndyStable}).
\end{enumerate}

\noindent \textbf{Outline.} In Section 2, we introduce key preliminaries, including background on mapping class groups, group cohomology, and mixed Hodge theory. In Section 3, we synthesize the results of Looijenga and Kawazumi to give a complete account of the cohomology groups \eqref{hmh}. In Section 4, we construct the space $\mathcal{C}_{g,r}(\ell)$ and compute its rational cohomology for $g$ sufficiently large. In Section 5, we present the proof of Theorem \ref{main2}. In Section 6, we prove Theorem \ref{main1}.

\noindent \textbf{Acknowledgments.} I sincerely thank my advisor Andrew Putman for suggesting the problem, providing many helpful comments, and offering continuous encouragement throughout the entire project, as well as for numerous suggestions on earlier drafts of the paper. I would also like to thank Eduard Looijenga, Oscar Randal-Williams, and Eric Riedl for helpful conversations. Finally, I thank the anonymous referees for their many useful comments.

\section{Preliminaries}

\subsection{Stable cohomology of the mapping class group}

We summarize key stability results for the cohomology of the mapping class group.

\begin{thm}[Harer stability \cite{harer}]
$H^{k}(\M_{g,p}^b;\mathbb{Z})$ is independent of $g$ and $b$ in degrees $\le N(g)$.
\end{thm}

\begin{remark} Here the number $N(g)$ is the maximal degree $N$ such that the two homomorphisms $$H^N(\M_{g+1,p}^b;\mathbb{Z})\to H^N(\M_{g,p}^{b+1};\mathbb{Z})$$ and $$H^N(\M_{g,p}^{b};\mathbb{Z})\to H^N(\M_{g,p}^{b+1};\mathbb{Z})$$ are isomorphisms. The original bound given by Harer (\cite{harer}) was $N(g;\mathbb{Z})\ge\frac{1}{3} g$, which was later improved by Ivanov (\cite{ivanov}), Boldsen (\cite{boldsen}) and Randal-Williams (\cite{rw}) to $N(g)\ge \frac{2}{3}(g-1)$.
\end{remark}

Unlike the genus $g$ and boundary components $b$, the stable cohomology of $\M_{g,p}^b$ does depend on the number of punctures $p$. For $1\le i \le p$, we have an Euler class $e_i\in H^2(\Sigma_{g,p}^b;\mathbb{Z})$, as we mentioned in the introduction. Below, we provide an alternate definition following Looijenga (\cite[Sec.~2]{LooiStable}) for future purposes:

\begin{itemize}
\item Let $\mathcal{M}_{g,p}$ denote the moduli space of closed Riemann surfaces of genus $g$ with $p$ distinct marked points. Since $\mathcal{M}_{g,p}$ is a rational classifying space for the mapping class group, we have $$H^\bullet(\mathcal{M}_{g,p};\Q)\cong H^\bullet(\M_{g,p};\Q).$$
\item Consider the universal curve $\pi: \mathcal{M}_{g,1}\to \mathcal{M}_g$. Let $\theta$ be its relative tangent sheaf.
\item For each $i$, define $f_i:\mathcal{M}_{g,p}\to \mathcal{M}_{g,1}$ as the map that forgets all but the $i$-th marked point. Then, we define the Euler class $e_i$ as the first Chern class: 
$$e_i=c_1(f_i^*(\theta))\in H^2(\M_{g,p};\Q).$$
\end{itemize} 

\begin{thm}[Looijenga \protect{\cite[Prop.~2.2]{LooiStable}}]\label{wuwu}
The ring homomorphism 
$$H^\bullet(\M_g;\mathbb{Z})[e_1,\cdots,e_p]\to H^{\bullet}(\M_{g,p}^b;\mathbb{Z})$$ is an isomorphism in degrees $\le N(g)$.
\end{thm}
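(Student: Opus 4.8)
The plan is to prove the stronger statement, by induction on the number of punctures $p$, that for every $b\ge 0$ the ring map $\phi_p\colon H^\bullet(\M_g;\mathbb{Z})[e_1,\dots,e_p]\to H^\bullet(\M_{g,p}^b;\mathbb{Z})$ is an isomorphism in degrees $\le N(g)$, with $N(g)$ the Harer stability range, uniform in both $p$ and $b$. The base case $p=0$ is exactly Harer stability, including the independence of the number of boundary components. For the inductive step I would use the central extension
$$1\to \mathbb{Z}\to \M_{g,p-1}^{b+1}\xrightarrow{\ \pi\ }\M_{g,p}^b\to 1$$
obtained by blowing the $p$-th puncture up to a boundary circle; its classifying Euler class is $e_p\in H^2(\M_{g,p}^b;\mathbb{Z})$ by definition. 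Passing to classifying spaces, $B\M_{g,p-1}^{b+1}\to B\M_{g,p}^b$ is the oriented $S^1$-bundle with Euler class $e_p$, so it carries a Gysin long exact sequence (valid over $\mathbb{Z}$)
$$\cdots\to H^k(\M_{g,p}^b)\xrightarrow{\ \pi^*\ }H^k(\M_{g,p-1}^{b+1})\xrightarrow{\ \int\ }H^{k-1}(\M_{g,p}^b)\xrightarrow{\ \cup e_p\ }H^{k+1}(\M_{g,p}^b)\to\cdots.$$

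The two facts about $\pi^*$ I would verify are: (i) $\pi^*$ sends $\kappa_j\mapsto\kappa_j$ and $e_i\mapsto e_i$ for $1\le i\le p-1$ — naturality of the Miller–Morita–Mumford and Euler classes under capping the new boundary circle back to a puncture — so that the restriction of $\pi^*\circ\phi_p$ to the subring $H^\bullet(\M_g;\mathbb{Z})[e_1,\dots,e_{p-1}]$ is precisely the corresponding map $\phi'_{p-1}$ for $\M_{g,p-1}^{b+1}$; and (ii) $\pi^*e_p=0$, because pulling a central extension back along its own quotient map yields a split extension, hence a vanishing Euler class. By the outer inductive hypothesis $\phi'_{p-1}$ is an isomorphism in degrees $\le N(g)$, so by (i) the map $\pi^*$ is surjective there; this forces the transgression $\int$ to vanish in that range and breaks the Gysin sequence into short exact sequences
$$0\to H^{k-2}(\M_{g,p}^b)\xrightarrow{\ \cup e_p\ }H^k(\M_{g,p}^b)\xrightarrow{\ \pi^*\ }H^k(\M_{g,p-1}^{b+1})\to 0,\qquad k\le N(g).$$

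To conclude, I would compare this with the tautological short exact sequence of graded modules for the polynomial ring: writing $R=H^\bullet(\M_g;\mathbb{Z})[e_1,\dots,e_{p-1}]$, multiplication by $e_p$ gives $0\to R[e_p]\to R[e_p]\to R\to 0$, the first map raising degree by $\deg e_p=2$. The triple $(\phi_p,\phi_p,\phi'_{p-1})$ is a morphism between these two short exact sequences, commutative by (i) and (ii). Now induct on the cohomological degree $k$: in degrees $0$ and $1$ both sides agree ($H^0=\mathbb{Z}$, and $H^1(\M_{g,p}^b;\mathbb{Z})=0$ since mapping class groups of genus $g\ge 3$ are perfect, while the polynomial ring has nothing in degree $1$); for the step, the left vertical arrow is $\phi_p$ in degree $k-2$, an isomorphism by the degree induction, the right vertical arrow is $\phi'_{p-1}$ in degree $k$, an isomorphism by the puncture induction, so the five lemma makes the middle arrow $\phi_p$ in degree $k$ an isomorphism. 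The only real content beyond bookkeeping is verifying the naturality in (i) and the splitting in (ii), and — crucially — checking that the range $N(g)$ is not eroded at any step, which is exactly guaranteed by $\pi^*$ being surjective throughout degrees $\le N(g)$.
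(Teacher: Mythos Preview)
The paper does not prove this statement; it is quoted from Looijenga's paper as background in the preliminaries section. So there is no proof in the present paper to compare against. That said, your argument is correct and is exactly the kind of Gysin-sequence bootstrap the paper itself uses repeatedly later (e.g.\ in the proof of Theorem~\ref{thmA} and in Corollary~\ref{boundary1}): use the central extension $1\to\mathbb{Z}\to\M_{g,p-1}^{b+1}\to\M_{g,p}^b\to1$, identify $\pi^*e_p=0$ and $\pi^*e_i=e_i$ for $i<p$, get surjectivity of $\pi^*$ from the inductive hypothesis for $p-1$ punctures, split the Gysin sequence, and run the five lemma against the tautological sequence $0\to R[e_p]\xrightarrow{\cdot e_p}R[e_p]\to R\to0$. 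The double induction (outer on $p$, inner on the degree $k$) is set up correctly, and the base case $p=0$ is precisely Harer stability.

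Two minor remarks. First, the appeal to perfectness of $\M_{g,p}^b$ for $g\ge3$ to handle degree~$1$ is unnecessary: the degree induction already starts cleanly at $k=0$ (the SES at $k=1$ has left term $H^{-1}=0$, and the right vertical map is $\phi'_{p-1}$ in degree~$1$, an isomorphism by the outer induction). Second, the naturality in (i) for $e_i$ with $i<p$ deserves one sentence more than ``naturality'': it follows from the commuting square of central extensions obtained by blowing up the $i$-th and $p$-th punctures in either order, which shows the top extension is the pullback of the bottom one along $\pi$. Neither point affects correctness.
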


The stable integral cohomology of the mapping class group is complicated, but the stable rational cohomology has a beautiful form. Conjectured by Mumford (\cite{mumford}), proved by Madsen and Weiss (\cite[Theorem 1.1.1]{Madsen}), the stable rational cohomology of $\M_g$ is isomorphic to polynomial ring generated by these Miller-Morita-Mumford classes (\cite{miller}, \cite{morita}) $\kappa_i\in H^{2i}(\M_g;\Q)$ for $i\ge 1$. See also \cite{hatcher}, \cite{galatius}, and \cite{wahl} for alternate proofs and expositions.

\begin{thm}[Madsen-Weiss \protect{\cite[Theorem 1.1.1]{Madsen}}]\label{2.2} In degrees $\le N(g)$, we have
$$H^{\bullet}(\M_g;\mathbb{Q})\cong \mathbb{Q}[\kappa_1,\kappa_2,\kappa_3,\cdots].$$
\end{thm}

Combining these two theorems above, in degrees $\le N(g)$, we have
$$H^{\bullet}(\M_{g,p}^b;\mathbb{Q})\cong \mathbb{Q}[e_1,\cdots,e_p]\otimes\Q[\kappa_1,\kappa_2,\kappa_3,\cdots].$$
 
\subsection{Level-$l$ mapping class groups}
Recall that the level-$\ell$ mapping class group $\M_{g,p}^b(\ell)$ is the subgroup of $\M_{g,p}^b$ which acts trivially on $H_1(\Sigma_{g,p}^b;\mathbb{Z}/\ell)$. It has many similar properties to $\M_{g,p}^b$. For example:
\begin{prop}[\protect{\cite[Prop.~2.10]{AndyStable}}]\label{lBirman}
Fix some $g, p, b \ge 0$ such that $\pi_1(\Sigma_{g,p}^{b+1})$ is nonabelian, and let $\partial$ be a
boundary component of $\Sigma_{g,p}^{b+1}$. Let $\ell \ge 2$. Then there is a central extension
$$1 \to \mathbb{Z} \to \M^{b+1}_{g,p}(\ell) \to\M^b_{g,p+1}(\ell)\to 1,$$
where the central $\mathbb{Z}$ is generated by the Dehn twist $T_{\partial}$.
\end{prop}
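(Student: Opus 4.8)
The plan is to obtain this central extension by restricting the classical (non-level) Birman exact sequence to the level-$\ell$ subgroups. Recall that $\Sigma_{g,p+1}^b$ is obtained from $\Sigma_{g,p}^{b+1}$ by capping the boundary component $\partial$ with a once-punctured disk, and that this capping operation induces a surjection $\rho\colon\M_{g,p}^{b+1}\to\M_{g,p+1}^b$ whose kernel is the central cyclic subgroup $\langle T_\partial\rangle$; the hypothesis that $\pi_1(\Sigma_{g,p}^{b+1})$ is nonabelian is exactly what guarantees $\langle T_\partial\rangle\cong\mathbb{Z}$. This is the classical Birman exact sequence recalled in the discussion of the classes $e_i$ in the introduction (see also Farb--Margalit). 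So the real content is to check that intersecting this extension with $\M_{g,p}^{b+1}(\ell)$ again gives a short exact sequence with the same kernel $\langle T_\partial\rangle$.

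First I would isolate the homological input. Since the once-punctured disk capping $\partial$ deformation retracts onto $\partial$, the surface $\Sigma_{g,p+1}^b$ deformation retracts onto the subsurface $\Sigma_{g,p}^{b+1}$, so the inclusion $\iota\colon\Sigma_{g,p}^{b+1}\hookrightarrow\Sigma_{g,p+1}^b$ induces an isomorphism $\iota_*\colon H_1(\Sigma_{g,p}^{b+1};\mathbb{Z}/\ell)\xrightarrow{\ \cong\ }H_1(\Sigma_{g,p+1}^b;\mathbb{Z}/\ell)$. Next, for $\phi\in\M_{g,p}^{b+1}$ I would take the representative of $\rho(\phi)$ obtained by extending $\phi$ by the identity over the capping region; this representative restricts to $\phi$ on $\Sigma_{g,p}^{b+1}$, so $\rho(\phi)\circ\iota=\iota\circ\phi$ and hence $\rho(\phi)_*\circ\iota_*=\iota_*\circ\phi_*$ on $\mathbb{Z}/\ell$-homology. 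Because $\iota_*$ is an isomorphism, $\phi_*=\mathrm{id}$ if and only if $\rho(\phi)_*=\mathrm{id}$; that is,
$$\rho^{-1}\bigl(\M_{g,p+1}^b(\ell)\bigr)=\M_{g,p}^{b+1}(\ell).$$

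Finally I would assemble the pieces, which is now formal. The generator $T_\partial$ of $\ker\rho$ acts trivially already on $H_1(\Sigma_{g,p}^{b+1};\mathbb{Z})$, since the boundary-parallel curve $\partial$ can be isotoped off any given $1$-cycle, so $\langle T_\partial\rangle\subseteq\M_{g,p}^{b+1}(\ell)$. Restricting $\rho$ to $\M_{g,p}^{b+1}(\ell)=\rho^{-1}(\M_{g,p+1}^b(\ell))$ therefore gives an exact sequence
$$1\to\langle T_\partial\rangle\to\M_{g,p}^{b+1}(\ell)\xrightarrow{\ \rho\ }\M_{g,p+1}^b(\ell)\to1,$$
where surjectivity is immediate because $\rho$ is surjective and every lift of an element of $\M_{g,p+1}^b(\ell)$ already lies in $\M_{g,p}^{b+1}(\ell)$ by the displayed equality, and where the kernel is $\ker\rho\cap\M_{g,p}^{b+1}(\ell)=\langle T_\partial\rangle$. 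The subgroup $\langle T_\partial\rangle\cong\mathbb{Z}$ is central in $\M_{g,p}^{b+1}(\ell)$ because it is central in the ambient group $\M_{g,p}^{b+1}$. There is no serious obstacle here: the argument is a routine "restrict the Birman sequence to a finite-index subgroup," and the only point demanding care is the $\rho$-equivariance of $\iota_*$, which amounts to choosing the capping representative of $\rho(\phi)$ so that the capping homomorphism is strictly natural with respect to the subsurface inclusion.
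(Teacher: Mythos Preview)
Your argument is correct. The paper does not supply its own proof of this proposition: it is quoted from Putman \cite[Proposition~2.10]{AndyStable} and used as a black box. Your route---restrict the classical capping exact sequence $1\to\langle T_\partial\rangle\to\M_{g,p}^{b+1}\to\M_{g,p+1}^b\to1$ to the level-$\ell$ subgroups by showing $\rho^{-1}(\M_{g,p+1}^b(\ell))=\M_{g,p}^{b+1}(\ell)$ via the homotopy equivalence $\Sigma_{g,p}^{b+1}\hookrightarrow\Sigma_{g,p+1}^b$---is exactly the standard proof, and is in fact how Putman establishes it in \cite{AndyStable}. There is nothing to compare against in the present paper and no gap in your reasoning.
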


We also have the mod-$\ell$ version of the Birman exact sequence:

\begin{thm}[Mod-$l$ Birman exact sequence, Putman \protect{\cite[Theorem 2.8]{AndyStable}}]\label{Birman}
Fix $g,p,b\ge 0,\ell\ge 2$ such that $\pi_1(\Sigma_{g,p}^b)$ is non-abelian. Let $x_0$ be a puncture of $\Sigma_{g,p+1}^b$. There is a short exact sequence obtained by forgetting $x_0$:
$$1\to PP_{x_0}(\Sigma_{g,p}^b;\ell)\to \M_{g,p+1}^b(\ell)\to \M_{g,p}^b(\ell)\to 1,$$
where the level-$\ell$ point pushing group $PP_{x_0}(\Sigma_{g,p}^b;\ell)$ is as follows:
\begin{itemize}
\item If $p=b=0$, then $PP_{x_0}(\Sigma_{g,p}^b;\ell)=\pi_1(\Sigma_{g,p}^b,x_0)$.
\item If $p+b\ge 1$, then $PP_{x_0}(\Sigma_{g,p}^b;\ell)=\textup{Ker}(\pi_1(\Sigma_{g,p}^b,x_0)\to H_1(\Sigma_{g,p}^b;\mathbb{Z}/\ell)\to H_1(\Sigma_g;\mathbb{Z}/\ell))$.
\end{itemize}
\end{thm}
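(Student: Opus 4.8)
The plan is to obtain this mod-$\ell$ statement by restricting the classical Birman exact sequence
$$1 \longrightarrow \pi_1(\Sigma_{g,p}^b,x_0) \xrightarrow{\ \mathrm{Push}\ } \M_{g,p+1}^b \xrightarrow{\ \mathcal{F}\ } \M_{g,p}^b \longrightarrow 1,$$
where $\mathcal{F}$ forgets $x_0$ and $\mathrm{Push}$ is injective precisely because $\pi_1(\Sigma_{g,p}^b)$ is nonabelian, hence has trivial center. Restricting $\mathcal{F}$ to $\M_{g,p+1}^b(\ell)$ produces an exact sequence $1\to K\to\M_{g,p+1}^b(\ell)\xrightarrow{\mathcal F}\mathcal F(\M_{g,p+1}^b(\ell))\to 1$ with $K=\{\delta:\mathrm{Push}(\delta)\in\M_{g,p+1}^b(\ell)\}$, so it remains to prove (i) $\mathcal F(\M_{g,p+1}^b(\ell))=\M_{g,p}^b(\ell)$ and (ii) $K=PP_{x_0}(\Sigma_{g,p}^b;\ell)$. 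The inclusion $\mathcal F(\M_{g,p+1}^b(\ell))\subseteq\M_{g,p}^b(\ell)$ is immediate: filling in $x_0$ induces an $\mathcal F$-equivariant surjection $H_1(\Sigma_{g,p+1}^b;\mathbb Z/\ell)\twoheadrightarrow H_1(\Sigma_{g,p}^b;\mathbb Z/\ell)$, so acting trivially on the source forces acting trivially on the target.

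Both (ii) and the reverse inclusion in (i) rest on computing the action of $\mathrm{Push}(\delta)$ on $H_1(\Sigma_{g,p+1}^b;\mathbb Z)$. Write $\gamma_0=[\gamma_{x_0}]$ for the class of a small loop around $x_0$. For a simple loop $\delta$ one has $\mathrm{Push}(\delta)=T_{\delta_L}T_{\delta_R}^{-1}$, where $\delta_L,\delta_R,\gamma_{x_0}$ bound a pair of pants, so $[\delta_L]$ and $[\delta_R]$ differ by $\gamma_0$ while both reduce to the abelianized class $[\delta]$ after filling $x_0$; feeding this into the Picard–Lefschetz formula $T_c\colon v\mapsto v+\hat{\imath}(v,[c])[c]$ and using $\hat{\imath}(-,\gamma_0)=0$ collapses the two twists to
$$\mathrm{Push}(\delta)\colon v\longmapsto v+\hat{\imath}(\bar v,[\delta])\,\gamma_0,$$
with $\bar v$ the image of $v$ in $H_1(\Sigma_{g,p}^b)$ and $\hat{\imath}$ the algebraic intersection pairing; the general case follows by multiplicativity of $\mathrm{Push}$. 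Now, if $p=b=0$ then $\gamma_0=0$ in $H_1(\Sigma_{g,1};\mathbb Z)$, so $\mathrm{Push}(\delta)$ is trivial on $H_1(\Sigma_{g,1};\mathbb Z/\ell)$ for every $\delta$ and $K=\pi_1(\Sigma_g,x_0)$; if $p+b\ge 1$ then $\gamma_0$ is part of a $\mathbb Z/\ell$-basis, so $\mathrm{Push}(\delta)\in\M_{g,p+1}^b(\ell)$ iff $\hat{\imath}(\bar v,[\delta])\equiv0\pmod{\ell}$ for all $v$, i.e.\ iff the mod-$\ell$ class of $\delta$ lies in the radical of the intersection form on $H_1(\Sigma_{g,p}^b;\mathbb Z/\ell)$. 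Since a symplectic complement of the span of the peripheral loops carries the standard unimodular symplectic form, that radical is exactly the peripheral span, which equals $\ker\bigl(H_1(\Sigma_{g,p}^b;\mathbb Z/\ell)\to H_1(\Sigma_g;\mathbb Z/\ell)\bigr)$; hence $K=PP_{x_0}(\Sigma_{g,p}^b;\ell)$.

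For surjectivity in (i), take $\psi\in\M_{g,p}^b(\ell)$ and lift it to some $\tilde\psi_0\in\M_{g,p+1}^b$ via classical Birman. Because $\tilde\psi_0$ fixes the puncture $x_0$ it fixes $\gamma_0$, and because $\mathcal F(\tilde\psi_0)=\psi$ is level-$\ell$ it acts trivially on the quotient $H_1(\Sigma_{g,p}^b;\mathbb Z/\ell)$; thus $\tilde\psi_0-\mathrm{id}$ factors through a homomorphism $c\colon H_1(\Sigma_{g,p}^b;\mathbb Z/\ell)\to\langle\gamma_0\rangle$, i.e.\ a class in $H^1(\Sigma_{g,p}^b;\mathbb Z/\ell)$, and $\tilde\psi_0$ is level-$\ell$ iff $c=0$. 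The key point is that $c$ automatically vanishes on peripheral classes of $\Sigma_{g,p}^b$: such a class lifts to a loop around a puncture $x_i$ with $i\ge1$ or around a boundary component of $\Sigma_{g,p+1}^b$, which $\tilde\psi_0$ fixes and hence fixes in homology. Therefore $c$ annihilates the radical of the intersection form, which by the previous paragraph is exactly the image of $H_1(\Sigma_{g,p}^b;\mathbb Z/\ell)\xrightarrow{\hat{\imath}}H^1(\Sigma_{g,p}^b;\mathbb Z/\ell)$ (onto the annihilator of the radical, since the induced form on the quotient is perfect). Choosing $\delta\in\pi_1(\Sigma_{g,p}^b,x_0)$ with $\hat{\imath}(-,[\delta])=-c$ — possible since $\pi_1$ surjects onto $H_1(\Sigma_{g,p}^b;\mathbb Z/\ell)$ — the composite $\mathrm{Push}(\delta)\tilde\psi_0$ has vanishing $c$-invariant (these unipotent contributions add), hence lies in $\M_{g,p+1}^b(\ell)$, and it still projects to $\psi$; when $p=b=0$, $\gamma_0=0$ forces $c=0$ and $\tilde\psi_0$ itself works. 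I expect the homology formula for $\mathrm{Push}(\delta)$ together with the identification of the radical of the mod-$\ell$ intersection form with the peripheral span (equivalently, with $\ker(H_1(\Sigma_{g,p}^b;\mathbb Z/\ell)\to H_1(\Sigma_g;\mathbb Z/\ell))$) to be the main technical work; granting these, the observation that the defect $c$ of an arbitrary lift already kills peripheral classes is exactly what makes surjectivity go through.
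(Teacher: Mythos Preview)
Your argument is correct and self-contained. The paper does not actually prove this statement: it is quoted in the preliminaries as a result of Putman \cite{AndyStable} and used as a black box, so there is no ``paper's own proof'' to compare against. What you have written is essentially the standard direct proof one would give, and it matches the approach in Putman's original paper: restrict the classical Birman sequence, compute the action of $\mathrm{Push}(\delta)$ on $H_1(\Sigma_{g,p+1}^b;\mathbb Z/\ell)$ via Picard--Lefschetz, identify the kernel with the preimage of the radical of the mod-$\ell$ intersection form, and correct an arbitrary lift by a point-push to obtain surjectivity. The only cosmetic point is the sentence ``which by the previous paragraph is exactly the image of $H_1\xrightarrow{\hat\imath}H^1$'': the antecedent of ``which'' reads at first as ``the radical'' rather than ``the annihilator of the radical,'' though your parenthetical makes the intended meaning clear.
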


A natural question is whether the finite-index subgroup $\M_{g,p}^b(\ell)$ of $\M_{g,p}^b$ has the same stable cohomology as $\M_{g,p}^b$. The answer is negative for integral cohomology, as exotic torsion elements in $H^1(\M_{g,p}^b(\ell);\mathbb{Z})$ have been discovered by Perron \cite{perron}, Sato \cite{sato}, and Putman \cite{AndyPicard}. These elements do not arise from $H^1(\M_{g,p}^b;\mathbb{Z})$, and they are not stable. 

However, Putman established that the result does hold for rational cohomology: 
\begin{thm}[Putman, \protect{\cite[Theorem A]{AndyStable}}]\label{andy0}
We have
$$H^k(\M_{g,p}^b(\ell);\Q) \cong H^k(\M_{g,p}^b; \Q) \text{ if } g\ge 2k^2 + 7k + 2.$$ 
\end{thm}

Putman also studied the twisted cohomology of $\M_{g,p}^b(\ell)$ with coefficients in tensor powers of $H^1(\Sigma_{g,p}^b;\Q)$ and showed this cohomology is stable by the following theorem:
\begin{thm}[Putman \protect{\cite[Theorem B]{AndyStable}}] The following map is an isomorphism:
$$H^k(\M_{g,p}^b;H^1(\Sigma_{g,p}^b;\Q)^{\otimes r})\to H^k(\M_{g,p}^b(\ell);H^1(\Sigma_{g,p}^b;\Q)^{\otimes r}),$$
when $g \ge 2(k + r)^2 + 7k + 6r + 2$.
\end{thm}

\subsection{Group cohomology} Here we review some useful theorems about group cohomology. We will apply these theorems in later sections.

Consider a group $G$ and a finite-index subgroup $H$, along with a $\mathbb{Z}[G]$-module $M$. There is a natural map $\textup{Res}_H^G: H^k(G;M)\to H^k(H;M)$ in group cohomology obtained from the inclusion $\mathbb{Z}[H]\hookrightarrow\mathbb{Z}[G]$. We call it the restriction map. There is a "wrong-way" map, called the transfer map, $\text{cor}_H^G:H^k(H;M)\to H^k(G;M)$ which satisfies:

\begin{prop}[\protect{\cite[Prop.~9.5]{brown}}]\label{transfer}
If $H$ is a finite index subgroup of $G$ with index $[G:H]$, then the composition of transfer maps and restriction maps is the multiplication map by $[G:H]$, i.e. $\text{cor}_H^G\cdot \textup{Res}_H^G=[G:H]\textup{id}$.
\end{prop}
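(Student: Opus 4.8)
The plan is to prove the identity directly at the level of cochains. First I would fix a projective resolution $F_\bullet\to\mathbb{Z}$ of the trivial module over $\mathbb{Z}[G]$. Because $[G:H]<\infty$, the ring $\mathbb{Z}[G]$ is a free $\mathbb{Z}[H]$-module of rank $[G:H]$, so every $F_i$ is also projective over $\mathbb{Z}[H]$; hence the single complex $F_\bullet$ computes both $H^\bullet(G;M)=H^\bullet(\mathrm{Hom}_{\mathbb{Z}[G]}(F_\bullet,M))$ and $H^\bullet(H;M)=H^\bullet(\mathrm{Hom}_{\mathbb{Z}[H]}(F_\bullet,M))$, and $\textup{Res}_H^G$ is induced by the inclusion of cochain complexes $\mathrm{Hom}_{\mathbb{Z}[G]}(F_\bullet,M)\hookrightarrow\mathrm{Hom}_{\mathbb{Z}[H]}(F_\bullet,M)$.

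Next I would set up the transfer at the cochain level. Choose coset representatives $g_1,\dots,g_n$ with $G=\bigsqcup_{i=1}^{n}g_iH$ and $n=[G:H]$, and for $\varphi\in\mathrm{Hom}_{\mathbb{Z}[H]}(F_\bullet,M)$ define
$$(\mathrm{tr}\,\varphi)(x)=\sum_{i=1}^{n} g_i\cdot\varphi(g_i^{-1}x).$$
Two routine verifications are needed. First, $\mathrm{tr}\,\varphi$ is independent of the chosen representatives, since replacing $g_i$ by $g_ih$ with $h\in H$ gives $g_ih\cdot\varphi(h^{-1}g_i^{-1}x)=g_i\cdot\varphi(g_i^{-1}x)$ by $H$-equivariance of $\varphi$. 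Second, $\mathrm{tr}\,\varphi$ is $G$-equivariant, because for $g\in G$ the elements $gg_1,\dots,gg_n$ again form a complete set of coset representatives, whence $(\mathrm{tr}\,\varphi)(gx)=g\cdot(\mathrm{tr}\,\varphi)(x)$. Since $\mathrm{tr}$ commutes with the coboundary maps, it induces $\text{cor}_H^G\colon H^\bullet(H;M)\to H^\bullet(G;M)$; I would take this cochain formula as the working definition of the transfer, noting that it agrees with the one in \cite{brown}.

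Finally I would compute the composite. If $\varphi\in\mathrm{Hom}_{\mathbb{Z}[G]}(F_\bullet,M)$, i.e.\ $\varphi$ is already $G$-equivariant, then
$$(\mathrm{tr}\,\varphi)(x)=\sum_{i=1}^{n}g_i\cdot\varphi(g_i^{-1}x)=\sum_{i=1}^{n}g_ig_i^{-1}\cdot\varphi(x)=n\,\varphi(x),$$
so $\mathrm{tr}\circ\textup{Res}=n\cdot\textup{id}$ on $\mathrm{Hom}_{\mathbb{Z}[G]}(F_\bullet,M)$, and passing to cohomology gives $\text{cor}_H^G\cdot\textup{Res}_H^G=[G:H]\,\textup{id}$. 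There is no genuine obstacle here: the only care needed is in checking well-definedness and $G$-equivariance of $\mathrm{tr}$, and, if a different definition of the transfer is used downstream, in reconciling it with the explicit cochain formula above.
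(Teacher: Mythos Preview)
Your proof is correct and is essentially the standard argument from Brown's book. The paper itself does not prove this statement; it simply cites \cite[Proposition~9.5]{brown} as a preliminary fact, so there is no ``paper's own proof'' to compare against beyond the reference you have effectively reproduced.
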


\begin{remark} In particular, supposing $H$ is a finite-index subgroup of $G$, if $M$ is a $\Q$(or $\mathbb{R}$)-vector space, we see that $\text{cor}_H^G$ is surjective and $\textup{Res}_H^G$ is injective.
\end{remark}

\begin{prop}[\protect{\cite[Prop.~1.1]{note}}]\label{quo}
Let $X$ be a simplicial complex and $G$ be a finite group acting
simplicially on X. Then for all $k \ge 0$, we have
$$H^k(X/G;\Q) \cong (H^k(X;\Q))^G.$$
\end{prop}

The next technique we will use a lot is the Gysin sequence, which is a long exact sequence obtained from the second page of  the Hochschild-Serre spectral sequence (\cite[prop.~7]{serre}) associated to a central extension of groups:

\begin{prop}[Gysin Sequence \protect{\cite[prop.~7]{serre}}]\label{gy}
Consider a central extension
$$1\to \mathbb{Z}\to G\to K\to 1$$
and a $\mathbb{Z}[K]$ module $M$ (thus $M$ is also a  $\mathbb{Z}[G]$ module through the map $G\to K$). We have the following long exact sequence:
$$\begin{aligned} \cdots \to H^{k-2}(K;M)\to H^k(K;M)\to H^k(G;M)\to H^{k-1}(K;M)\to H^{k+1}(K;M)\to \cdots \end{aligned},$$
where $H^{k-2}(K;M)\to H^k(K;M)$ is the differential on the $E_2$-page of the Hochschild-Serre spectral sequence.
\end{prop}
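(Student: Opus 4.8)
\textbf{Proof proposal for the Gysin Sequence (Proposition \ref{gy}).}

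The plan is to extract the long exact sequence from the Hochschild--Serre spectral sequence of the central extension $1\to\mathbb{Z}\to G\to K\to 1$. First I would set up this spectral sequence with coefficients in the $\mathbb{Z}[G]$-module $M$ (where $M$ is inflated from a $\mathbb{Z}[K]$-module): it has $E_2$-page $E_2^{p,q}=H^p(K;H^q(\mathbb{Z};M))$ and converges to $H^{p+q}(G;M)$. Since $\mathbb{Z}$ acts trivially on $M$ (because $M$ comes from $K$, and the central $\mathbb{Z}$ maps to the identity in $K$), we have $H^0(\mathbb{Z};M)=M$ and $H^1(\mathbb{Z};M)=M$, with $H^q(\mathbb{Z};M)=0$ for $q\ge 2$ because $\mathbb{Z}$ has cohomological dimension $1$. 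Thus the spectral sequence is concentrated in the two rows $q=0$ and $q=1$, with $E_2^{p,0}=E_2^{p,1}=H^p(K;M)$.

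The key step is that a spectral sequence supported in two adjacent rows collapses after the $E_2$-page (the differential $d_2\colon E_2^{p,1}\to E_2^{p+2,0}$ is the only possibly nonzero differential, since all higher $d_r$ with $r\ge 3$ necessarily land in or originate from a zero row), so $E_3=E_\infty$. One then assembles the long exact sequence from three standard pieces: (i) the exact sequence $0\to E_\infty^{p,1}\to H^{p+1}(G;M)\to E_\infty^{p+1,0}\to 0$ coming from the two-step filtration on $H^{p+1}(G;M)$; (ii) the identification $E_\infty^{p+2,0}=\operatorname{coker}(d_2\colon E_2^{p,1}\to E_2^{p+2,0})$ and $E_\infty^{p,1}=\ker(d_2\colon E_2^{p,1}\to E_2^{p+2,0})$; and (iii) splicing these together, where the connecting map $H^k(G;M)\to H^{k-1}(\mathbb{Z};M)^K=H^{k-1}(K;M)$ is the composite $H^k(G;M)\twoheadrightarrow E_\infty^{k-1,1}\hookrightarrow E_2^{k-1,1}=H^{k-1}(K;M)$ followed by the next map being $d_2$. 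Concretely, the resulting sequence reads
$$\cdots\to H^{k-2}(G;M)\xrightarrow{\ d_2\ } H^k(K;M)\xrightarrow{\ \mathrm{infl}\ } H^k(G;M)\xrightarrow{\ \mathrm{edge}\ } H^{k-1}(K;M)\xrightarrow{\ d_2\ } H^{k+1}(K;M)\to\cdots,$$
matching the statement once we note that the "$H$" appearing there stands for $K$ in rows $q=1$ where $E_2^{p,1}=H^p(K;M)$.

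I expect the main obstacle to be purely bookkeeping: carefully matching the $d_2$ differential with the map $H^{k-2}(G;M)\to H^k(K;M)$ asserted in the statement, and checking that the filtration quotients splice together into a single exact sequence with the correct connecting homomorphisms. This is the standard argument deriving the Wang/Gysin sequence from a two-row spectral sequence, so no genuine difficulty arises; the only point requiring care is verifying the cohomological dimension vanishing $H^{\ge 2}(\mathbb{Z};M)=0$ and the triviality of the $\mathbb{Z}$-action on $M$, both of which are immediate from the hypotheses. Finally, I would remark that the same argument applies verbatim with $\mathbb{Z}$ replaced by any group of cohomological dimension $1$ acting trivially on $M$, which is the only property of the central $\mathbb{Z}$ that we use.
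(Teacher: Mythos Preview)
Your proposal is correct and follows exactly the approach the paper indicates: the paper does not give a proof but simply states that the Gysin sequence ``can be deduced from the Hochschild--Serre spectral sequence,'' and you have supplied precisely that deduction via the standard two-row collapse argument. Note only that the statement in the paper contains typos (the first term should be $H^{k-2}(K;M)$ rather than $H^{k-2}(G;M)$, and the ``$H$''s should be ``$K$''s), which you have implicitly corrected in your derivation.
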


\begin{remark} The geometric version of the Gysin sequence is that, for an oriented sphere bundle $S^d \hookrightarrow E \to M$, we have the following long exact sequence
$$ \cdots \to H^{k-d-1}(M)\to H^{k}(M) \to H^k (E) \to H^{k-d}(M)\to H^{k+1}(M)\to \cdots,$$
where the map $H^{k-d-1}(M)\to H^{k}(M)$ is the wedge product with the Euler class, and the map $H^k (E) \to H^{k-d}(M)$ is fiberwise integration.
\end{remark}

There is a generalization of the Gysin sequence, called the Thom-Gysin Sequence. This sequence is widely known, appearing in \cite[eq.~1.16]{yang} and \cite[p.~167]{MFK} for example.

\begin{prop}[Thom-Gysin Sequence \protect{\cite[eq.~1.16]{yang}}]\label{tg}
Let $X$ be a smooth complex variety, and let $Y$ be an open smooth subvariety of $X$ whose complement $X\setminus Y$ has constant (real) codimension $d$. We then have the following long exact sequence:
$$\cdots \to H^{k-d}(X\setminus Y;\Q)\to H^k(X;\Q)\to H^k(Y;\Q)\to H^{k-d+1}(X\setminus Y;\Q) \to H^{k+1}(X;\Q)\to \cdots.$$
\end{prop}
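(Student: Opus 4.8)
\textbf{Proof proposal for the Thom--Gysin sequence (Proposition \ref{tg}).}

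The plan is to combine the Thom isomorphism for the (oriented) normal bundle of the closed stratum $Z \defeq X\setminus Y$ inside $X$ with the long exact sequence of the pair $(X, Y)$. First I would recall the long exact sequence in cohomology of the pair $(X,Y)$ with coefficients in the commutative ring $R$:
$$\cdots \to H^k(X,Y;R)\to H^k(X;R)\to H^k(Y;R)\to H^{k+1}(X,Y;R)\to \cdots.$$
This is entirely formal and requires no hypotheses on $X$ or $Y$. The content of the proposition is then the identification
$$H^k(X,Y;R)\cong H^{k-d}(Z;R),$$
where $Z = X\setminus Y$ has real codimension $d$ in $X$; substituting this identification into the pair sequence and reindexing gives exactly the stated long exact sequence, with the map $H^{k-d}(Z;R)\to H^k(X;R)$ being the Gysin pushforward and $H^k(X;R)\to H^k(Y;R)$ the restriction.

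The identification $H^k(X,Y;R)\cong H^{k-d}(Z;R)$ is the Thom isomorphism. To set it up I would first reduce to the case where $Z$ is smooth (or, in the orbifold setting, has at worst quotient singularities): since the proposition is applied to complex varieties $X$ with $Z$ a closed complex subvariety, $Z$ is a complex submanifold away from a further closed subvariety of strictly higher complex codimension, and a dimension count together with the pair sequence lets one assemble the general case from the smooth case by stratifying $Z$ — alternatively, since all later applications in the paper have $Z$ smooth (a union of diagonals or fixed-point loci, each a smooth subvariety), I would simply state the proposition under the running hypothesis that $Z$ is a smooth closed complex subvariety, so that it has a tubular neighborhood $N$ in $X$ with $N$ diffeomorphic to the total space of the normal bundle $\nu$ of $Z$, a real oriented rank-$d$ bundle (oriented because $\nu$ carries a complex structure, $d = 2\,\mathrm{codim}_{\mathbb C} Z$). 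By excision, $H^k(X,Y;R)\cong H^k(N, N\setminus Z;R)\cong H^k(\nu,\nu\setminus Z;R)$, and the Thom isomorphism theorem for the oriented bundle $\nu$ (valid over any commutative ring $R$, using the Thom class determined by the orientation) gives $H^k(\nu,\nu\setminus Z;R)\cong H^{k-d}(Z;R)$ via cup product with the Thom class. Chasing the Thom class through these identifications shows that the composite $H^{k-d}(Z;R)\to H^k(X,Y;R)\to H^k(X;R)$ is cup product with (the image of) the Thom class, i.e. the Gysin map, as claimed in the remark preceding the proposition.

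The main obstacle is the existence of a tubular neighborhood and the smoothness of $Z$: in the generality of arbitrary complex varieties $X$ one does not literally have a vector-bundle tubular neighborhood, and $Z$ may be singular. The cleanest fix — and the one I expect to use — is to restrict attention to the situation actually needed, namely $X$ a smooth (quasi-projective) variety or orbifold and $Z\subset X$ a smooth closed subvariety (resp. suborbifold), where the tubular neighborhood theorem applies verbatim (for orbifolds one works equivariantly on local charts, using that $R\supseteq\mathbb Q$ so that finite-group quotients behave well, exactly as in the remark after Deligne's degeneration theorem). For the handful of places where $Z$ fails to be globally smooth, I would note that $Z$ admits a finite filtration by closed subvarieties whose successive differences are smooth, apply the smooth case to each stratum, and patch via the long exact sequences of the corresponding pairs; the codimension-$d$ hypothesis ensures the degree shifts line up. With $Z$ smooth, everything else — excision, the long exact sequence of a pair, the Thom isomorphism over $R$ — is standard and purely formal, so no real computation is involved.
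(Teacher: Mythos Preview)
Your approach is correct and matches what the paper indicates: the paper does not actually prove this proposition but cites it from \cite{yang}, noting only that it ``is derived from the Thom Isomorphism Theorem and the long exact sequence of relative cohomology,'' which is precisely the strategy you outline. Your additional care about smoothness of $Z$ and the orbifold/stratification issues goes beyond what the paper says, but is appropriate for the applications made later.
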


\begin{remark}
When $X$ is the total space of a sphere bundle and $X\setminus Y$ is the zero section, the associated Thom-Gysin sequence is exactly the Gysin sequence. In general, one can derive the Thom-Gysin sequence by applying the Thom Isomorphism Theorem and the Excision Theorem to the tubular neighborhood of $X\setminus Y$ in $X$.
\end{remark}

\begin{remark}
In most of our cases in Section 4, we apply the Thom-Gysin sequence to quasi-projective varieties, such as $\mathcal{M}_{g,r}(\ell)$ for $\ell \ge 3$. The case of $\mathcal{M}_{g,r}(2)$ is the only instance where we use the Thom-Gysin sequence on a quasi-projective orbifold, which is a quotient of a quasi-projective variety by a finite group. In this case, by Proposition \ref{quo}, the Thom-Gysin sequence retains the same form, so we proceed without additional markings.
\end{remark}

\subsection{Deligne's degeneration theorem} Our computations will be based on an important theorem due to Deligne (\cite[Sec.~8.1]{deligne}), which was proved using mixed Hodge theory. Here we quote the version in Griffiths and Schimid's survey \cite[p.~42]{mixed}:
\begin{thm}[Deligne's degeneration theorem \protect{\cite[Sec.~8.1]{deligne}}, \protect{\cite[p.~42]{mixed}}]\label{ever}
Let $E$ be a K\"ahler manifold, $X$ a complex manifold, and $f:E\to X$ a smooth, proper holomorphic mapping, which implies $f$ is a differential fiber bundle whose fibers $X_b,b\in B$ are compact K\"ahler manifolds. The corresponding Leray spectral sequence 
$$E_2^{p,q}=H^p(B,R^q f_*(\Q))\Rightarrow H^{p+q}(E;\Q),$$ 
degenerates at the second page, i.e. $E_2=E_g$. Here $$R^q f_*(\Q) \text{ comes from the presheaf }  U\mapsto H^q(f^{-1}(U);\Q).$$
\end{thm}

\begin{remark} Smooth quasi-projective varieties are K\"ahler manifolds, so we can apply Deligne's degeneration theorem to $\mathcal{C}_{g,r}(\ell)\to \mathcal{M}_{g,1}(\ell)$ for $\ell\ge 3$. This covers nearly all cases in Section 5, though there are two cases where we apply Deligne's degeneration theorem to quasi-projective orbifolds: for $\mathcal{C}_{g,r}\to \mathcal{M}_{g,1}$ in Section 3 and for $\mathcal{C}_{g,r}(2)\to \mathcal{M}_{g,1}(2)$ in Section 5. Deligne's degeneration theorem still applies in these cases, as demonstrated by the following argument:

Let $E$ be a quasi-projective orbifold and $X$ a complex orbifold. Suppose $f:E\to X$ is a map of orbifolds that lifts to a map $\widetilde{f}:E'\to X'$ with the following properties:
\begin{itemize}
\item $E'$ and $B'$ are quasi-projective varieties.
\item $E'\to E$ and $B'\to B$ are finite branched $G$-covers for a finite group $G$.
\item $\widetilde{f}:E'\to X'$ is a smooth, proper holomorphic mapping.
\end{itemize}
By applying Deligne's degeneration theorem \ref{ever} to $\widetilde{f}:E'\to X'$, we obtain the Leray spectral sequence
\begin{equation}\label{e2}
E_2^{p,q}=H^p(B',R^q \widetilde{f}_*(\Q))\Rightarrow H^{p+q}(E';\Q),\end{equation}
which degenerates at the second page.

By Proposition \ref{quo}, we have
$$H^{p+q}(E;\Q)\cong H^{p+q}(E';\Q)^G,$$
and by a similar argument, we obtain
$$H^p(B,R^q f_*(\Q))\defeq H^p(B',R^q \widetilde{f}_*(\Q))^G.$$
Restricting the spectral sequence (\ref{e2}) to $G$-invariants, we conclude that the Leray spectral sequence
$$E_2^{p,q}=H^p(B,R^q f_*(\Q))\Rightarrow H^{p+q}(E;\Q)$$ 
also degenerates at the second page.
\end{remark}

\subsection{Mixed Hodge theory} Mixed Hodge theory is used in the proof of Deligne's degeneration theorem \ref{ever}, and is also a powerful tool for determining terms in spectral sequences. We will introduce some basic properties according to the survey \cite{mixed} by Griffiths and Schimid. First, we start with definitions of pure Hodge structures.

\begin{defn}[\protect{\cite[Definition 1.1, 1.2]{mixed}}]
Let $H_{\mathbb{R}}$ be a finite dimensional real vector space, and $H_{\mathbb{Z}}$ be a lattice in $H_{\mathbb{R}}$. Let $H=H_{\mathbb{R}}\otimes_{\mathbb{R}}\mathbb{C}$ be its complexification.
\begin{enumerate}
\item A \textbf{Hodge structure of weight $m$} on $H$ consists of a direct sum decomposition
$$H=\bigoplus_{p+q=m} H^{p,q}, \text{ with } H^{q,p}=\bar{H}^{p,q},$$
where $\bar{H}^{p,q}$ denotes the complex conjugate of $H^{p,q}$.
\item A \textbf{morphism of Hodge structures of type $(r,r)$} is a linear map (defined over $\Q$ relative to the lattices $H_{\mathbb{Z}}$, $H'_{\mathbb{Z}}$) 
$$\varphi: H\to H', \text{ with } \varphi(H^{p,q})\subset (H')^{p+r,q+r}.$$
\item A Hodge structure $H$ of weight $m$ is \textbf{polarized} by a non-degenerate integer bilinear form $Q$ on $H_\mathbb{Z}$ if   the extended bilinear form $Q$ on $H$ satisfies the following conditions
\begin{align*}Q(v,w)=(-1)^m Q(w,v), \forall  v,w\in H,\\
Q(H^{p,q},H^{p',q'})=0, \text{ unless } p=q', q=p',\\
\sqrt{-1}^{p-q} Q(v,\bar{v}) >0, \text{ for } v\in H^{p,q}, v\neq 0.
\end{align*}
\end{enumerate}
\end{defn}

\begin{remark} Let $H$ be a Hodge structure of weight $m$ and $H'$ be a Hodge structure of weight $m'$. The tensor product $H\otimes H'$ inherits a Hodge structure of weight $m+m'$: $$H\otimes H'=\sum\limits_{p+q=m+m'} H^{p,m-p}\otimes (H')^{q,m'-q}.$$ Moreover, if $H$ is polarized by $Q$ and $H'$ is polarized by $Q'$, then $H\otimes H'$ is polarized by the induced bilinear form $Q\otimes Q'$.
\end{remark}

A Hodge structure can also be given as follows:

\begin{prop}[\protect{\cite[p.~35]{mixed}}]
Let $H, H_{\mathbb{R}}, H_{\mathbb{Z}}$ be the same as above.
\begin{enumerate}
\item There is a Hodge structure of weight $m$ on $H$ if and only if $H$ has a \textbf{Hodge filtration}
$$ H\supset\cdots \supset F^{p-1}\supset F^{p} \supset F^{p+1}\supset \cdots \supset 0, $$
$$ \text{with } F^p\oplus \bar{F}^{m-p+1} \xrightarrow{\cong}H, \text{ for all }p.$$
\item A map $\varphi: H\to H'$ is a morphism of Hodge structures of type $(r,r)$ if and only if $\varphi$ preserves the Hodge filtration with a shift by $r$, i.e.
$$\varphi(F^p)\subset (F')^{p+r}, \text{ for all } p.$$
In particular, a morphism of Hodge structures of type $(r,r)$ preserves the Hodge filtration strictly:
$$\varphi(F^p)=(F')^{p+r}\cap \text{Im}(\varphi), \text{ for all }p.$$ 
\end{enumerate}
\end{prop}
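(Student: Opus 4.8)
\emph{Plan.} The statement is a piece of linear algebra, and the plan is to write down the two constructions that implement the correspondence and check that each produces the structure the other requires, then do the same for morphisms. Starting from a Hodge structure $H=\bigoplus_{p+q=m}H^{p,q}$ with $H^{q,p}=\overline{H^{p,q}}$, I would define the candidate Hodge filtration by
$$F^p\defeq\bigoplus_{s\ge p}H^{s,\,m-s},$$
which is automatically a decreasing, exhaustive and separated filtration. The defining property $F^p\oplus\overline{F^{m-p+1}}\xrightarrow{\cong}H$ is then a bookkeeping computation: conjugating term by term and reindexing by $t=m-s$ gives $\overline{F^{m-p+1}}=\bigoplus_{t\le p-1}H^{t,\,m-t}$, so $F^p$ and $\overline{F^{m-p+1}}$ together exhaust all Hodge pieces with no overlap. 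This handles the implication ``Hodge structure $\Rightarrow$ Hodge filtration.''

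For the converse, given a filtration $F^\bullet$ with $F^p\oplus\overline{F^{m-p+1}}\xrightarrow{\cong}H$ for all $p$, I would set $H^{p,q}\defeq F^p\cap\overline{F^q}$ for $p+q=m$. The symmetry $H^{q,p}=\overline{H^{p,q}}$ is immediate from $\overline{F^p\cap\overline{F^q}}=\overline{F^p}\cap F^q$. The real content is the direct-sum decomposition, which I would get from the auxiliary identity $F^p=\bigoplus_{s\ge p}H^{s,\,m-s}$, proved by downward induction on $p$ (the base case being $p\gg0$, where $F^p=0$). The inductive step is to show $F^p=H^{p,m-p}\oplus F^{p+1}$: the intersection $H^{p,m-p}\cap F^{p+1}$ lies in $F^{p+1}\cap\overline{F^{m-p}}$, which vanishes by the decomposition hypothesis applied at the \emph{shifted} index $p+1$; and given $v\in F^p$, writing $v=v_1+v_2$ with $v_1\in F^{p+1}$ and $v_2\in\overline{F^{m-p}}$ via that same hypothesis forces $v_2=v-v_1\in F^p\cap\overline{F^{m-p}}=H^{p,m-p}$. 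Evaluating the identity at $p\ll0$, where $F^p=H$, yields $H=\bigoplus_{p+q=m}H^{p,q}$. Finally I would note that the two constructions are mutually inverse: from $F^p=\bigoplus_{s\ge p}H^{s,m-s}$ and $\overline{F^{m-p}}=\bigoplus_{s\le p}H^{s,m-s}$ one reads off $F^p\cap\overline{F^{m-p}}=H^{p,m-p}$, and the auxiliary identity recovers $F^\bullet$ from the $H^{p,q}$.

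For part (2), in the forward direction I would apply $\varphi$ to $F^p=\bigoplus_{s\ge p}H^{s,m-s}$ and use $\varphi(H^{s,m-s})\subseteq (H')^{s+r,\,m'-s-r}$ (with $m'=m+2r$) to conclude $\varphi(F^p)\subseteq(F')^{p+r}$. For the converse I would use that $\varphi$ is defined over $\Q$, hence commutes with complex conjugation, so $\varphi(\overline{F^q})=\overline{\varphi(F^q)}\subseteq\overline{(F')^{q+r}}$; together with $\varphi(F^p)\subseteq(F')^{p+r}$ this gives $\varphi(H^{p,q})=\varphi(F^p\cap\overline{F^q})\subseteq(F')^{p+r}\cap\overline{(F')^{q+r}}=(H')^{p+r,q+r}$. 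Strictness $\varphi(F^p)=(F')^{p+r}\cap\mathrm{Im}(\varphi)$: the inclusion $\subseteq$ is clear, and for $\supseteq$ take $v\in H$ with $\varphi(v)\in(F')^{p+r}$, decompose $v=\sum_s v_s$ into Hodge components; since $\varphi(v_s)\in(H')^{s+r,m'-s-r}$ and the decomposition of $H'$ is direct, all components with $s<p$ must vanish, so $\varphi(v)=\varphi\bigl(\sum_{s\ge p}v_s\bigr)$ with $\sum_{s\ge p}v_s\in F^p$.

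\emph{Main obstacle.} There is no genuine difficulty here — the whole proof is formal — but the one point that has to be handled with care is the indexing: one must keep straight the constraint $p+q=m$, the conjugation $\overline{F^q}$, and the weight shift $m'=m+2r$, and in the inductive step of part (1) the decomposition hypothesis must be invoked at the shifted level $p+1$ (i.e. $F^{p+1}\oplus\overline{F^{m-p}}=H$) rather than at level $p$; the other essential input, used only in the converse half of part (2), is that $\varphi$ respects the real (equivalently rational) structure, without which $\varphi$ need not commute with conjugation.
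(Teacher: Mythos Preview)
Your proof is correct and is the standard argument. The paper does not actually prove this proposition: it is stated as a background fact with a citation to the Griffiths--Schmid survey \cite{mixed}, so there is nothing to compare against.
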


A mixed Hodge structure is a generalization of a Hodge structure.
\begin{defn}[\protect{\cite[Definition 1.11]{mixed}}]
Let $H_{\mathbb{Z}}$ be a finitely generated free abelian group.
\begin{enumerate}
\item A \textbf{mixed Hodge structure} is a triple $(H_{\mathbb{Z}},W_\bullet,F^\bullet)$ such that
\begin{enumerate}
\item The weight filtration $W_\bullet$ is 
$$0\subset \cdots \subset W_{m-1} \subset W_m \subset W_{m+1}\subset \cdots \subset H_{\mathbb{Z}}\otimes_{\mathbb{Z}} \Q=H_{\Q}.$$
\item The Hodge filtration $F^\bullet$ is
$$ H=H_{\mathbb{Z}}\otimes_{\mathbb{Z}}\mathbb{C} \supset \cdots \supset F^{p-1}\supset F^p\supset F^{p+1} \supset \cdots \supset 0.$$
\item For each $m\in \mathbb{Z}$, on the graded piece $\text{Gr}_m(W_\bullet)=W_{m}/W_{m-1}$, the induced filtration by $F^\bullet$ defines a Hodge structure of weight $m$.
\end{enumerate}
\item A \textbf{morphism of mixed Hodge structures} of type $(r,r)$ consists of a linear map
$$\varphi: H_{\Q}\to (H')_{\Q} \text{ with } \varphi(W_m)\subset (W')_{m+2r}, \text{ and }\varphi(F^p)\subset (F')^{p+r}.$$
\end{enumerate}
\end{defn}

The morphisms of mixed Hodge structures are also strict in the following sense.

\begin{lem}[\protect{\cite[Lemma 1.13]{mixed}}]
A morphism of type $(r,r)$ between mixed Hodge structures is strict with respect to both the weight and Hodge filtrations. More precisely, 
$$\varphi(W_m)=(W')_{m+2r}\cap \text{Im}(\varphi), \ \varphi(F^p)=(F')^{p+r}\cap \text{Im}(\varphi).$$
\end{lem}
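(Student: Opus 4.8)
The plan is to reduce the statement to the strictness of morphisms of \emph{pure} Hodge structures --- which is the Proposition stated just before the definition of mixed Hodge structures --- by exhibiting a functorial splitting of the two filtrations. Concretely, I would invoke Deligne's canonical bigrading: for every mixed Hodge structure $(H,W_\bullet,F^\bullet)$ there is a direct-sum decomposition $H=\bigoplus_{p,q}I^{p,q}$ with $W_m=\bigoplus_{p+q\le m}I^{p,q}$ and $F^p=\bigoplus_{p'\ge p}\bigoplus_q I^{p',q}$, and this decomposition is natural in the sense that any morphism $\varphi\colon H\to H'$ of mixed Hodge structures of type $(r,r)$ satisfies $\varphi(I^{p,q})\subseteq(I')^{p+r,q+r}$.

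Granting the bigrading, strictness drops out formally. Given $y\in H$, write $y=\sum_{p,q}y_{p,q}$ with $y_{p,q}\in I^{p,q}$; then $\varphi(y)=\sum_{p,q}\varphi(y_{p,q})$ with $\varphi(y_{p,q})\in(I')^{p+r,q+r}$. If $\varphi(y)\in W'_{m+2r}=\bigoplus_{p'+q'\le m+2r}(I')^{p',q'}$, then, since the sum defining $H'$ is direct, every component $\varphi(y_{p,q})$ with $(p+r)+(q+r)>m+2r$, i.e. $p+q>m$, vanishes; hence $\varphi(y)=\varphi\big(\sum_{p+q\le m}y_{p,q}\big)$ and $\sum_{p+q\le m}y_{p,q}\in W_m$, so $\varphi(y)\in\varphi(W_m)$. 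This gives $W'_{m+2r}\cap\text{Im}(\varphi)\subseteq\varphi(W_m)$, and the opposite inclusion is precisely the hypothesis that $\varphi$ is a morphism of mixed Hodge structures; thus $\varphi(W_m)=W'_{m+2r}\cap\text{Im}(\varphi)$. Running the identical argument with the condition $p'\ge p+r$ in place of $p'+q'\le m+2r$ yields $\varphi(F^p)=(F')^{p+r}\cap\text{Im}(\varphi)$.

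It remains to construct the bigrading, which is where the real work lies. I would take Deligne's explicit formula
$$I^{p,q}=\big(F^p\cap W_{p+q}\big)\cap\Big(\overline{F^q\cap W_{p+q}}+\sum_{j\ge1}\overline{F^{q-j}\cap W_{p+q-j-1}}\Big)$$
and verify, by descending induction on the weight $p+q$ and using that $F$ induces a pure Hodge structure of weight $m$ on each graded piece $\text{Gr}^W_m=W_m/W_{m-1}$, that the subspaces $I^{p,q}$ are in direct sum and that their partial sums recover both $W_\bullet$ and $F^\bullet$. Naturality under a type-$(r,r)$ morphism $\varphi$ is then immediate from the formula: $\varphi$ carries $W_m$ into $W'_{m+2r}$, carries $F^p$ into $(F')^{p+r}$, and, being defined over $\Q$, commutes with complex conjugation, so each building block $F^p\cap W_{p+q}$, $\overline{F^q\cap W_{p+q}}$, $\overline{F^{q-j}\cap W_{p+q-j-1}}$ maps into the corresponding block for $(I')^{p+r,q+r}$. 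The delicate step, and the one I expect to be the main obstacle, is the directness of the $I^{p,q}$ and their compatibility with $W$ and $F$ --- this is Deligne's lemma on opposed filtrations, and it uses the Hodge-theoretic positivity on the graded pieces in an essential way. An alternative that sidesteps the explicit formula is a dévissage on the length of $W_\bullet$: use the short exact sequences $0\to W_m\to H\to H/W_m\to 0$ of mixed Hodge structures together with strictness on the pure quotients $\text{Gr}^W$, propagated by a snake-lemma diagram chase; but keeping track of both filtrations simultaneously through that chase is at least as involved as establishing the splitting directly.
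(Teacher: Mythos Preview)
The paper does not prove this lemma at all; it is stated in the preliminaries section with a citation to Griffiths--Schmid's survey \cite[Lemma 1.13]{mixed} and is used as a black box. Your proposal via Deligne's canonical bigrading $I^{p,q}$ is the standard argument and is correct as outlined, so there is nothing to compare it against in the paper itself.
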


\begin{remark}
Let $(H_{\mathbb{Z}},W_\bullet,F^\bullet)$ and $(\tilde{H}_{\mathbb{Z}},\tilde{W}_\bullet,\tilde{F}^\bullet)$ be two mixed Hodge structures. Their tensor product $H\otimes \tilde{H}$ inherits a mixed Hodge structure with the weight filtration
$$0 \subset \cdots \subset \sum\limits_{a+b\le m-1} W_a\otimes \tilde{W}_b  \subset \sum\limits_{a+b\le m} W_a\otimes \tilde{W}_b   \subset \sum\limits_{a+b\le m+1} W_a\otimes \tilde{W}_b \subset \cdots \subset  H_{\Q}\otimes \tilde{H}_{\Q},$$
and the Hodge filtration
$$ H\otimes \tilde{H} \supset \cdots \supset \sum\limits_{a+b\ge p-1} F^a\otimes \tilde{F}^b \supset \sum\limits_{a+b\ge p} F^a\otimes \tilde{F}^b \supset \sum\limits_{a+b\ge p+1} F^a\otimes \tilde{F}^b \supset \cdots \supset 0.$$
\end{remark}

We are interested in the cohomology of quasi-projective complex varieties, which has a canonical polarizable mixed Hodge structure by the following theorem of Deligne:
\begin{thm}[Deligne \protect{\cite[Theorem 3.2.5]{deII}}]\label{polar}
Let $X$ be a quasi-projective complex variety. Then $H^\bullet(X;\Q)$ carries a canonical polarizable mixed Hodge structure.
\end{thm}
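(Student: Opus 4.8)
The plan is to reprove the theorem along the lines of Deligne \cite{deII}, bootstrapping everything from the classical Hodge theory of smooth projective varieties via resolution of singularities and cohomological descent, and organizing the bookkeeping through the formalism of \emph{cohomological mixed Hodge complexes}. The base case is a smooth projective variety $X$: harmonic theory furnishes the Hodge decomposition $H^k(X;\mathbb{C})=\bigoplus_{p+q=k}H^{p,q}$, i.e. a pure Hodge structure of weight $k$ on $H^k(X;\Q)$, and the hard Lefschetz theorem together with the Hodge--Riemann bilinear relations shows it is polarized by cup product with a power of an ample class. Every subsequent step reduces to this one.

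First I would handle a smooth but not necessarily proper variety $U$. By Nagata and Hironaka, choose a smooth projective compactification $j\colon U\hookrightarrow X$ whose boundary $D=X\setminus U$ is a normal crossings divisor. Then $Rj_*\mathbb{C}_U$ is quasi-isomorphic to the logarithmic de Rham complex $\Omega^\bullet_X(\log D)$, so $H^k(U;\mathbb{C})=\mathbb{H}^k(X,\Omega^\bullet_X(\log D))$. I would equip this complex with the Hodge filtration $F=\sigma_{\ge p}$ (the stupid filtration) and the weight filtration $W$ measuring the order of log poles, suitably renumbered. Poincar\'e residues identify $\mathrm{Gr}^W_m\Omega^\bullet_X(\log D)$ with a shifted constant-coefficient complex on the disjoint union $D^{(m)}$ of $m$-fold intersections of components of $D$, Tate-twisted by $(-m)$; by the projective case its hypercohomology is pure. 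The weight spectral sequence $E_1^{-m,k+m}=H^{k-m}(D^{(m)};\Q)(-m)\Rightarrow H^k(U;\Q)$ then degenerates at $E_2$ (a consequence of the $E_1$-degeneration of the Hodge-to-de-Rham spectral sequence for $\Omega^\bullet_X(\log D)$, in the spirit of Theorem \ref{ever}), and one checks that $(H^k(U;\Q),W[k],F)$ is a mixed Hodge structure with weights in $[k,2k]$.

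Next, for an arbitrary variety $X$ I would pass to simplicial (hyper)resolutions. If $X$ is proper, build by iterated resolution of singularities a smooth proper simplicial variety $X_\bullet\to X$ of cohomological descent; then $H^k(X;\Q)$ is the abutment of the descent spectral sequence $E_1^{p,q}=H^q(X_p;\Q)\Rightarrow H^{p+q}(X;\Q)$, each $E_1$ term pure of weight $q$. For general $X$, combine both constructions: choose a smooth simplicial variety $U_\bullet\to X$ of descent type together with levelwise normal crossings compactifications $X_\bullet\supset D_\bullet$. Assembling the logarithmic de Rham complexes of the $X_p$ into a total complex produces a cohomological mixed Hodge complex $K$ on $X$ --- a complex of $\Q$-sheaves with a filtration $W$, a filtered complex of $\mathbb{C}$-sheaves carrying $(W,F)$, and a filtered comparison quasi-isomorphism over $\mathbb{C}$, such that each $(\mathrm{Gr}^W_m,F)$ is a pure Hodge complex of weight $m$. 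Deligne's key lemma, whose proof is a spectral-sequence-plus-strictness argument, then says that the hypercohomology of a cohomological mixed Hodge complex carries a functorial mixed Hodge structure with $E_2$-degeneration of the weight spectral sequence and $E_1$-degeneration of the Hodge spectral sequence; applied to $K$ this puts the desired structure on $H^*(X;\Q)$.

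Finally I would verify that the structure is canonical, functorial, and polarizable. Independence of the auxiliary choices follows because any two compactifications or hypercoverings are dominated by a common refinement, combined with strictness of morphisms of mixed Hodge structures --- the comparison maps induced by refinements are morphisms of MHS, hence automatically isomorphisms of MHS; functoriality in $X$ is obtained the same way. Polarizability is immediate from the construction: by the weight spectral sequence each $\mathrm{Gr}^W_m H^k(X;\Q)$ is a subquotient of a direct sum of Tate twists of cohomology groups of smooth projective varieties, and subquotients and Tate twists of polarizable Hodge structures are polarizable. The main obstacle is precisely the abstract mixed-Hodge-complex lemma: one must show that the a priori unrelated filtrations $W$ and $F$ on a complex interact compatibly enough to yield a genuine mixed Hodge structure on the abutment, and this requires the simultaneous proof, by a delicate double induction on the pages of the weight spectral sequence, that the weight filtration degenerates at $E_2$ while all differentials remain strict for $F$. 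Resolution of singularities and the cohomological-descent formalism are the essential geometric inputs that make the reduction to the projective case work at all.
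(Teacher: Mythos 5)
The paper does not actually prove this statement---it is quoted as background with a citation to Deligne---and your outline is precisely the argument of Deligne's Hodge II and Hodge III that the citation points to: the polarized pure case for smooth projective varieties, logarithmic de Rham complexes on normal crossings compactifications for smooth open varieties, simplicial hyperresolutions assembled into a cohomological mixed Hodge complex for arbitrary varieties, canonicity via common refinements plus strictness, and polarizability of the weight graded pieces because they are Tate twists of subquotients of cohomology of smooth projective varieties. Your sketch is correct at this level of detail; the only small imprecision is that the $E_2$-degeneration of the weight spectral sequence is deduced from the higher differentials being morphisms between pure Hodge structures of distinct weights (via Deligne's two-filtrations lemma), rather than being a direct consequence of the $E_1$-degeneration of the Hodge-to-de Rham spectral sequence.
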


Here a polarizable mixed Hodge structure means all graded pieces $\text{Gr}_m(W_\bullet)$ are polarizable Hodge structures. We can decompose a polarized Hodge structure into a direct sum of simple objects by the following theorem:

\begin{thm}[\protect{\cite[Corollary 2.12]{peters}}]\label{semi}
The category of polarizable Hodge structures of weight $m$ is semi-simple.
\end{thm}

\section{Stable Cohomology of $\textup{Mod}(\Sigma_{g,p}^b)$ with Coefficients in $H^1(\Sigma_{g,p}^b;\mathbb{Q})^{\otimes r}$}

In this section, we will review Looijenga's results \cite{LooiStable} and Kawazumi's results \cite{kawa} and give an explicit expression of $H^{\bullet}(\M_{g,p}^b;H^1(\Sigma_{g,p}^b;\mathbb{Q})^{\otimes r})$ for $p+b\ge 1$.

\subsection{Looijenga's results}
Let $\mathcal{C}_{g,r}$ be the moduli space of closed genus-$g$ Riemann surfaces with $r$ (not necessarily distinct) ordered marked points. The map $\mathcal{C}_{g,r}\to \mathcal{M}_g$ obtained by forgetting all the marked points can also be viewed as the $r$-fold fiber product of the universal curve $\pi: \mathcal{M}_{g,1}\to\mathcal{M}_g$.

 Looijenga computed the rational stable cohomology of $\mathcal{C}_{g,r}$ in a stable range, in terms of the following cohomology classes:
\begin{enumerate}
\item For $1\le i\le r$, let $$u_i\in H^2(\mathcal{C}_{g,r};\Q)$$ be the first Chern class of $\theta_i=f_i^*(\theta)$, where $f_i:\mathcal{C}_{g,r}\to \mathcal{M}_{g,1}$ is the map forgetting all but the $i$-th marked point, and $\theta$ is the relative tangent sheaf of $\pi:\mathcal{M}_{g,1}\to \mathcal{M}_g$.
\item For a subset $I$ of $[r]=\{1,2,\cdots,r\}$ with $|I|\ge2$, let $$a_I\in H^{2|I|-2}(\mathcal{C}_{g,r};\Q)$$ be the Poincar\'e dual of the subvariety of $\mathcal{C}_{g,r}$ where the marked points $\{x_i\}_{i\in I}$ coincide:
$$x_i=x_j, \text{ for all }i,j\in I.$$
\end{enumerate}
Looijenga (\cite[Lemma 2.4]{LooiStable}) showed that these cohomology classes satisfy the relations
\begin{equation}\label{ua1} 
 u_i a_I=u_j a_I, \text{ if } i,j\in I;
 \end{equation}
 \begin{equation}\label{ua2}
 a_I a_J=u_i^{|I\cap J|-1} a_{I\cup J}, \text{ if } i\in I\cap J \neq \emptyset.
\end{equation}
We now state Looijenga's theorem on the rational cohomology of $\mathcal{C}_{g,r}$.
\begin{thm}[Looijenga \protect{\cite[Theorem 2.3]{LooiStable}}]\label{wowLooi}
Let $A^{\bullet}_r$ denote the graded commutative $\Q$-algebra generated by all $u_i$ and $a_I$ subject to the relations (\ref{ua1}) (\ref{ua2}).
There is an algebra homomorphism
$$ H^\bullet(\M_{g};\Q) \otimes A^{\bullet}_r \to H^\bullet (\mathcal{C}_{g,r};\mathbb{Q})$$ which is an isomorphism in degrees $k$ such that $g\ge \frac{3}{2}k+1$.
\end{thm}

\begin{remark}\label{rmk1} Looijenga provided an explicit description of $A^{\bullet}_r$ as a vector space:
$$A^{\bullet}_r=\bigoplus_{P|[r]} \mathbb{Q}[u_I:I\in P]a_P, \text{ where } a_P=\prod\limits_{I\in P,|I|\ge 2} a_I.$$ 
Here, the notation is as follows:
\begin{itemize}
\item $P|[r]$ denotes a \textbf{partition} of the set $[r]$, i.e. $P=\{ I_1,I_2,\cdots, I_m\}$ where $I_i$ are disjoint nonempty subsets of $[r]$, and $I_1\cup I_2 \cup\cdots\cup I_m=[r]$. The class $a_P=\prod\limits_{I\in P,|I|\ge 2} a_I$ represents the Poincar\'e dual of the subvariety of $\mathcal{C}_{g,r}$ where the marked points indexed by $I_a\in P$ coincide for each $1\le a\le m$.
\item For a given partition $P$ of $[r]$, we define an equivalence relation on $\{u_1,\cdots,u_r\}$:
$$u_i \sim u_j \text{ if and only if }i,j\in I \text{ for some }I\in P.$$ The equivalence class of $u_i$ for $i\in I$ is denoted by $u_I$, with the convention that if $I$ is a singleton $\{i\}$, then $u_I=u_i$.
\end{itemize}
By observing the two relations (\ref{ua1}) (\ref{ua2}) that $u_i,a_I$ satisfy, it is not hard to derive the expression of $A^{\bullet}_r$ above. There is a simple mixed Hodge structure on $A^{\bullet}_r$ where the degree-$2m$ part has Hodge type $(m,m)$. Since $H^\bullet(\M_{g};\Q)$ carries a canonical mixed Hodge structure (see e.g. \cite{LooiStable}), this induces a mixed Hodge structure on $H^\bullet (\mathcal{C}_{g,r};\mathbb{Q})$.

\end{remark}

To study the twisted cohomology of $\M_g$, Looijenga introduced a key subspace of $A^{\bullet}_r$ (\cite[Corollary 2.7]{LooiStable}):

\begin{equation}\label{a'} A'^{\bullet}_r= \bigoplus_{P|[r]} (\prod_{\{i\}\in P} u_i^2)\mathbb{Q}[u_I:I\in P] a_P.\end{equation}
By analyzing the Leray spectral sequence associated with the forgetful map $\mathcal{C}_{g,r}\to \mathcal{M}_g$, Looijenga embedded the twisted cohomology groups of $\M_g$ with coefficients $H^1(\Sigma_g;\Q)^{\otimes r}$ into $H^{\bullet}(\mathcal{C}_{g,r};\Q)$, leading to the following theorem:

\begin{thm}[Looijenga \protect{\cite[Corollary 3.3]{LooiStable}}]\label{lolo}
Let $A'^{\bullet}_r$ be as defined in \eqref{a'}. There is a graded $S_r$-equivariant map of $H^\bullet(\M_{g};\Q)$-modules, which is also a morphism of mixed Hodge structures:
$$ H^\bullet(\M_{g};\Q)\otimes A'^{\bullet}_r\to H^{\bullet-r}(\textup{Mod}_g;H^1(\Sigma_g;\mathbb{Q})^{\otimes r}),$$
which is an isomorphism in degrees $k$ such that $g\ge \frac{3}{2}k+1$.
\end{thm}

Here, the symmetric group $S_r$ acts on the left-hand side by permuting the indices in $[r]$, and on the right-hand side by permuting the tensor factors of $H^1(\Sigma_g;\mathbb{Q})^{\otimes r}$. This equivariance allows Looijenga to compute the cohomology of $\M_g$ with coefficients in any Schur functors of $Sp_{2g}(\mathbb{C})$.

\subsection{Kawazumi's results}
Instead of studying Leray spectral sequences, Kawazumi \cite{kawa} computed the twisted cohomology of $\M_{g,p}^b$ with $b\ge 1$ in a different way. He first introduced a notation called weighted partitions:
\begin{defn}[Kawazumi \protect{\cite[Defn.~1.1]{kawa}}]
A set $\hat{P}=\{(S_1,i_1),(S_2,i_2),\cdots,(S_\nu,i_\nu)\}$ is a \textbf{weighted partition} of the index set $\{1,2,\cdots,r\}$ if
\begin{enumerate} 
\item The set $\{S_1,S_2,\cdots,S_\nu\} $ is a partition of the set $\{1,2,\cdots,r\}$.
\item $i_1,i_2,\cdots,i_\nu \ge 0$ are non-negative integers.
\item Each $(S_a,i_a),1\le a \le \nu$, satisfies: $i_a+|S_a|\ge 2$.
\end{enumerate}
We denote by $\mathcal{P}_r$ the set of all weighted partitons of $\{1,2,\cdots,r\}$.
\end{defn}

Given a weighted partition $\hat{P}$, Kawazumi defined (\cite[eq.~1.10]{kawa}) the twisted Morita-Mumford class $$m_{\hat{P}}\in H^{2(\sum\limits_{a=1}^{\nu} i_a)+r-2\nu}(\M_g^1;H^1(\Sigma_g^1;\mathbb{Z})^{\otimes r}),$$
and then computed the twisted cohomology of $\M_{g,1}$ with coefficients in $H^1(\Sigma_g^1;\mathbb{Z})^{\otimes r}$:
\begin{thm}[Kawazumi \protect{\cite[Theorem 1.B]{kawa}}]\label{kawaB}
For degrees $\le N(g)-r$
$$H^\bullet (\M_g^1;H^1(\Sigma_g^1;\mathbb{Z})^{\otimes r})\cong \bigoplus_{\hat{P}\in \mathcal{P}_r} H^{\bullet } (\M_g^1;\mathbb{Z})m_{\hat{P}}.$$ 
\end{thm}

Kawazumi also computed the twisted cohomology of $\M_{g,p}^b$ with coefficients $H^1(\Sigma_{g,p}^b;\mathbb{Z})^{\otimes r}$ for general $b\ge 1$. His proof is based on the Gysin sequence (Proposition \ref{gy}) and induction.

\begin{thm}[Kawazumi \protect{\cite[Theoerm 1.A]{kawa}}]\label{ka}
For $b\ge1$, $ p\ge 0$, we have
$$H^\bullet(\M_{g,p}^b;H^1(\Sigma_{g,p}^b;\mathbb{Z})^{\otimes r})\cong H^\bullet (\M_{g,p}^b;\mathbb{Z})\otimes_{H^\bullet  (\M_g^1;\mathbb{Z})} H^\bullet(\M_g^1;H^1(\Sigma_g^1;\mathbb{Z})^{\otimes r})$$ in degrees $\le N(g)-r$. 
\end{thm}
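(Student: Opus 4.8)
The plan is to reduce the statement to the one-boundary case $H^\bullet(\M_g^1;H^1(\Sigma_g^1;\mathbb{Z})^{\otimes r})$ settled in the preceding theorem, by propagating the base-change formula along the two elementary surface operations that build every $\Sigma_{g,p}^b$ with $p+b\ge1$ out of $\Sigma_g^1$: \textbf{(M1)} adjoining a puncture, governed by the Birman exact sequence $1\to\pi_1(\Sigma_{g,q}^c,x_0)\to\M_{g,q+1}^c\to\M_{g,q}^c\to1$ (cf.\ Theorem~\ref{Birman} for the mod-$\ell$ variant), and \textbf{(M2)} blowing a puncture up to a boundary component, governed by the central extension $1\to\mathbb{Z}\to\M_{g,q-1}^{c+1}\to\M_{g,q}^c\to1$ (cf.\ Proposition~\ref{lBirman} for the mod-$\ell$ variant). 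Since $\Sigma_g^1$, after adjoining $p$ punctures and then alternately adjoining a puncture and blowing it up $b-1$ times, becomes $\Sigma_{g,p}^b$, it suffices to check that each move changes $H^\bullet(-;H^1(-;\mathbb{Z})^{\otimes r})$ only by the base change induced by its effect on $H^\bullet(-;\mathbb{Z})$, namely adjoining one polynomial generator $e_i$ under \textbf{(M1)} and quotienting by one generator $e_i$ under \textbf{(M2)} (Theorem~\ref{wuwu}). Note that \textbf{(M1)} enlarges the coefficient module (the rank of $H^1$ goes up by one), so the content of that step is that this enlargement is exactly absorbed by the extra generator $e_i$.

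Move \textbf{(M2)} is the clean one: blowing a puncture up to a boundary component does not change $H^1$, so the coefficient module $M=H^1(\Sigma_{g,q}^c;\mathbb{Z})^{\otimes r}$ is unchanged, and the Gysin sequence (Proposition~\ref{gy}) of the central extension reads
\[
\cdots\to H^{k-2}(\M_{g,q}^c;M)\xrightarrow{\ \smile\,e_i\ }H^k(\M_{g,q}^c;M)\to H^k(\M_{g,q-1}^{c+1};M)\to H^{k-1}(\M_{g,q}^c;M)\xrightarrow{\ \smile\,e_i\ }H^{k+1}(\M_{g,q}^c;M)\to\cdots,
\]
where $e_i\in H^2(\M_{g,q}^c;\mathbb{Z})$ is the Euler class of the blown-up puncture. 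Inductively $H^\bullet(\M_{g,q}^c;M)$ is a \emph{free} module over $H^\bullet(\M_{g,q}^c;\mathbb{Z})$, so cup product with $e_i$ is injective in the stable range, the connecting maps vanish, and the Gysin sequence collapses to $H^\bullet(\M_{g,q-1}^{c+1};M)\cong H^\bullet(\M_{g,q}^c;M)/(e_i)$; since $H^\bullet(\M_{g,q-1}^{c+1};\mathbb{Z})\cong H^\bullet(\M_{g,q}^c;\mathbb{Z})/(e_i)$ stably, this is the desired base change.

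Move \textbf{(M1)} is the substantial one. Since $\pi_1(\Sigma_{g,q}^c,x_0)$ is free, the Hochschild--Serre spectral sequence of the Birman sequence has only the rows $t=0,1$ and collapses to a long exact sequence relating $H^\bullet(\M_{g,q+1}^c;M')$ (with $M'=H^1(\Sigma_{g,q+1}^c;\mathbb{Z})^{\otimes r}$) to $H^\bullet(\M_{g,q}^c;H^0(\pi_1;M'))$ and $H^{\bullet-1}(\M_{g,q}^c;H^1(\pi_1;M'))$, where $\pi_1=\pi_1(\Sigma_{g,q}^c,x_0)$. To analyze the fiber cohomology one first records that $H^1(\Sigma_{g,q+1}^c;\mathbb{Z})$ carries a $\M_{g,q+1}^c$-invariant two-step filtration with graded pieces the symplectic module $H^1(\Sigma_g;\mathbb{Z})$ and a trivial module, so $M'$ is a successive extension of modules of the form $H^1(\Sigma_g;\mathbb{Z})^{\otimes s}$ with $s\le r$; the relevant extensions are \emph{non-split}, and it is precisely this non-splitting---controlled by the classes $e_i$---that makes the rank jump disappear. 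Using Poincar\'e--Lefschetz duality for the free group $\pi_1$ one expresses $H^0(\pi_1;-)$ and $H^1(\pi_1;-)$ on these pieces again as sums of tensor powers $H^1(\Sigma_g;\mathbb{Z})^{\otimes s'}$ of tensor degree at most $r+1$; the inductive hypothesis for $\M_{g,q}^c$ (in all tensor degrees) then computes the $E_2$-page, one checks the connecting differential vanishes, and reassembling yields $H^\bullet(\M_{g,q+1}^c;M')\cong H^\bullet(\M_{g,q}^c;H^1(\Sigma_{g,q}^c;\mathbb{Z})^{\otimes r})\otimes_{\mathbb{Z}}\mathbb{Z}[e_{q+1}]$ in the stable range, which is the base change induced by $H^\bullet(\M_{g,q}^c;\mathbb{Z})\hookrightarrow H^\bullet(\M_{g,q+1}^c;\mathbb{Z})$.

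I expect the main obstacle to be the control of differentials and extensions underlying both moves: the injectivity of $\smile e_i$ in \textbf{(M2)}, the vanishing of the connecting differential in \textbf{(M1)}, and the bookkeeping of the non-split extensions in the coefficient module---in short, the assertion that the twisted cohomology is, at every stage, a \emph{free} module over the untwisted cohomology. Over $\mathbb{Q}$ this is forced by weight considerations: one equips every group with the mixed Hodge structure of the corresponding moduli space and invokes Deligne's degeneration theorem (Theorem~\ref{ever}) together with semisimplicity of polarizable Hodge structures (Theorem~\ref{semi}) to kill the offending differentials, exactly as in the proof of Theorem~\ref{thmA}. Over $\mathbb{Z}$ (Kawazumi's setting) these tools are unavailable, so one must instead identify each differential explicitly with an operation built from the Euler classes and the cup product, and exhibit the free basis as the twisted Miller--Morita--Mumford classes $m_{\hat{P}}$ indexed by weighted partitions; a further, purely bookkeeping obstacle is then to match that indexing with the monomials in the classes $u_i,a_I$ used elsewhere in this paper.
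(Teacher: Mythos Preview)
This theorem is not proved in the paper; it is quoted from Kawazumi \cite{kawa} and invoked (after tensoring with $\mathbb{Q}$) as an input to the proof of Theorem~\ref{thmA}. Immediately before that proof the paper says explicitly: ``We turn to give a quick proof of Theorem~\ref{thmA} using Theorem~\ref{ka} by Kawazumi.'' There is therefore no proof in the paper against which to compare your proposal.

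That said, your outline does share its Gysin-sequence step \textbf{(M2)} with arguments the paper uses elsewhere: precisely this move, over $\mathbb{Q}$, is how the paper passes from $\M_{g,1}$ to $\M_g^1$ and from $\M_{g,p-1}^1$ to $\M_{g,p}$ in the proof of Theorem~\ref{thmA}. Step \textbf{(M1)} is where the real content lies, and your proposal is honest about its gaps: the phrases ``one checks the connecting differential vanishes'' and ``reassembling yields'' hide the whole integral argument. Over $\mathbb{Q}$ one could indeed kill these differentials by weight, as you note; over $\mathbb{Z}$ Kawazumi instead constructs the twisted Miller--Morita--Mumford classes $m_{\hat P}$ explicitly and shows they form a free basis, which is a different (constructive) mechanism from the spectral-sequence bookkeeping you sketch. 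You have correctly located the obstacle; filling it in amounts to reproducing Kawazumi's paper rather than anything in this one.
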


\subsection{Re-prove the case of $\Sigma_{g}^1$} 

While Kawazumi’s Theorem \ref{kawaB} allows us to obtain the cohomology of $\M_g^1$ with coefficients in $H^1(\Sigma_g^1;\Q)^{\otimes r}$, we provide an alternative derivation using Looijenga’s framework.

We define a subspace of $A_r^{\bullet}$ from Theorem \ref{wowLooi} with mixed Hodge substructures:
\begin{equation}\label{arrr} A''^{\bullet}_r=\bigoplus_{P|[r]} (\prod_{\{i\}\in P}u_i) \Q[u_I:I\in P] a_P.\end{equation}
A reinterpretation of Theorem \ref{kawaB} using Looijenga's setup is:
\begin{prop}\label{prop3.4} Let $A''^{\bullet}_r$ be as defined in \eqref{arrr}. There is a graded map of $H^\bullet(\M_{g}^1;\Q)$-modules, which is also a morphism of mixed Hodge structures:
$$ H^\bullet(\M_{g}^1;\Q)\otimes A''^{\bullet}_r\to H^{\bullet-r}(\textup{Mod}_g^1;H^1(\Sigma_g^1;\Q)^{\otimes r}),$$
which is an isomorphism in degrees $k$ such that $g\ge \frac{3}{2}k+1$.
\end{prop}

A non-rigid proof is identifying the twisted Morita-Mumford class $m_{\hat{P}}$ with $\prod\limits_{S_a\in P} u_{S_a}^{i_a} \cdot a_P$, where $\hat{P}=\{(S_1,i_1),(S_2,i_2),\cdots,(S_\nu,i_\nu)\}$ and $P=\{S_1,S_2,\cdots,S_{\nu}\}$. It can be observed that their degrees defer by $r$, which is because Proposition \ref{prop3.4} is stated with a degree shift by $r$. To rigorously prove Proposition \ref{prop3.4}, we first need the following Lemma:
\begin{lem}\label{lem3.5}
 $H^0(\M_{g,1};H^2(\Sigma_g;\Q))$ is isomorphic to $\Q$ generated by $a_{\{1,2\}}$.
\end{lem}
\begin{proof}
Recall that $\mathcal{C}_{g,2}(\ell)$ is the moduli space of closed genus-$g$ Riemann surfaces with two ordered marked points that are not necessarily distinct. By forgetting the first marked point in $\mathcal{C}_{g,2}(\ell)$, we obtain a map $\mathcal{C}_{g,2}\to \mathcal{M}_{g,1}$. The associated Leray spectral sequence is 
$$E_2^{p,q}=H^p(\mathcal{M}_{g,1};H^q(\Sigma_g;\Q))\Rightarrow H^{p+q}(\mathcal{C}_{g,2};\Q).$$
We then identify $H^0(\M_{g,1};H^2(\Sigma_g;\Q))$ with $E_2^{0,2}$. This Leray spectral sequence degenerates at $E_2$ by Deligne's Theorem \ref{ever}, so $E_{g}=E_2$.  Therefore, the map 
$$H^k(\mathcal{C}_{g,2};\Q)\to E_{g}^{k-2,2}\to E_2^{k-2,2}=H^{k-2}(\mathcal{M}_{g,1};H^2(\Sigma_g;\Q))$$
is integration along fibers. Recall that $a_{\{1,2\}}$ denotes the Poincar\'e dual of $\mathcal{M}_{g,1}$ in $\mathcal{C}_{g,2}$, where $\mathcal{M}_{g,1}$ embeds into $\mathcal{C}_{g,2}$ via the trivial section: $$\mathcal{M}_{g,1}\to\mathcal{C}_{g,2}, (C,x_1)\mapsto (C,x_1,x_2=x_1).$$ Then for any $\omega\in H^{k-2}(\mathcal{M}_{g,1};H^2(\Sigma_g;\Q))$, letting $\tilde{\omega}$ be its preimage in $H^k(\mathcal{C}_{g,2};\Q)$, we have
$$\int_{\mathcal{C}_{g,2}} \tilde{\omega} \wedge a_{\{1,2\}}=\int_{\mathcal{M}_{g,1}} \omega.$$
 Thus $E_2^{0,2}=H^0(\M_{g,1};H^2(\Sigma_g;\Q))$ is isomorphic to $\Q$ generated by $a_{\{1,2\}}$.
\end{proof}

\begin{proof}[\rm\bf{Proof of Proposition \ref{prop3.4}}]
We first prove by induction on $r$ that there is an isomorphism in degrees $\le \frac{2}{3}(g-1)$:
$$ H^\bullet(\M_{g,1};\Q)\otimes A''^{\bullet}_r \to H^{\bullet-r}(\M_{g,1};H^1(\Sigma_{g,1};\Q)^{\otimes r}).$$
When $r=0$, the statement is vacantly true.

For $r\ge 1$, we suppose it is true for any $s \le r-1$ there is an isomorphism in degrees $\le \frac{2}{3}(g-1)$:
$$ H^\bullet(\M_{g,1};\Q)\otimes A''^{\bullet}_s \to H^{\bullet-s}(\M_{g,1};H^1(\Sigma_{g,1};\Q)^{\otimes s}).$$

Recall that $\mathcal{C}_{g,r+1}$ is the moduli space of closed genus-$g$ Riemann surfaces with $(r+1)$ ordered marked points. Looijenga's Theorem \ref{wowLooi} gives us the following isomorphism of mixed Hodge structures:
$$H^\bullet(\mathcal{C}_{g,r+1};\Q)\cong H^\bullet(\M_{g};\Q)\otimes A^{\bullet}_{r+1} $$ in degrees $\le \frac{2}{3}(g-1)$.

By forgetting all but the last marked point in $\mathcal{C}_{g,r+1}$, we obtain a map $f:\mathcal{C}_{g,r+1}\to \mathcal{M}_{g,1}$ where $\mathcal{C}_{g,r+1}$ is a complex orbifold. The associated Leray spectral sequence with $\Q$-coefficients is
$$ E^{p,q}_2=H^p (\mathcal{M}_{g,1};R^q f_*(\Q))\Rightarrow H^{p+q}(\mathcal{C}_{g,r+1};\Q).$$
Since we are working over $\Q$, we have $R^q f_*(\Q)=H^q((\Sigma_g)^{\times r};\Q)$.
Notice that each fiber of $f$ is projective. Thus we can apply the orbifold version of Deligne's Theorem \ref{ever}, which means the above spectral sequence degenerates at page $2$. This implies:
$$H^k(\mathcal{C}_{g,r+1};\Q)\cong \bigoplus_{p+q=k} H^p (\mathcal{M}_{g,1};H^q((\Sigma_g)^{\times r};\Q)).$$
The moduli space $\mathcal{M}_{g,1}$ of closed genus-$g$ Riemann surfaces with one marked point has the same rational cohomology as $\M_{g,1}$. Thus we can rewrite above as
$$H^k(\mathcal{C}_{g,r+1};\Q)\cong \bigoplus_{p+q=k} H^p (\M_{g,1};H^q((\Sigma_g)^{\times r};\Q)). $$
The Leray filtration preserves the mixed Hodge structure of $H^\bullet(\mathcal{C}_{g,r+1};\Q)$, therefore the $E_2$-page terms $E_2^{p,q}=H^p (\M_{g,1};H^q((\Sigma_g)^{\times r};\Q))$ inherit mixed Hodge structures. 

We can expand the coefficients $H^q((\Sigma_g)^{\times r};\Q)$ by the K\"unneth formula:
$$ H^q((\Sigma_{g})^{\times r};\Q)\cong \bigoplus_{i_1+i_2+\cdots+i_r=q} H^{i_1}(\Sigma_{g};\Q)\otimes H^{i_2}(\Sigma_{g};\Q) \otimes \cdots \otimes H^{i_r}(\Sigma_{g};\Q).$$
Identifying $H^\bullet(\Sigma_g;\Q)$ with $H^\bullet(\Sigma_{g,1};\Q)$ as $\M_{g,1}$ modules, we have
$$\begin{aligned} &H^p (\M_{g,1};H^q((\Sigma_g)^{\times r};\Q)) \\
\cong & \bigoplus_{i_1+i_2+\cdots+i_r=q} H^p(\M_{g,1};H^{i_1}(\Sigma_{g,1};\Q)\otimes H^{i_2}(\Sigma_{g,1};\Q) \otimes \cdots \otimes H^{i_r}(\Sigma_{g,1};\Q))\end{aligned}$$

Observe that $H^{k-r}(\M_{g,1};H^1(\Sigma_{g,1};\Q)^{\otimes r})$ is the component of $H^{k-r} (\M_{g,1};H^r((\Sigma_g)^{\times r};\Q))$ with $i_1=i_2=\cdots=i_r=1$. Let's think about what the remaining components of $H^{k-r} (\M_{g,1};H^r((\Sigma_g)^{\times r};\Q))$ are:

\begin{enumerate}
\item When some $i_j=0$, the cohomology $$H^p(\M_{g,1};H^{i_1}(\Sigma_{g,1};\Q)\otimes \cdots \otimes H^{i_j}(\Sigma_{g,1};\Q) \otimes \cdots \otimes H^{i_r}(\Sigma_{g,1};\Q))$$ is a component of $H^p (\M_{g,1};H^q((\Sigma_g)^{\times r};\Q))$ where $q\le2r-2$. For each $2\le i \le r+1$, define $\psi_i:\mathcal{C}_{g,r+1} \to \mathcal{C}_{g,r}$ to be the map that forgets the $i$-th marked point. Notice the map $f:\mathcal{C}_{g,r+1}\to \mathcal{M}_{g,1}$ factors through $\psi_i$, so we have the following diagram:
\begin{equation*}\begin{aligned}\xymatrix{(\Sigma_g)^{\times r} \ar[d] \ar[r] & \mathcal{C}_{g,r+1} \ar[r]^{f} \ar[d]^{\psi_i}  & \mathcal{M}_{g,1} \ar[d]^{id} \\ (\Sigma_g)^{\times (r-1)}\ar[r]& \mathcal{C}_{g,r} \ar[r]& \mathcal{M}_{g,1} }\end{aligned}.\end{equation*}
The map $\psi_i^*:H^\bullet(\mathcal{C}_{g,r};\Q)\to H^\bullet(\mathcal{C}_{g,r+1};\Q)$ induces maps between items in the two Leray spectral sequences. That is
$$ H^p (\M_{g,1};H^q((\Sigma_g)^{\times (r-1)};\Q)) \to H^p (\M_{g,1};H^q((\Sigma_g)^{\times r};\Q)).$$
When $q\le 2r-2$, the image of the above map is clear by the K\"unneth formula and the induction on $r$.

\item When some $i_j=2$, the cup product
\begin{equation*}\begin{aligned}
H^{p}(\M_{g,1};H^{i_1}(\Sigma_{g,1};\Q)\otimes \cdots \otimes H^{\widehat{i_{j}}}(\Sigma_{g,1};\Q) \otimes \cdots \otimes H^{i_r}(\Sigma_{g,1};\Q)) \\
\xymatrix{
\otimes H^0(\M_{g,1};H^{2}(\Sigma_{g,1};\Q)) \ar[d] \\ H^p(\M_{g,1};H^{i_1}(\Sigma_{g,1};\Q)\otimes \cdots \otimes H^{i_j}(\Sigma_{g,1};\Q) \otimes \cdots \otimes H^{i_r}(\Sigma_{g,1};\Q))}
\end{aligned}
\end{equation*}
turns out to be an isomorphism by direct computations.
The term $$H^{p}(\M_{g,1};H^{i_1}(\Sigma_{g,1};\Q)\otimes \cdots \otimes H^{\widehat{i_j}}(\Sigma_{g,1};\Q) \otimes \cdots \otimes H^{i_r}(\Sigma_{g,1};\Q))$$ is a component of $H^p (\M_{g,1};H^\bullet((\Sigma_g)^{\times (r-1)};\Q)) $, so it is known by induction. By Lemma \ref{lem3.5}, we know $H^0(\M_{g,1};H^{2}(\Sigma_{g,1};\Q))$ is $\Q$ generated by $a_{\{1,j+1\}}$. We also make use of the relations (\ref{ua1}) (\ref{ua2}) satisfied by $u_i,a_I$ to simplify the notation (e.g.\ write $a_{\{1,2\}} a_{\{1,3\}}=a_{\{1,2,3\}}$).
\end{enumerate}

The maps in (1) and (2) are morphisms of mixed Hodge structures. All the Hodge structures involved are polarizable, hence semi-simple by Theorem \ref{semi}. Therefore, after carefully writing terms of the above two types in terms of partitions $P$, we can exclude them to get $H^{k-r} (\M_{g,1};H^1(\Sigma_{g,1};\Q)^{\otimes r})$ in Table 1 as follows. (For the polynomials in the table, we mean the degree $k$ parts of them. For the $j_1,j_2\cdots$ indices in the table, they should be distinct and between $2$ and $r+1$. The order listed is by increasing $q$. As a shorthand, we denote $K=H^\bullet(\M_{g};\Q)$. We always have degree $k\le \frac{2}{3}(g-1)$.)

\footnotesize
\begin{longtable}{|c|c|c|}
\caption{Rational cohomology of $\mathcal{C}_{g,r+1}$ written in two ways}\\
\hline
\endfirsthead
\hline
\endlastfoot
$P|[r+1]$ & $H^k (\mathcal{C}_{g,r+1};\Q)$ &  $ \bigoplus\limits_{p+q=k} H^p (\M_{g,1};H^q((\Sigma_g)^{\times r};\Q))$ \\
\hline
$\{1\},\cdots,\{r+1\}$ & $K\otimes \Q[u_1,u_2,\cdots, u_{r+1}]$ & $K\otimes\Q[u_{r+1}]$ \\
 & & $K\otimes u_{j_1}\Q[u_{r+1},u_{j_1}]$ \\
 & & $K\otimes u_{j_1} \cdot u_{j_2} \Q[u_{r+1},u_{j_1},u_{j_2}] $\\
 & & \vdots \\
 & & $K\otimes u_{j_1} \cdots u_{j_{r-1}} \Q[u_{r+1},u_{j_1},\cdots,u_{j_{r-1}}]$\\
& & ? $\subset H^{k-r}(\M_{g,1};H^1(\Sigma_{g,1};\Q)^{\otimes r})$ \\
\hline
$\{r+1\},$ & $K\otimes \Q[u_{r+1},u_{I_2}]a_{I_2}$ & ? $\subset H^{k-r}(\M_{g,1};H^1(\Sigma_{g,1};\Q)^{\otimes r})$ \\
$I_2=\{1,\cdots,r\}$& & \\
\hline
$\{r+1\}$, &  $K\otimes \Q[u_I:I\in P]a_P$ & $K\otimes \Q[u_{r+1},u_I:I\in P,|I|\ge2]a_P$\\
 $P\setminus \{r+1\}$ not as above& & $K\otimes u_{j_1}\Q[u_{r+1},u_{j_1},u_I:\{j_1\}\in P, I\in P,|I|\ge 2]a_P$ \\
 & & (if $\sum\limits_{|I|\ge 2} |I| <r-1)$ \\
 & & \vdots \\
 & & $K \otimes u_{j_1}\cdots u_{j_m} \Q[u_{r+1},u_{j_1},\cdots,u_{j_m},u_I:$\\
& & $:\{j_1\},\cdots \{j_m\}, I\in P, |I|\ge 2]a_P $\\
 & & ($m+\sum\limits_{I\in P,|I|\ge 2}|I|=r-1$)\\
 & & ? $\subset H^{k-r}(\M_{g,1};H^1(\Sigma_{g,1};\Q)^{\otimes r})$ \\
\hline
 & & (*Take the degree $k-2(|I_1|-1)$ part of polynomials.)\\
$r+1\in I_1, |I_1|\ge 2$ & $K\otimes \Q[u_I:I\in P] a_P$  & $K\otimes \Q[ u_I:I\in P,|I|\ge 2] \prod\limits_{I\in P, I\neq I_1} a_I)$ \\
 & & $K\otimes u_{j_1}\Q[ u_{j_1},u_I:\{j_1\}\in P, I\in P,|I|\ge 2] \prod\limits_{I\in P, I\neq I_1} a_I $\\
 & &  $K\otimes u_{j_1}\cdots u_{j_n}\Q[ u_{j_1},\cdots, u_{j_n},u_I:$\\
 & & $:\{j_1\},\cdots,\{j_n\},I\in P,|I|\ge 2] \prod\limits_{I\in P, I\neq I_1} a_I $\\
 & & ($n+\sum\limits_{I\in P, |I|\ge 2}|I|=r+1$) \\
 & & (*The above is equivalent to the degree $k$ part of\\
& &  the polynomial multipled by $u_{I_1}^{|I_1|-1}$,\\
 & & thus equivalent to polynomials whose last term is $a_P$.)\\
\hline
\end{longtable}
\normalsize

From Table 1, we conclude that the following map
$$H^\bullet(\M_{g};\Q)\otimes\Q[u_{r+1}]\otimes \left( \bigoplus_{P|[r]} (\prod_{\{i\}\in P}u_i) \Q[u_I:I\in P] a_P\right) \to H^{\bullet-r}(\M_{g,1};H^1(\Sigma_{g,1};\Q)^{\otimes r})$$
is an isomorphism of mixed Hodge structures in degrees $\le \frac{2}{3}(g-1)$.

Notice that via the map $f:\mathcal{C}_{g,r+1}\to \mathcal{C}_{g,1}$, the image of $H^\bullet(\mathcal{C}_{g,1};\Q)$ in $H^\bullet(\mathcal{C}_{g,r+1})$ is exactly
$$H^\bullet(\M_{g};\Q)\otimes\Q[u_{r+1}].$$
Thus we can rewrite the result as
\begin{equation}\label{mg1} H^{\bullet-r}(\M_{g,1};H^1(\Sigma_{g,1};\Q)^{\otimes r})\cong H^\bullet(\M_{g,1};\Q)\otimes \left( \bigoplus_{P|[r]} (\prod_{\{i\}\in P}u_i) \Q[u_I:I\in P] a_P\right)\end{equation} in degrees $\le \frac{2}{3}(g-1)$. We finish the induction.

The last step is getting the twisted cohomology of $\M_g^1$. By gluing a punctured disk to $\Sigma_g^1$, we obtain a map
$$1\to \mathbb{Z}\to \M_g^1\to \M_{g,1}\to 1.$$
The associated Gysin sequence (Proposition \ref{gy}) with coefficients $H^1(\Sigma_{g,1};\Q)^{\otimes r}$ is
$$\begin{aligned} \cdots \to H^{\bullet-r-2}(\M_{g,1};H^1(\Sigma_{g,1};\Q)^{\otimes r}) \to H^{\bullet-r}(\M_{g,1};H^1(\Sigma_{g,1};\Q)^{\otimes r}) \to \\
\to H^{\bullet-r}(\M_g^1;H^1(\Sigma_{g,1};\Q)^{\otimes r}) \to H^{\bullet-r-1}(\M_{g,1};H^1(\Sigma_{g,1};\Q)^{\otimes r}) \cdots \end{aligned}.$$
Here the map $$H^{\bullet-r-2}(\M_{g,1};H^1(\Sigma_{g,1};\Q)^{\otimes r}) \to H^{\bullet-r}(\M_{g,1};H^1(\Sigma_{g,1};\Q)^{\otimes r})$$ is the multiplication by the Euler class $e_1\in H^2(\M_{g,1};\Q)$, so it is injective. Thus the above long exact sequence splits into short exact sequences:
$$\begin{aligned} 0\to  H^{\bullet-r-2}(\M_{g,1};H^1(\Sigma_{g,1};\Q)^{\otimes r}) \xrightarrow{\cdot e_1}&  H^{\bullet-r}(\M_{g,1};H^1(\Sigma_{g,1};\Q)^{\otimes r}) \\ \to& H^{\bullet-r}(\M_g^1;H^1(\Sigma_{g,1};\Q)^{\otimes r}) \to 0.\end{aligned}$$
The first two nonzero terms can be obtained from (\ref{mg1}). Taking cokernel in the short exact sequence, we have
$$H^{\bullet-r}(\M_g^1;H^1(\Sigma_g^1;\Q)^{\otimes r})\cong H^\bullet(\M_{g};\Q)\otimes \left( \bigoplus_{P|[r]} (\prod_{\{i\}\in P}u_i) \Q[u_I:I\in P] a_P\right)$$ in degrees $\le \frac{2}{3}(g-1)$.
\end{proof}

\subsection{Extending the results to general $\Sigma_{g,p}^b$}
By combining the results of Looijenga and Kawazumi, we establish the following theorem on the twisted cohomology of $\M_{g,p}^b$ for $p+b\ge 1$:

\begin{thm}\label{thmA}
Let $g,p,b$ be positive integers such that $p+b\ge 1$. The $\Q$-vector space $A''^{\bullet}_r$ is defined in \eqref{arrr}. There is a graded $S_r$-equivariant map of $H^\bullet(\M_{g,p}^b;\Q)$-modules, which is also a morphism of mixed Hodge structures:
$$ H^\bullet(\M_{g,p}^b;\Q)\otimes A''^{\bullet}_r\to H^{\bullet-r}(\textup{Mod}_{g,p}^b;H^1(\Sigma_{g,p}^b;\Q)^{\otimes r}),$$
which is an isomorphism in degrees $\le \frac{2}{3}(g-1)$.
\end{thm}

\begin{remark}
One could derive this result by repeating the method used to prove Proposition \ref{prop3.4}. However, that approach requires lots of repetitive computations. Instead, we present a simplified proof using Kawazumi’s results.
\end{remark}

\begin{proof}
Applying Kawazumi’s Theorem \ref{ka} over $\Q$, we obtain for $b\ge 1$:
$$
H^{\bullet }(\M_{g,p}^b;H^1(\Sigma_{g,p}^b;\Q)^{\otimes r}) \cong H^\bullet (\M_{g,p}^b;\mathbb{Z})\otimes_{H^\bullet  (\M_g^1;\mathbb{Z})} H^\bullet(\M_g^1;H^1(\Sigma_g^1;\mathbb{Z})^{\otimes r}).$$
Using the expression of $H^\bullet(\M_g^1;H^1(\Sigma_g^1;\Q)^{\otimes r})$ in Proposition \ref{prop3.4}, we have 
\begin{equation}\label{eq3.2}\begin{aligned}& H^{\bullet-r}(\M_{g,p}^b;H^1(\Sigma_{g,p}^b;\Q)^{\otimes r})\\
\cong & H^\bullet(\M_{g,p}^b;\Q)  \otimes_{H^\bullet(\M_{g}^1;\Q)} \left( H^\bullet(\M_{g}^1;\Q)\otimes A''^{\bullet}_r \right) \\
\cong & H^\bullet(\M_{g,p}^b;\Q)\otimes A''^{\bullet}_r
\end{aligned}\end{equation}
in degrees $\le \frac{2}{3}(g-1)$.

The remaining case to consider is when $b=0$ and $p\ge 1$. We proceed by gluing a punctured disk to the boundary component of $\Sigma_{g,p-1}^1$, which gives rise to the exact sequence: 
$$1\to \mathbb{Z}\to \M_{g,p-1}^1\to \M_{g,p}\to 1.$$
Taking the Gysin sequence (Proposition \ref{gy}) with coefficients $H^1(\Sigma_{g,p-1}^1;\Q)^{\otimes r}\cong H^1(\Sigma_{g,p};\Q)^{\otimes r}$, we obtain:
$$\begin{aligned}
\cdots\to H^{\bullet-r-2}(\M_{g,p};H^1(\Sigma_{g,p};\Q)^{\otimes r})\to H^{\bullet -r}(\M_{g,p};H^1(\Sigma_{g,p};\Q)^{\otimes r}) \to \\
 H^{\bullet -r}(\M_{g,p-1}^1;H^1(\Sigma_{g,p-1}^1;\Q)^{\otimes r}) \to H^{\bullet -r-1}(\M_{g,p};H^1(\Sigma_{g,p};\Q)^{\otimes r})\to \cdots.
\end{aligned}$$
Here, the map $$H^{\bullet -r-2}(\M_{g,p};H^1(\Sigma_{g,p};\Q)^{\otimes r})\to H^{\bullet -r}(\M_{g,p};H^1(\Sigma_{g,p};\Q)^{\otimes r})$$ is left multiplication by the Euler class $e_p\in H^2(\M_{g,p};\Q)$, which is injective in degrees $\le \frac{2}{3}(g-1)$. This allows us to split the above sequence into short exact sequences:
$$\begin{aligned} 0\to  H^{\bullet-r-2}(\M_{g,p};H^1(\Sigma_{g,p};\Q)^{\otimes r}) \xrightarrow{\cdot e_p} &   H^{\bullet -r}(\M_{g,p};H^1(\Sigma_{g,p};\Q)^{\otimes r})\\ \to& H^{\bullet -r}(\M_{g,p-1}^1;H^1(\Sigma_{g,p-1}^1;\Q)^{\otimes r}) \to 0.\end{aligned}$$
From this, we conclude:
$$H^{\bullet -r}(\M_{g,p};H^1(\Sigma_{g,p};\Q)^{\otimes r}) \cong \Q[e_p]\otimes H^{\bullet -r}(\M_{g,p-1}^1;H^1(\Sigma_{g,p-1}^1;\Q)^{\otimes r}).$$
Using equation (\ref{eq3.2}), we obtain:
$$\begin{aligned}&H^{\bullet -r}(\M_{g,p};H^1(\Sigma_{g,p};\Q)^{\otimes r}) \\
\cong & \Q[e_p] \otimes H^\bullet(\M_{g,p-1}^1;\Q)\otimes A''^{\bullet}_r  \\
\cong & H^\bullet(\M_{g,p}^b;\Q) \otimes A''^{\bullet}_r
\end{aligned}$$
in degrees $\le \frac{2}{3}(g-1)$, where the last isomorphism follows from Theorem \ref{wuwu}. This completes the proof.
\end{proof}

\section{Moduli Spaces of Riemann Surfaces with Level Structures }

In this section, we study a moduli space denoted $\mathcal{C}_{g,r}(\ell)$ and compute its rational cohomology for sufficiently large genus $g$.

\subsection{The moduli space $\mathcal{M}_{g,r}(\ell)$}
Recall that the moduli space $\mathcal{M}_{g,r}(\ell)$ of Riemann surfaces homeomorphic to $\Sigma_{g,r}$ with a level-$\ell$ structure is defined as
$$\mathcal{M}_{g,r}(\ell) \defeq \textup{Teich}(\Sigma_{g,r})/ \M_{g,r}(\ell),$$ where $$\textup{Teich}(\Sigma_{g,p})=\{\text{complex structures on }\Sigma_{g,p}\}/\text{homotopy}$$ is the Teichm\"uller space of $\Sigma_{g,p}$. When $\ell \ge 3$, the action of $ \M_{g,r}(\ell)$ on $\textup{Teich}(\Sigma_{g,r})$ is free (see e.g.\ \cite[Sec.~7.1]{AndyPicard}). Consequently $\mathcal{M}_{g,r}(\ell)$ is a quasi-projective variety whose cohomology coincides with the group cohomology of $\M_{g,r}(\ell)$. 

For $\ell=2$, the moduli space $\mathcal{M}_{g,r}(2)$ is an orbifold, but it admits a finite branched cover (e.g.\ $\mathcal{M}_{g,r}(4)$) that is a quasi-projective variety.

A more explicit description of $\mathcal{M}_{g,r}(\ell)$ is given by:
$$\left\{(C,x_1,x_2,\cdots,x_r,h)\middle|\begin{gathered}C \text{: closed Riemann surface of genus }g, \text{ }x_i \text{: distinct points on }C,\\ h: H_1(C\backslash  \{x_1,\cdots,x_r\};\mathbb{Z}/\ell)\to H_1(\Sigma_{g,r};\mathbb{Z}/\ell)   \text{ an isomorphism} \end{gathered}\right\}/\sim ,$$
where $$(C,x_1,x_2,\cdots,x_r,h)\sim (C',x'_1,x'_2,\cdots,x'_r,h')$$ if there exists a biholomorphism $f:C\to C'$ such that $f(x_i)=x_i'$ for $1\le i \le r$ and the induced map $f_*$ on $H_1(-;\mathbb{Z}/\ell)$ satisfies $ h' \circ f_* =h$.

To define $\mathcal{C}_{g,r}(\ell)$ as a moduli space containing $\mathcal{M}_{g,r}(\ell)$, we introduce an alternative perspective on $\mathcal{M}_{g,r}(\ell)$. Consider the map $\pi^{(r+1)}: \mathcal{M}_{g,r+1}(\ell) \to \mathcal{M}_{g,1}(\ell)$ defined by 
$$\pi^{(r+1)}:(C,x_1,\cdots,x_{r+1},h) \mapsto (C,x_1, \widehat{h}),$$
where $\widehat{h}: H_1(C\backslash  \{x_1\};\mathbb{Z}/\ell)\to H_1(\Sigma_{g,1};\mathbb{Z}/\ell)$ is the map $p_2\circ h \circ s$ as shown in the diagram:
$$\begin{aligned}\xymatrix{ H_1(C\backslash  \{x_1,\cdots,x_{r+1}\};\mathbb{Z}/\ell) \ar[r]^(0.6)h \ar[d]^{p_1} & H_1(\Sigma_{g,r+1};\mathbb{Z}/\ell) \ar[d]^{p_2} \\
H_1(C\backslash  \{x_1\};\mathbb{Z}/\ell) \ar@{-->}@/^1pc/[u]^s & H_1(\Sigma_{g,1};\mathbb{Z}/\ell) 
}\end{aligned},$$
where the vertical maps $p_1, p_2$ are obtained by forgetting all but the first marked points, while $s$ is an arbitrary section of $p_1$. The definition of $\widehat{h}$ is independent of the choice of the section $s$, since the homology class of a loop around the $i$-th puncture ($2\le i\le r+1$) maps to $0$ in $H_1(\Sigma_{g,1};\mathbb{Z}/\ell)$. It can be verified directly that $\pi^{(r+1)}: \mathcal{M}_{g,r+1}(\ell) \to \mathcal{M}_{g,1}(\ell)$ is well-defined. 

Let $\mathcal{D}=H_1(\Sigma_{g,1};\mathbb{Z}/\ell)$. Denote by $\widetilde{\Sigma_{g,1}}_{(\mathcal{D})}$  the regular $\mathcal{D}$-cover of $\Sigma_{g,1}$ associated with the homomorphism
$$\pi_1(\Sigma_{g,1})\to H_1(\Sigma_{g,1};\mathbb{Z}/\ell).$$ The fiber of the map $\pi^{(r+1)}$ can be interpreted as follows:

\begin{prop}\label{orbit}
The fiber of the map $\pi^{(r+1)}: \mathcal{M}_{g,r+1}(\ell) \to \mathcal{M}_{g,1}(\ell)$ is isomorphic to the \textbf{orbit configuration space}
 $$Conf^{\mathcal{D}}_{r}(\widetilde{\Sigma_{g,1}}_{(\mathcal{D})})=\{(y_2,y_3,\cdots,y_{r+1})|y_i\in \widetilde{\Sigma_{g,1}}_{(\mathcal{D})}; y_i \neq d\cdot y_j \text{ for all } d\in \mathcal{D},i\neq j  \},$$
which is the space of $r$ ordered points in $\widetilde{\Sigma_{g,1}}_{(\mathcal{D})}$ in different $\mathcal{D}$-orbits.
\end{prop}
\begin{proof}
We prove this by induction on $r$.

For $r=1$, the orbit configuration space $Conf^{\mathcal{D}}_{1} (\widetilde{\Sigma_{g,1}}_{(\mathcal{D})})$ is $\widetilde{\Sigma_{g,1}}_{(\mathcal{D})}$. For a given element in $\mathcal{M}_{g,1}(\ell) $: $$\left(C,x_1,h:H_1(C\backslash  \{x_1\};\mathbb{Z}/\ell)\to H_1(\Sigma_{g,1};\mathbb{Z}/\ell)\right),$$
its preimage under $\pi^{(2)}:\mathcal{M}_{g,2}(\ell)\to \mathcal{M}_{g,1}(\ell)$ consists of elements
$$\left(C,x_1,x_2,\widetilde{h}:H_1(C\backslash  \{x_1,x_2\};\mathbb{Z}/\ell)\to H_1(\Sigma_{g,2};\mathbb{Z}/\ell)\right)\in\mathcal{M}_{g,2}(\ell),$$
where $x_2\in C\backslash  \{x_1\}$ and $p_2 \circ \widetilde{h}\circ s=h$. For two such elements $(C,x_1,x_2,\widetilde{h})$ and $(C,x_1,x_2,\widetilde{h}')$, they differ by 
$$\widetilde{h}^{-1}\circ \widetilde{h}': H_1(C\backslash  \{x_1,x_2\};\mathbb{Z}/\ell) \to H_1(C\backslash  \{x_1,x_2\};\mathbb{Z}/\ell).$$
Fix a symplectic basis $\alpha_1,\beta_1,\cdots,\alpha_g,\beta_g$ of $H_1(C\backslash  \{x_1\};\mathbb{Z}/\ell)$, and let $\delta\in H_1(C\backslash  \{x_1,x_2\};\mathbb{Z}/\ell)$ denotes the homology class of the loop around the $x_2$. Then the matrix corresponding to $\widetilde{h}^{-1} \circ \widetilde{h}'$ under the basis $\alpha_1,\beta_1,\cdots,\alpha_g,\beta_g,\delta$ is
$$\begin{bmatrix}
I_{2g\times 2g} & 0_{2g \times 1} \\
\vec{v} & 0_{1\times 1}
\end{bmatrix},$$
where $\vec v\in H_1(C\backslash  \{x_1\};\mathbb{Z}/\ell)\cong \mathcal{D}$. A point $x_2\in C\backslash  \{x_1\}$ and a deck transformation $\vec v \in \mathcal{D}$ together determine a point $y_2$ in the regular $\mathcal{D}$-cover of $C\backslash\{x_1\}$. Thus the fiber of $\pi^{(2)}$ is homeomorphic to $\widetilde{\Sigma_{g,1}}_{(\mathcal{D})}$. We remark that the long exact sequence of homotopy groups associated to the fibration $\widetilde{\Sigma_{g,1}}_{(\mathcal{D})} \hookrightarrow \mathcal{M}_{g,2}(\ell)\to \mathcal{M}_{g,1}(\ell)$ when $\ell \ge 3$ is
$$0\to  \pi_1(\widetilde{\Sigma_{g,1}}_{(\mathcal{D})}) \to \M_{g,2}(\ell) \to \M_{g,1}(\ell)\to 0,$$
which is exactly the mod-$\ell$ Birman exact sequence.

For $r\ge 2$, assume that the fiber of $\pi^{(r)}: \mathcal{M}_{g,r}(\ell) \to \mathcal{M}_{g,1}(\ell)$ is 
$$Conf^{\mathcal{D}}_{r-1}(\widetilde{\Sigma_{g,1}}_{(\mathcal{D})})=\{(y_2,y_3,\cdots,y_{r})|y_i\in \widetilde{\Sigma_{g,1}}_{(\mathcal{D})};\text{if } i\neq j, \forall d\in \mathcal{D},  y_i \neq d\cdot y_j \}.$$
Our goal is to find the fiber of $\pi^{(r+1)}: \mathcal{M}_{g,r+1}(\ell) \to \mathcal{M}_{g,1}(\ell)$. Notice that by definition $\pi^{(r+1)}$ is the composition map $\pi^{(r)} \circ \widetilde{\pi}$, where $ \widetilde{\pi}: \mathcal{M}_{g,r+1}(\ell) \to \mathcal{M}_{g,r}(\ell)$ is defined in a similar way to $\pi^{(r+1)}$ by forgetting the marked point $x_{r+1}$ and projecting $h: H_1(C\backslash  \{x_1,\cdots,x_{r+1}\};\mathbb{Z}/\ell) \to H_1(\Sigma_{g,r+1};\mathbb{Z}/\ell)$ to $\widehat{h}:H_1(C\backslash  \{x_1,\cdots,x_{r}\};\mathbb{Z}/\ell) \to H_1(\Sigma_{g,r};\mathbb{Z}/\ell)$. By the same argument as $r=1$, the fiber of $\widetilde{\pi}$ is the regular $\mathcal{D}$-cover of $\Sigma_{g,r}$, which we denote by $\widetilde{\Sigma_{g,r}}_{(\mathcal{D})}$. Therefore the fiber of the composition $\pi^{(r)} \circ \widetilde{\pi}$ is 
$$\begin{aligned} & \{(y_1,y_2,\cdots,y_{r-1})\in Conf^{\mathcal{D}}_{r-1} (\widetilde{\Sigma_{g,1}}_{(\mathcal{D})}), y_r\in \widetilde{\Sigma_{g,r}}_{(\mathcal{D})}\},   \end{aligned}$$
which is exactly $Conf^{\mathcal{D}}_{r}(\widetilde{\Sigma_{g,1}}_{(\mathcal{D})})$. This completes the induction.
\end{proof}

Since the fiber of $\mathcal{M}_{g,r+1}(\ell)\to \mathcal{M}_{g,1}(\ell)$ can be described as an orbit configuration space of $r$ points on the regular $\mathcal{D}$-cover of $\Sigma_{g,1}$, we can express $\mathcal{M}_{g,r+1}(\ell)$ as follows:
\begin{cor}\label{cor4.2}
The moduli space $\mathcal{M}_{g,r+1}(\ell)$ can be described as
$$\left\{\left(C,x_1,y_2,\cdots,y_{r+1},h\right)\middle|\begin{gathered}C \text{ closed genus-$g$ Riemann surface, }x_1\in C, y_2,\cdots,y_{r+1}\in \widetilde{C}_{(\mathcal{D})},\\
y_2,\cdots,y_{r+1}\text { project to distinct points in }C\setminus \{x_1\},\\
h:  H_1(C\backslash  \{x_1\};\mathbb{Z}/\ell)\to H_1(\Sigma_{g,1};\mathbb{Z}/\ell)  \text{ is an isomorphism}
\end{gathered}
\right\}/\sim,$$
where $\widetilde{C}_{(\mathcal{D})}$ is the regular $\mathcal{D}$-cover of $C$ associated with the homomorphism
$$\pi_1(C,x_1)\to  H_1(C\backslash  \{x_1\};\mathbb{Z}/\ell)\xrightarrow{h} H_1(\Sigma_{g,1};\mathbb{Z}/\ell)=\mathcal{D}.$$
Two tuples $(C,x_1,y_2,\cdots,y_{r+1},h)$ and $(C',x'_1,y'_2,\cdots,y'_{r+1},h')$ are equivalent if there exists a biholomorphism $f:C\to C'$  satisfying 
\begin{itemize}
\item $f(x_1)=x'_1$,
\item the induced map $f_*$ on $H_1(-;\mathbb{Z}/\ell)$ satisfies $f_* \circ h=h'$,
\item the unique lifting of $f$ with $\widetilde{f}(y_1)=y'_1$ satisfies $\widetilde{f}(y_i)=y'_i$ for $2\le i \le r+1$.
\end{itemize}
\end{cor}

\subsection{The moduli space $\mathcal{C}_{g,r}(\ell)$}
In corollary \ref{cor4.2}, if we relax the condition on the marked points $y_2,\cdots,y_{r+1}$ and allow these points to be located anywhere in the regular $\mathcal{D}$-cover of $\Sigma_g$, we obtain a larger moduli space that contains $\mathcal{M}_{g,r+1}(\ell)$ as an open subvariety provided $\ell \ge 3$. Let $\mathcal{C}_{g,r+1}(\ell)$ denote this moduli space, defined as:
$$\left\{(C,x_1,y_2,\cdots,y_{r+1},h)\middle|\begin{gathered}C \text{ closed genus-$g$ Riemann surface, }x_1\in C, \ y_2,\cdots,y_{r+1}\in \widetilde{C}_{(\mathcal{D})},\\h:  H_1(C\backslash  \{x_1\};\mathbb{Z}/\ell)\to H_1(\Sigma_{g,1};\mathbb{Z}/\ell)  \text{ is an isomorphism}\end{gathered}\right\}/\sim,$$\
where the equivalence relation is the same as that in $\mathcal{M}_{g,r+1}(\ell)$.

\begin{remark} The moduli space $\mathcal{C}_{g,r+1}(\ell)$ is strictly larger than the $r$-fold fiber product of $\pi^{(2)}:\mathcal{M}_{g,2}(\ell)\to \mathcal{M}_{g,1}(\ell)$. This is because the definition of $\mathcal{C}_{g,r+1}(\ell)$ permits the marked points $y_2,\cdots,y_{r+1}$ on $\widetilde{C}_{(\mathcal{D})}$ to project to the same point as $x_1$, which is not allowed in the fiber product.
\end{remark}

To describe subvarieties of $\mathcal{C}_{g,r}(\ell)$ with different conditions on the marked points $y_2,\cdots,y_r$ on the regular $\mathcal{D}$-cover of $\Sigma_g$, we introduce the following notion:
\begin{defn}\label{def1}
Fix a group $\mathcal{D}$. A set $\widetilde{P}=\{(S_1, \vec d_1),(S_2,\vec d_2),\cdots,(S_\nu,\vec d_\nu)\}$ is called a \textbf{$\mathcal{D}$-weighted partition} of the index set $[r]=\{1,2,\cdots,r\}$, if 
\begin{enumerate}
\item The set $\{S_1,S_2,\cdots,S_\nu\} $ is a partition of the set $\{1,2,\cdots,r\}$.
\item For each $1\le a \le \nu$, there is an order in $S_a=\{i_1<i_2<\cdots<i_{|S_a|}\}$.
\item For each $1\le a \le \nu$, the element $\vec d_a$ is a tuple $(d_a^{(1)},d_a^{(2)},\cdots, d_a^{(|S_a|-1)})$, with $d_a^{(i)}\in \mathcal{D}$. By convention, $\vec d_a$ is empty if $|S_a|=1$.
\end{enumerate}
We denote by $\mathcal{P}_r^{\mathcal{D}}$ the set of all $\mathcal{D}$-weighted partitions of the index set $\{1,2,\cdots,r\}$. For $\widetilde{I}=(S,\vec d)\in \widetilde{P}$, define $|\widetilde{I}|$ to be $|S|$.
\end{defn}

We then apply this notion to denote subvarieties of $\mathcal{C}_{g,r}(\ell)$ in the following way:
\begin{notation}
For a $\mathcal{D}$-weighted partition $\widetilde{P}=\{(S_1, \vec d_1),(S_2,\vec d_2),\cdots,(S_\nu,\vec d_\nu)\}$ of the index set $[r]=\{1,2,\cdots,r\}$, we denote by $\mathcal{M}(\widetilde{P})$ the subvariety of $\mathcal{C}_{g,r}(\ell)$ where the marked points $y_1$ (lifting of $x_1$), $y_2,\cdots, y_{r}$ satisfy
\begin{itemize}
\item points indexed by elements in the same $S_a$ lie in the same $\mathcal{D}$-orbit, while points indexed by different $S_a$'s do not;
\item for each $1\le a\le \nu$, write $S_a=\{i_1^{(a)}<\cdots<i^{(a)}_{|S_a|}\},$ and  $\vec d_a=(d_a^{(1)},\cdots, d_a^{(|S|-1)}),$ the points satisfy
$$ y_{i^{(a)}_j} =  d_a^{(j-1)}  \cdot y_{i_1^{(a)}}, \text{ for } 2\le j \le |S_a|.$$
\end{itemize}
\end{notation}

A $\mathcal{D}$-weighted partition encodes two pieces of information about the positions of the marked points on the  regular $\mathcal{D}$-cover of $\Sigma_g$. First, it specifies whether any of the points project to the same point on $C$. Second, if they do, it indicates how these points differ under the $\mathcal{D}$-action.

By definition, each $\mathcal{M}(\widetilde{P})$ can be identified with $\mathcal{M}_{g,|\widetilde{P}|}(\ell)$, where $|\widetilde{P}|=\nu$. Considering all possible configurations of the marked points, we obtain a natural stratification of $\mathcal{C}_{g,r}(\ell)$ as a disjoint union of these subvarieties:
\begin{equation}\label{stra}
\mathcal{C}_{g,r}(\ell)=\coprod\limits_{\widetilde{P}\in \mathcal{P}_{r}^{\mathcal{D}}} \mathcal{M}_{g,|\widetilde{P}|}(\ell),\end{equation} where $\mathcal{P}_{r}^{\mathcal{D}}$ denotes the set of all $\mathcal{D}$-weighted partitions of $[r]$.

\subsection{Rational cohomology of $\mathcal{C}_{g,r}(\ell)$}
The stratification \eqref{stra} of $\mathcal{C}_{g,r}(\ell)$ allows us to compute the rational cohomology of $\mathcal{C}_{g,r}(\ell)$.

First, let us introduce the following cohomology classes in $H^{\bullet}(\mathcal{C}_{g,r}(\ell);\Q)$:
\begin{enumerate}
\item For $1 \le i \le r$, let $$v_i\in H^2(\mathcal{C}_{g,r}(\ell);\Q)$$ be the first Chern class of $\theta_i=g_i^*(\theta)$, where $\theta$ is the relative tangent sheaf of the universal curve $\pi:\mathcal{M}_{g,1}\to \mathcal{M}_g$, and
$$g_i:\mathcal{C}_{g,r}(\ell)\to  \mathcal{M}_{g,1}(\ell) \to \mathcal{M}_{g,1}$$ is the composition map where the first map is
$$\begin{aligned}
\mathcal{C}_{g,r}(\ell)\to & \mathcal{M}_{g,1}(\ell) \\
\left(C,x_1,y_2,\cdots,y_{r},h\right) \mapsto & (C, x_1,h) \text{ if } i=1, \\
\left(C,x_1,y_2,\cdots,y_{r},h\right) \mapsto & (C, \overline{y_{i}},h) \text{ if } i\ge2.
\end{aligned}$$
Here $\overline{y_{i}}$ is the image of $y_{i}$ under the covering map $\widetilde{C}_{(\mathcal{D})}\to C$.
One can also view $v_i$ as the pullback of the Euler class $e_1\in H^2(\mathcal{M}_{g,1};\Q)$ via $g_i$. The first Chern class of $\theta_i$ restricted to the open subvariety $\mathcal{M}_{g,r+1}(\ell)$ is exactly the Euler class $$e_i\in H^2(\M_{g,r+1}(\ell);\Q)\cong H^2(\M_{g,r+1};\Q).$$ 
\item For a $\mathcal{D}$-weighted partition $\widetilde{P}$ of the index set $[r]$, and $\widetilde{I}=(S,\vec d)\in \widetilde{P}$ with $|S|\ge 2$, write 
$$S=\{i_1<i_2<\cdots<i_{|S|}\},$$  $$\vec d=(d^{(1)},d^{(2)},\cdots, d^{(|S|-1)}).$$ 
Let $$a_{\widetilde{I}}\in H^{2|\widetilde{I}|-2}(\mathcal{C}_{g,r}(\ell);\Q)$$ be the Poincar\'e dual of the subvariety of $\mathcal{C}_{g,r}(\ell)$ whose points $y_1,y_2,\cdots y_{r}$ satisfy
\begin{equation}\label{condi}
y_{i_{j+1}}= d^{(j)}\cdot y_{i_1}, \text{ for }1\le j \le |S|-1.\end{equation}
Denote this subvariety by $\mathcal{C}_{g,r}(\ell)[\widetilde{I}]$.
\end{enumerate}

\begin{lem}The cohomology classes $v_i$ and $a_{\widetilde{I}}$ satisfy the relations
\begin{equation}\label{va1}
v_i a_{\widetilde{I}}=v_j a_{\widetilde{I}}, \text{ if } i,j\in S \text{ with }\widetilde{I}=(S,\vec d); 
\end{equation}
\begin{equation}\label{va2}
a_{\widetilde{I}} a_{\widetilde{J}} =v_i^{|\widetilde{I} \cap \widetilde{J}|-1} a_{\widetilde{I}\cup \widetilde{J}}, \text{ if } i\in S_0 \text{ with } \widetilde{I} \cap \widetilde{J}=(S_0,\vec d_0)\neq \emptyset;
\end{equation}
\begin{equation}\label{va3}
a_{\widetilde{I}} a_{\widetilde{J}}= 0 \text{ if conditions (\ref{condi}) for $\widetilde{I}$ and $\widetilde{J}$ contradict.} 
\end{equation}
\end{lem}

\begin{proof}
First, for any $\widetilde{I}=(S,\vec d)\in \widetilde{P}$, if $i,j\in S$, then since $\theta_i=g_i^*(\theta)$ and $\theta_j=g_j^*(\theta)$ have isomorphic restrictions to $\mathcal{C}_{g,r}(\ell)[\widetilde{I}]$, we obtain the relation \eqref{va1}.

Next, consider two elements $\widetilde{I}=(S,\vec d)$ and $\widetilde{J}=(T,\vec p)$, the intersection of their corresponding subvarieties, $\mathcal{C}_{g,r+1}(\ell)[\widetilde{I}]\cap\mathcal{C}_{g,r+1}(\ell)[\widetilde{J}]$, falls into one of the following cases:
\begin{itemize}
\item If $S\cap T=\emptyset$, the intersection $\mathcal{C}_{g,r+1}(\ell)[\widetilde{I}]\cap \mathcal{C}_{g,r+1}(\ell)[\widetilde{J}]$ is a non-empty closed subvariety of $\mathcal{C}_{g,r+1}(\ell)$ that combines the conditions (\ref{condi}) imposed by $\widetilde{I}$ and $\widetilde{J}$. In this case, no additional relations hold between $a_{\widetilde{I}}$ and $a_{\widetilde{J}}$.
\item If $S\cap T\neq \emptyset$ and the conditions (\ref{condi}) imposed by $\widetilde{I}$ and $\widetilde{J}$ are compatible, we define the union of $\widetilde{I}$ and $\widetilde{J}$ as
$$\widetilde{I} \cup \widetilde{J}\defeq (S\cup T, \vec d \cup \vec p).$$
In this case, we have
$$ \mathcal{C}_{g,r+1}(\ell)[\widetilde{I}]\cap \mathcal{C}_{g,r+1}(\ell)[\widetilde{J}]=\mathcal{C}_{g,r+1}(\ell)[\widetilde{I} \cup \widetilde{J}].$$
Applying Lemma 2.4 from Looijenga \cite{LooiStable}, we obtain the relation (\ref{va2}).
\item If $S\cap T\neq \emptyset$ but the conditions \ref{condi} imposed by $\widetilde{I}$ and $\widetilde{J}$ are inconsistent, then $$\mathcal{C}_{g,r+1}(\ell)[\widetilde{I}]\cap \mathcal{C}_{g,r+1}(\ell)[\widetilde{J}]=\emptyset.$$ Consequently, we obtain the vanishing relation (\ref{va3}).
\end{itemize}
This completes the proof.
\end{proof}

\begin{ex}\label{ex4.5} Let us look at some examples of these relations.
\begin{enumerate}
\item If $\widetilde{I}=(\{1<2\},d)$, then $1,2 \in \widetilde{I}$ and the first relation in the above lemma applies:
$$ v_1 a_{(\{1<2\},d)}= v_2 a_{(\{1<2\},d)}.$$
\item If $\widetilde{I}=(\{1<2\},d)$ and $\widetilde{J}=(\{1<3\},d')$, then $\widetilde{I}\cup \widetilde{J}=(\{1<2<3\},(d,d'))$, because $$\mathcal{C}_{g,r+1}(\ell)[\widetilde{I}]\cap \mathcal{C}_{g,r+1}(\ell)[\widetilde{J}]=\mathcal{C}_{g,r+1}(\ell)[\widetilde{I}\cup \widetilde{J}].$$
The intersection is $\widetilde{I}\cap \widetilde{J}=(\{1\})$. The second relation above applies:
$$a_{(\{1<2\},d)}\cdot a_{(\{1<3\},d')}=1\cdot a_{(\{1<2<3\},(d,d'))}.$$
\item If $\widetilde{I}=(\{1<2\},d)$ and $\widetilde{J}=(\{1<2\},d')$ with $d\neq d'$, then we can see 
$$ \mathcal{C}_{g,r+1}(\ell)[\widetilde{I}]\cap \mathcal{C}_{g,r+1}(\ell)[\widetilde{J}]=\emptyset,$$
since you can not simultaneously require that the second marked point is $d\cdot x_1$ and $d'\cdot x_1$. In this case we should have $\widetilde{I}\cap  \widetilde{J}=\emptyset$ and we do not have the second relation above for $a_{\widetilde{I}}$ and $a_{\widetilde{J}}$. Instead, since $\mathcal{C}_{g,r+1}(\ell)[\widetilde{I}]$ and $\mathcal{C}_{g,r+1}(\ell)[\widetilde{J}]$ are disjoint, we have
$$ a_{(\{1<2\},d)} a_{(\{1<2\},d')}=0, \text{ if } d\neq d'.$$
\end{enumerate}
\end{ex}

Now we state the result of the rational cohomology of $\mathcal{C}_{g,r}(\ell)$.
\begin{thm}\label{coh2}
Let $v_i,a_{\widetilde{I}}$ be as defined above. Define $A_{r}(\ell)^{\bullet}$ as the graded commutative $\Q$-module generated by all $v_i$ and $a_{\widetilde{I}}$, subject to the relations (\ref{va1}), (\ref{va2}), and (\ref{va3}).			
Then, there exists an algebra homomorphism:
$$H^\bullet(\M_{g}(\ell);\Q)\otimes A_{r}(\ell)^{\bullet}  \to H^\bullet (\mathcal{C}_{g,r}(\ell);\mathbb{Q})$$
which is an isomorphism in degrees $k$ such that $g\ge 2k^2+7k+2$. 
\end{thm}

\begin{remark}
To express $A_{r}(\ell)^{\bullet}$ explicitly as a vector space, following Looijenga's approach in Remark \ref{rmk1}, we introduce the following notation:
\begin{itemize}
\item Given a $\mathcal{D}$-weighted partition $\widetilde{P}$ of the index set $[r]$, define:
$$a_{\widetilde{P}}=\prod\limits_{\widetilde{I}\in \widetilde{P},|\widetilde{I}|\ge 2} a_{\widetilde{I}}.$$
By relations (\ref{va2}) and (\ref{va3}), the module $A_{r}(\ell)^{\bullet}$ can be viewed as a $\Q[v_i:1\le i \le r]$-module generated by all $a_{\widetilde{P}}$, subject to the relation (\ref{va1}).
\item Given a $\mathcal{D}$-weighted partition $\widetilde{P}$ of $[r]$, define an equivalence relation on the set $\{v_1,\cdots,v_r\}$ by
$$v_i\sim v_j \text{ if and only if }i,j\in S \text{ for some } \widetilde{I}=(S,\vec d)\in \widetilde{P}.$$
Let $v_{\widetilde{I}}$ denote the equivalence class of $v_i$ for any $i\in S$ with $\widetilde{I}=(S,\vec d)\in \widetilde{P}$. In particular, when $\widetilde{I}=(\{i\},\emptyset)$, we have $v_I=v_i$.
\end{itemize}
Using this notation, the vector space $A_{r}(\ell)^{\bullet}$ admits the decomposition:
\begin{equation}\label{arl} A_{r}(\ell)^{\bullet}=\bigoplus_{\widetilde{P}\in \mathcal{P}_{r}^{\mathcal{D}}} \Q[v_{\widetilde{I}}:\widetilde{I}\in \widetilde{P}]a_{\widetilde{P}}.\end{equation}
\end{remark}

\begin{remark}\label{Cmix}
Fix the grading of the generators as follows:
\begin{itemize}
\item $v_i$ has degree $2$,
\item $a_{\widetilde{I}}$ has degree $2|\widetilde{I}|-2$. 
\end{itemize}
Then $A_{r}(\ell)^\bullet$ decomposes as
$$A_{r}(\ell)^\bullet=\bigoplus\limits_{m=0}^g  A_{r}(\ell)^{2m}$$ where $ A_{r}(\ell)^{2m}$ denotes the degree $2m$ part. This grading induces a mixed Hodge structure on $A_{r}(\ell)^\bullet$, in which $ A_{r}(\ell)^{2m}$ has Hodge type $(m,m)$.

 By Putman's Theorem \ref{andy0}, we have an isomorphism
$$H^\bullet(\M_{g}(\ell);\Q)\cong H^\bullet(\M_{g};\Q)$$ 
in degrees $k$ such that $g\ge 2k^2+7k+2$. Consequently, $H^\bullet(\M_{g}(\ell);\Q)$ inherits a canonical mixed Hodge structure as described in \cite[Sec.~2.5]{LooiStable}. 

Moreover, by Theorem \ref{polar}, the rational cohomolgy of $\mathcal{C}_{g,r}(\ell)$ carries a canonical polarizable mixed Hodge structure. In Theorem \ref{coh2}, the map 
$$H^\bullet(\M_{g}(\ell);\Q)\otimes A_{r}(\ell)^{\bullet}  \to H^\bullet (\mathcal{C}_{g,r}(\ell);\mathbb{Q})$$ 
is a morphism of mixed Hodge structures. 
\end{remark}

\begin{proof}[\rm\bf{Proof of Theorem \ref{coh2}}]
We compute $H^\bullet(\mathcal{C}_{g,r}(\ell);\Q)$ based on the stratification \eqref{stra}.

For integers $k\ge 0$, let $U_k$ be the union of the strata $\mathcal{M}_{g,|\widetilde{P}|}$ of codimension $\le 2k$, and let $S_k$ be the union of the strata $\mathcal{M}_{g,|\widetilde{P}|}$ of codimension $ 2k$. We prove by induction on $k$ that 
$$H^\bullet(\M_{g}(\ell);\Q)\otimes  \bigoplus_{\widetilde{P}\in \mathcal{P}_{r}^{\mathcal{D}}, r-|\widetilde{P}|\le k} \Q[v_{\widetilde{I}}:\widetilde{I}\in \widetilde{P}]a_{\widetilde{P}}\to H^\bullet(U_k;\Q)$$
is an isomorphism in degrees $k$ such that $g\ge 2k^2+7k+2$. 

The final case $k=r$ is what the theorem statement is since $U_{r}=\mathcal{C}_{g,r}(\ell)$.

In the base case $k=0$, the space $U_0$ is exactly $\mathcal{M}_{g,r}(\ell)$ with the weighted partition being $\widetilde{P}=\{\{1\},\{2\},\cdots,\{r\}\}.$
In degrees $k$ such that $g\ge 2k^2+7k+2$, by Theorem \ref{andy0} we have 
$$H^{\bullet}(\M_{g}(\ell);\Q)\otimes \Q[v_1,\cdots,v_{r}] \xrightarrow{\cong} H^\bullet(U_0;\Q)\cong H^{\bullet}(\M_{g};\Q)\otimes \Q[e_1,\cdots,e_{r}].$$
For general $k$, suppose the map 
\begin{equation}\label{rminus} H^\bullet(\M_{g}(\ell);\Q)\otimes  \bigoplus_{\widetilde{P}\in \mathcal{P}_{r}^{\mathcal{D}}, r-|\widetilde{P}|\le k-1} \Q[v_{\widetilde{I}}:\widetilde{I}\in \widetilde{P}]a_{\widetilde{P}}\to H^\bullet(U_{k-1};\Q)\end{equation} is an isomorphism in degrees $k$ such that $g\ge 2k^2+7k+2$.
Notice that $U_{k-1}$ is an open subvariety of $U_k$, whose complement $S_k$ has codimension $2k$. The corresponding Thom-Gysin sequence (Proposition \ref{tg}) is
$$\cdots\to H^{\bullet-2k}(S_k;\Q)\to H^\bullet(U_k;\Q)\to H^\bullet(U_{k-1};\Q)\to H^{\bullet+1}(U_k;\Q)\to \cdots.$$
Here the map $H^\bullet(U_k;\Q)\to H^\bullet(U_{k-1};\Q)$ is surjective since the map (\ref{rminus}) factorizes over $H^\bullet(U_k;\Q)$. Thus we have a splitting short exact sequence
$$0\to H^{\bullet-2k}(S_k;\Q)\to H^\bullet(U_k;\Q)\to H^\bullet(U_{k-1};\Q)\to 0.$$
Recall $$S_k=\coprod\limits_{\widetilde{P}\in \mathcal{P}_{r+1}^{\mathcal{D}},|\widetilde{P}|=r-k} \mathcal{M}_{g,|\widetilde{P}|}(\ell).$$
We know $H^\bullet(\mathcal{M}_{g,|\widetilde{P}|}(\ell);\Q)$ from Theorem \ref{andy0}, and the Gysin map $H^{\bullet-2k}(S_k;\Q)\to H^\bullet(U_k;\Q)$ restricted to the component $H^\bullet(\mathcal{M}_{g,|\widetilde{P}|}(\ell);\Q)$ is the multiplication by $a_{\widetilde{P}}\in H^{2k}(U_k;\Q)$. Thus we conclude that in degrees $k$ such that $g\ge 2k^2+7k+2$ we have
$$\begin{aligned} H^\bullet(U_{k};\Q)\cong& H^\bullet(U_{k-1};\Q)\oplus H^{\bullet-2k}(S_k;\Q) \\
\cong & H^\bullet(\M_{g}(\ell);\Q)\otimes  \bigoplus_{\widetilde{P}\in \mathcal{P}_{r}^{\mathcal{D}}, r-|\widetilde{P}|\le k-1} \Q[v_{\widetilde{I}}:\widetilde{I}\in \widetilde{P}]a_{\widetilde{P}} \\
& \oplus H^\bullet(\M_{g}(\ell);\Q)\otimes  \bigoplus_{\widetilde{P}\in \mathcal{P}_{r}^{\mathcal{D}}, r-|\widetilde{P}|= k} \Q[v_{\widetilde{I}}:\widetilde{I}\in \widetilde{P}]a_{\widetilde{P}} \\
\cong& H^\bullet(\M_{g}(\ell);\Q)\otimes  \bigoplus_{\widetilde{P}\in \mathcal{P}_{r}^{\mathcal{D}}, r-|\widetilde{P}|\le k} \Q[v_{\widetilde{I}}:\widetilde{I}\in \widetilde{P}]a_{\widetilde{P}}
.\end{aligned}$$
This finishes the induction.
\end{proof}

\section{Twisted Cohomology of the Level-$l$ Mapping Class Groups}

In this section, we prove Theorem \ref{main2} by embedding the twisted cohomology of the level-$\ell$ mapping class group, with coefficients in the $r$-tensor power of Prym representation, into $H^\bullet(\mathcal{C}_{g,r}(\ell);\Q)$. 

From Theorem \ref{coh2}, we recall that the rational cohomology of $\mathcal{C}_{g,r}(\ell)$ is described using an algebra $A_r(\ell)^{\bullet}$, explicitly given by \eqref{arl}.  To proceed, we define a $\Q$-subspace of $A_r(\ell)^{\bullet}$ as
\begin{equation}\label{mainsubspace}
A'_r(\ell)^{\bullet}=\bigoplus_{\widetilde{P}\in\mathcal{P}_r^{\mathcal{D}}}(\prod_{\{i\}\in \widetilde{P}} v_i) \Q[v_{\widetilde{I}}:\widetilde{I}\in \widetilde{P}]a_{\widetilde{P}},
\end{equation}
where $\mathcal{P}_r^{\mathcal{D}}$ is the set of all $\mathcal{D}$-weighted partition of the index set $[r]$ (see Definition \ref{def1}). 

The symmetric group $S_r$ naturally acts on $A'_r(\ell)^{\bullet}$ by permuting the indices of $[r]$. Furthermore, $A'_r(\ell)^{\bullet}$ inherits a mixed Hodge substructure from the mixed Hodge structure of $A_r(\ell)^{\bullet}$ in Remark \ref{Cmix}.

\subsection{Case of $\M_{g,1}(\ell)$} We first analyze the case of $\M_{g,1}(\ell)$. 
\begin{thm}\label{case1}
There is a graded $S_r$-equivariant map of $H^{\bullet}(\M_{g,1}(\ell);\mathbb{Q})$-modules, which is also a morphism of mixed Hodge structures:
$$H^\bullet (\M_{g,1}(\ell);\Q)\otimes A'_r(\ell)^{\bullet} \to H^{\bullet-r}(\M_{g,1}(\ell);\fH_{g,1}(\ell;\Q)^{\otimes r})$$
which is an isomorphism in degrees $k$ such that $g\ge 2k^2+7k+2$.
\end{thm}

Recall that $\mathcal{D}=H_1(\Sigma_g;\mathbb{Z}/\ell)$ and $\fH_{g,1}(\ell;\Q)=H^1(\widetilde{\Sigma_{g,1}}_{(\mathcal{D})};\Q)$ where $\widetilde{\Sigma_{g,1}}_{(\mathcal{D})}$ denotes the regular $\mathcal{D}$-cover of $\Sigma_{g,1}$. Let $\widetilde{\Sigma_{g}}_{(\mathcal{D})}$ denote the regular $\mathcal{D}$-cover of $\Sigma_{g}$.

To compute $H^{\bullet}(\M_{g,1}(\ell);\fH_{g,1}(\ell;\Q)^{\otimes r})$, we also need to consider twisted cohomology with coefficients in $$H^1(\widetilde{\Sigma_g}_{(\mathcal{D})};\Q)^{\otimes r}, H^1(\widetilde{\Sigma_g}_{(\mathcal{D})};\Q)^{\otimes r-1}\otimes \fH_{g,1}(\ell;\Q),$$ and similar mixed tensor products. To handle these cases systematically, we introduce the following indexing notation:
\begin{defn}
Define $f(1)=\fH_{g,1}(\ell;\Q)$ and $f(0)=H^1(\widetilde{\Sigma_g}_{(\mathcal{D})};\Q)$. For any $r\ge 1$, let $J=(J_1,J_2,\cdots,J_r)$ be a sequence where $J_i\in \{0,1\}$ for all $i$. Then, we define the corresponding $r$-tensor product:
$$ \fH^r (J) \defeq f(J_1)\otimes f(J_2) \otimes \cdots  \otimes f(J_r).$$ 
\end{defn}

This notation allows us to track different tensor products involving $H^1(\widetilde{\Sigma_g}_{(\mathcal{D})};\Q)$ and $\fH_{g,1}(\ell;\Q)$ in a structured manner.

To describe the cohomology of $\M_{g,1}(\ell)$ with coefficients in $\fH^r (J) $, we define when a $\mathcal{D}$-weighted partition is compatible with $J$:

\begin{defn}\label{compa}
Given $\mathcal{D}$, $r$, $J$, $\fH^r(J)$ as above, a $\mathcal{D}$-weighted partition $$\widetilde{P}=\{(S_1, \vec d_1),(S_2,\vec d_2),\cdots,(S_\nu,\vec d_\nu)\}$$ indexed by $[r+1]$ is said to be \textbf{compatible with $J$} if:
\begin{enumerate}
\item By convention, we assume $1\in S_1$. 
\item For $\vec d_1=(d_1^{(1)},d_1^{(2)},\cdots,d_1^{(|S_1|-1)})$, each $d_1^{(i)}$ ($1\le i \le  |S_1|-1$) is not the unit $1$ in $\mathcal{D}$. 
\item $S_1$ does not contain any $2\le a\le r+1$ such that $J_{a-1}=1$.
\end{enumerate}
We denote by $\mathcal{P}^{\mathcal{D}}_{r+1}(J)$ the set of all $\mathcal{D}$-weighted partitions indexed by $[r+1]$ which are compatible with $J$.
\end{defn}

Using this notation, the twisted cohomology $H^{\bullet}(\M_{g,1}(\ell);\fH^r(J))$ is described as follows:
\begin{thm}\label{gcase}
There is a graded map of $H^{\bullet}(\M_{g}(\ell);\mathbb{Q})$-modules:
$$ H^\bullet (\M_{g}(\ell);\Q)\otimes \left( \bigoplus_{\widetilde{P}\in \mathcal{P}^{\mathcal{D}}_{r+1}(J)} ( \prod_{\{i\}\in \widetilde{P},i\neq 1} v_i )\Q[v_{\widetilde{I}}:\widetilde{I}\in \widetilde{P}] a_{\widetilde{P}}\right)\to H^{\bullet-r}(\M_{g,1}(\ell);\fH^r(J))$$ 
which is an isomorphism in degrees $k$ such that $g \ge 2k^2+7k+2$. 
\end{thm}

\begin{remark}
Theorem \ref{case1} follows as a special case of Theorem \ref{gcase} when $J=(1,1,\cdots,1)$. By Definition \ref{compa}, a $\mathcal{D}$-weighted partition indexed by $[r+1]$ that is compatible with $(1,1,\cdots,1)$ takes the form
$$\widetilde{P}=\{(S_1, \vec d_1)=(\{1\},\emptyset),(S_2,\vec d_2),\cdots,(S_\nu,\vec d_\nu)\}.$$
This is in one-to-one correspondence with a $\mathcal{D}$-weighted partition indexed by $[r]$:
$$\widetilde{P}'=\{(S_2,\vec d_2),\cdots,(S_\nu,\vec d_\nu)\}.$$
Thus, when $J=(1,1,\cdots,1)$, we obtain
$$\begin{aligned} &H^\bullet (\M_{g}(\ell);\Q)\otimes \left(\bigoplus_{\widetilde{P}\in \mathcal{P}^{\mathcal{D}}_{r+1}(J)} ( \prod_{\{i\}\in \widetilde{P},i\neq 1} v_i )\Q[v_{\widetilde{I}}:\widetilde{I}\in \widetilde{P}] a_{\widetilde{P}} \right)\\
\cong & H^\bullet (\M_{g}(\ell);\Q)\otimes \left( \bigoplus_{\widetilde{P}'\in \mathcal{P}^{\mathcal{D}}_{r}} ( \prod_{\{i\}\in \widetilde{P}'} v_i )\Q[v_1,v_{\widetilde{I}}:\widetilde{I}\in \widetilde{P}'] a_{\widetilde{P}'} \right) \\
\cong & H^\bullet (\M_{g}(\ell);\Q)\otimes \Q[v_1] \otimes  \left(\bigoplus_{\widetilde{P}'\in \mathcal{P}^{\mathcal{D}}_{r}} ( \prod_{\{i\}\in \widetilde{P}'} v_i )\Q[v_{\widetilde{I}}:\widetilde{I}\in \widetilde{P}'] a_{\widetilde{P}'} \right) \\
\cong & H^\bullet (\M_{g,1}(\ell);\Q) \otimes  \left(\bigoplus_{\widetilde{P}'\in \mathcal{P}^{\mathcal{D}}_{r}} ( \prod_{\{i\}\in \widetilde{P}'} v_i )\Q[v_{\widetilde{I}}:\widetilde{I}\in \widetilde{P}'] a_{\widetilde{P}'} \right).
\end{aligned}$$
\end{remark}

Now it suffices to prove Theorem \ref{gcase}. We begin with proving the case $J=(0,0,\cdots,0)$, where the coefficients $\fH^r(J)=H^1( \widetilde{\Sigma_g}_{(\mathcal{D})};\Q)^{\otimes r}$:
\begin{proof}[\rm\bf{Proof of Theorem \ref{gcase} when $J=(0,0,\cdots,0)$}]
We proceed by induction on $r$. For $r=0$, the statement is 
$$H^\bullet (\M_{g,1}(\ell);\Q) \cong H^\bullet (\M_{g}(\ell);\Q) \otimes \Q[v_1] ,$$
which follows from Putman's Theorem \ref{andy0} and Looijenga's Theorem \ref{wuwu}.

Let $r\ge 1$. In the previous section, we studied the moduli space $\mathcal{C}_{g,r+1}(\ell)$ and computed its stable rational cohomology.  Define the map 
$$\begin{aligned} \pi: \mathcal{C}_{g,r+1}(\ell) &\to \mathcal{M}_{g,1}(\ell) &\\
(C,x_1,y_2,\cdots,y_{r+1},h) &\mapsto (C,x_1,h),\end{aligned}$$
which is a holomorphic map of quasi-projective varieties when $\ell\ge 3$ and a map of orbifolds when $\ell=2$.

The fiber of $\pi$ consists of $r$ ordered (not necessarily distinct) points in the regular $\mathcal{D}$-cover of the closed genus-$g$ Riemann surface $C$, so the fiber is homeomorphic to $(\widetilde{\Sigma_g}_{(\mathcal{D})})^{\times r}$. Since the fiber is projective, we can apply Deligne's degeneration theorem \ref{ever} to the map $f$, implying that the associated Leray spectral sequence degenerates at the second page: 
$$E_2^{p,q}=H^p( \mathcal{M}_{g,1}(\ell);H^q((\widetilde{\Sigma_g}_{(\mathcal{D})})^{\times r};\Q))\Rightarrow H^{p+q}(\mathcal{C}_{g,r+1}(\ell);\Q).$$
Thus we have:
$$H^{k}(\mathcal{C}_{g,r+1}(\ell);\Q) \cong \bigoplus_{p+q=k}H^p( \mathcal{M}_{g,1}(\ell);H^q(( \widetilde{\Sigma_g}_{(\mathcal{D})})^{\times r};\Q)).$$
Since the Leray filtration respects the mixed Hodge structure of $H^*(\mathcal{C}_{g,r+1}(\ell);\Q)$, terms on the second page inherit mixed Hodge structures.

On the one hand, by Theorem \ref{coh2}, the rational cohomology of $\mathcal{C}_{g,r+1}(\ell)$ is given by
$$\begin{aligned} H^\bullet(\mathcal{C}_{g,r+1}(\ell);\Q) \cong \bigoplus_{\widetilde{P}\in \mathcal{P}_{r+1}^{\mathcal{D}}} H^\bullet (\M_{g}(\ell);\Q)\otimes \Q[v_{\widetilde{I}}:\widetilde{I}\in \widetilde{P}]a_{\widetilde{P}}
\end{aligned}$$
in degrees $k$ such that $g\ge 2k^2+7k+2$. This is also an isomorphism of mixed Hodge structures, as discussed in Remark \ref{Cmix}.

On the other hand, we can expand the terms on the second page of the Leray spectral sequence using the K\"unneth formula:
$$\begin{aligned} &H^p (\M_{g,1}(\ell);H^q((\widetilde{\Sigma_g}_{(\mathcal{D})})^{\times r};\Q)) \\
\cong & \bigoplus_{i_1+i_2+\cdots+i_r=q} H^p(\M_{g,1}(\ell);H^{i_1}(\widetilde{\Sigma_g}_{(\mathcal{D})};\Q)\otimes H^{i_2}(\widetilde{\Sigma_g}_{(\mathcal{D})};\Q) \otimes \cdots \otimes H^{i_r}(\widetilde{\Sigma_g}_{(\mathcal{D})};\Q)).\end{aligned}$$
Our target $H^{k-r}(\M_{g,1}(\ell);H^1( \widetilde{\Sigma_g}_{(\mathcal{D})};\Q)^{\otimes r})$ is the component where $i_1=\cdots=i_r=1$. Other components can be determined as follows:
\begin{enumerate}
\item If some $i_j=0$, the component $$H^p(\M_{g,1}(\ell);H^{i_1}(\widetilde{\Sigma_g}_{(\mathcal{D})};\Q)\otimes \cdots \otimes H^{i_j}(\widetilde{\Sigma_g}_{(\mathcal{D})};\Q) \otimes \cdots \otimes H^{i_r}(\widetilde{\Sigma_g}_{(\mathcal{D})};\Q))$$ appears in $H^p (\M_{g,1};H^q((\widetilde{\Sigma_g}_{(\mathcal{D})})^{\times r};\Q))$ where $q\le2r-2$.
For $2\le i \le r+1$, let the map $\pi^{(i)}: \mathcal{C}_{g,r+1}(\ell) \to \mathcal{C}_{g,r}(\ell)$ be defined by
$$(C,x_1,y_2,\cdots,y_{r+1},h)\mapsto (C,x_1,y_2,\cdots\widehat{y_i},\cdots,y_{r+1},h).$$
By definition, the map $\pi :\mathcal{C}_{g,r+1}(\ell) \to \mathcal{M}_{g,1}(\ell)$ factors through $\pi^{(i)}$. We then have the following commuting diagram:
\begin{equation*}\begin{aligned}\xymatrix{(\widetilde{\Sigma_g}_{(\mathcal{D})})^{\times r} \ar[d] \ar[r] & \mathcal{C}_{g,r+1}(\ell) \ar[r] \ar[d]^{\pi^{(i)}}  & \mathcal{M}_{g,1}(\ell) \ar[d]^{id} \\ (\widetilde{\Sigma_g}_{(\mathcal{D})})^{\times (r-1)}\ar[r]& \mathcal{C}_{g,r}(\ell) \ar[r]& \mathcal{M}_{g,1}(\ell) }\end{aligned},\end{equation*}
which induces maps between the $E_2$ terms of the two Leray spectral sequences:
$$H^{p}(\mathcal{M}_{g,1}(\ell);H^q((\widetilde{\Sigma_g}_{(\mathcal{D})})^{\times (r-1)};\Q)) \to H^{p}(\mathcal{M}_{g,1}(\ell);H^q((\widetilde{\Sigma_g}_{(\mathcal{D})})^{\times r};\Q)), q\le 2r-2.$$
After we expand the left-hand side by the K\"unneth formula, these terms can be identified by induction.
\item If some  $i_j=2$, such a component appears as the image of the cup product
\begin{equation*}\begin{aligned}
H^{p}(\M_{g,1}(\ell);H^{i_1}(\widetilde{\Sigma_g}_{(\mathcal{D})};\Q)\otimes \cdots& \otimes H^{\widehat{i_{j}}}(\widetilde{\Sigma_g}_{(\mathcal{D})};\Q) \otimes \cdots \otimes H^{i_r}(\widetilde{\Sigma_g}_{(\mathcal{D})};\Q)) \\
\otimes H^0(\M_{g,1}(\ell)&;H^{2}(\widetilde{\Sigma_g}_{(\mathcal{D})};\Q))\\ &\Big\downarrow \\ H^p(\M_{g,1}(\ell);H^{i_1}(\widetilde{\Sigma_g}_{(\mathcal{D})};\Q)\otimes  \cdots & \otimes H^{i_j}(\widetilde{\Sigma_g}_{(\mathcal{D})};\Q) \otimes \cdots \otimes H^{i_r}(\widetilde{\Sigma_g}_{(\mathcal{D})};\Q)).
\end{aligned}
\end{equation*}
Here the term 
$$H^{p}(\M_{g,1}(\ell);H^{i_1}(\widetilde{\Sigma_g}_{(\mathcal{D})};\Q)\otimes \cdots \otimes H^{\widehat{i_{j}}}(\widetilde{\Sigma_g}_{(\mathcal{D})};\Q) \otimes \cdots \otimes H^{i_r}(\widetilde{\Sigma_g}_{(\mathcal{D})};\Q)) $$
 is known by induction. Additionally, by following essentially the same argument as in Lemma \ref{lem3.5}, but applied to the Leray spectral sequence associated with the map $\mathcal{C}_{g,2}(\ell)\to \mathcal{M}_{g,1}(\ell)$, we deduce that $H^0(\M_{g,1}(\ell);H^{2}(\widetilde{\Sigma_g}_{(\mathcal{D})};\Q))$ is isomorphic to $\Q$ generated by $a_{(\{1<j+1\},1\in \mathcal{D})}$. Using this, we can compute the image and simplify the result by employing the relations (\ref{va1}) (\ref{va2}) (\ref{va3}) satisfied by $v_i$ and $a_{\widetilde{I}}$.
\end{enumerate}
We carefully list components of these two types in Table 2 below, where polynomials in the table refer to the degree $k$ terms and $K$ denotes $H^\bullet (\M_{g}(\ell);\Q)$.  
\begin{table}
\footnotesize
\caption{Rational cohomology of $\mathcal{C}_{g,r+1}(\ell)$ written in two Ways}
\centering
\begin{tabular}{|c|c|c|}
\hline
 & & \\
$\widetilde{P}\in \mathcal{P}^{\mathcal{D}}_{r+1}$ & $H^k (\mathcal{C}_{g,r+1}(\ell);\Q)$ &  $ \bigoplus\limits_{p+q=k} H^p (\mathcal{M}_{g,1}(\ell);H^q(\widetilde{\Sigma_g}_{(\mathcal{D})}\times \widetilde{\Sigma_g}_{(\mathcal{D})};\Q))$ \\
\hline
$\{1\},\{2\},\cdots,\{r+1\}$ & $K\otimes \Q[v_1,v_2, \cdots, v_{r+1}]$ & $K\otimes\Q[u_1]$ \\
 & & $K\otimes v_a\Q[v_1,v_a],\forall a\ge 2$ \\
 & & $K\otimes v_b v_c\Q[v_1,v_b,v_c],2\le b<c$ \\
 & & \vdots \\
 & & $v_{a_1}v_{a_2}\cdots v_{a_{r-1}}[v_1, v_{a_1},v_{a_2},\cdots, v_{a_{r-1}}],$ \\
 & & $2\le  a_1<a_2<\cdots <a_{r-1}$ \\
 & & ? $\subset H^{k-r}(\mathcal{M}_{g,1}(\ell);H^1( \widetilde{\Sigma_g}_{(\mathcal{D})};\Q)^{\otimes r})$\\
\hline
 & & \\
$\{1\},I_2=(S_2,\vec d_2), \cdots$& $K\otimes \Q[v_1,v_{I_j}:j\ge 2]a_{\widetilde{P}}$ & ? $\subset H^{k-r}(\mathcal{M}_{g,1}(\ell);H^1( \widetilde{\Sigma_g}_{(\mathcal{D})};\Q)^{\otimes r})$\\
$\cdots,I_{\nu}=(S_\nu,\vec d_{\nu})$ & &\\
$|S_j|\ge 2,\forall j\ge 2$ & &\\
\hline
 & & \\
$\{1\},S_2=\{s_2\},\cdots$ &  $K\otimes  \Q[v_1, v_{s_2},\cdots$ & $K\otimes\Q[v_1,v_{I_j}:j\ge m]a_{\widetilde{P}}$\\
$\cdots,S_{m-1}=\{s_{m-1}\},$ & $\cdots,v_{s_m}, v_{I_j}:j\ge m]a_{\widetilde{P}} $ & $K\otimes v_a \Q[v_1,v_{s_a},v_{I_j}:j\ge m]a_{\widetilde{P}},$\\
$I_m=(S_m,\vec d_m), \cdots $& & $2\le a \le m-1,m>3$ \\
$\cdots,I_{\nu}=(S_\nu,\vec d_{\nu})$& &    \vdots \\
$m>2,\forall j\ge m,|S_j|\ge 2$& &   $v_{s_{a_1}}\cdots v_{s_{a_{m-3}}}\Q[v_1,v_{s_{a_1}},\cdots$ \\
 & & $\cdots, v_{s_{a_{m-3}}},v_{I_j}:j\ge m]a_{\widetilde{P}} $\\
 & & ($2\le a_1<\cdots <a_{m-3}\le m-1$.) \\
 & & ? $\subset H^{k-r}(\mathcal{M}_{g,1}(\ell);H^1( \widetilde{\Sigma_g}_{(\mathcal{D})};\Q)^{\otimes r})$\\
\hline
 & & \\
$(S_1,\vec d_1),\cdots,(S_\nu,\vec d_{\nu})$ & $K\otimes \Q[u_{I_j}:1\le j\le \nu]a_{\widetilde{P}}  $  & If for $\vec d_1=(d_1^{(1)},d_1^{(2)},\cdots, d_1^{(|S_1|-1)}),$\\
$1\in S_1,|S_1|\ge 2$ & & $\exists i$ such that $d_1^{(i)}=1:$ all can be realized outside\\
 & &  $H^{k-r}(\mathcal{M}_{g,1}(\ell);H^1( \widetilde{\Sigma_g}_{(\mathcal{D})};\Q)^{\otimes r})$;  \\
 & & if $\forall i$, $d_1^{(i)}\neq 1:$ \\
& & all except $ ( \prod\limits_{\{i\}\in \widetilde{P},i\neq 1} v_i )\Q[v_I:I \in \widetilde{P}] a_{\widetilde{P}}$ can be\\
& & realized outside $ H^{k-r}(\mathcal{M}_{g,1}(\ell);H^1( \widetilde{\Sigma_g}_{(\mathcal{D})};\Q)^{\otimes r})$.\\
\hline
\end{tabular}
\end{table}
Since all isomorphisms in the table are isomorphims of (polarized) mixed Hodge structures, by semi-simplicity, we have, when $g\ge 2k^2+7k+2$,
$$H^{\bullet-r}(\mathcal{M}_{g,1}(\ell);H^1( \widetilde{\Sigma_g}_{(\mathcal{D})};\Q)^{\otimes r})\cong H^\bullet (\M_{g}(\ell);\Q)\otimes \left( \bigoplus_{\widetilde{P}\in \mathcal{P}^{\mathcal{D}}_{r+1}(J)} ( \prod_{\{i\}\in \widetilde{P},i\neq 1} v_i )\Q[v_{\widetilde{I}}:\widetilde{I}\in \widetilde{P}] a_{\widetilde{P}}\right),$$
in degrees $k$ such that  $g\ge 2k^2+7k+2$. Here $J=(0,0,\cdots,0)$.
\end{proof}

\begin{proof}[\rm\bf{Proof of Theorem \ref{gcase} for general $J$}]
Let $J=(J_1,J_2,\cdots,J_r)$. We have just proved the case $J=(0,0,\cdots,0)$. We now prove the theorem for a general $J$ by induction on $r$ and on $\sum\limits_{i=1}^r J_i$.

When $r=0$, the theorem holds by the same reasoning as in the beginning of the previous proof.

For $r\ge1$, assume by induction that the theorem holds for cases $\le (r-1)$. When $\sum\limits_{i=1}^r J_i=0$, the proof has already been provided. Now, let $\sum\limits_{i=1}^r J_i=m>0$, and assume by induction that the theorem holds for $\sum\limits_{i=1}^r J_i=m-1$. 

For $J=(J_1,J_2,\cdots,J_r)$, since $\sum\limits_{i=1}^r J_i>0$,  there exists at least one $J_t$ that is $1$. Replacing the $t$-th entry in $J$ by $0$, we obtain a new array $\widetilde{J}$ with $\sum\limits_{i=1}^r \widetilde{J}_i=m-1$. We can then write:
$$\fH^r(J)=f(J_1)\otimes \cdots \otimes f(J_t=1)\otimes\cdots \otimes f(J_r),$$ $$\fH^r(\widetilde{J})=f(J_1)\otimes \cdots \otimes f(\widetilde{J_t}=0)\otimes\cdots \otimes f(J_r).$$
Here $f(0)=H^1(\widetilde{\Sigma_g}_{(\mathcal{D})};\Q)$ and $f(1)=\fH_{g,1}(\ell;\Q)$. Recall that the Prym representation $\fH_{g,1}(\ell;\Q)$ is $H^1(\widetilde{\Sigma_{g,1}}_{(\mathcal{D})};\Q)$, where $\widetilde{\Sigma_{g,1}}_{(\mathcal{D})}$ is the regular $\mathcal{D}$-cover of $\Sigma_{g,1}$. By filling in all punctures of $\widetilde{\Sigma_{g,1}}_{(\mathcal{D})}$, we obtain the following short exact sequence of $\M_{g,1}(\ell)$-modules:
$$0 \to H^1(\widetilde{\Sigma_g}_{(\mathcal{D})};\Q)\to \fH_{g,1}(\ell;\Q) \to \Q^{|\mathcal{D}|-1} \to 0.$$
Tensoring this short exact sequence with $f(J_1)\otimes \cdots \otimes f(J_{t-1})$ on the left and with $f(J_{t+1})\otimes\cdots \otimes f(J_r)$ on the right, we obtain:
$$0\to \fH^r(\widetilde{J})\to \fH^r(J) \to f(J_1)\otimes \cdots \otimes f(J_{t-1})\otimes \Q^{|\mathcal{D}|-1}  \otimes f(J_{t+1})\otimes\cdots \otimes f(J_r) \to 0.$$
This short exact sequence of $\M_{g,1}(\ell)$-modules induces the following long exact sequence:
$$ \begin{aligned}\cdots & \to H^{\bullet-r-1}(\M_{g,1}(\ell);f(J_1)\otimes \cdots \otimes f(J_{t-1})\otimes \Q^{|\mathcal{D}|-1}  \otimes f(J_{t+1})\otimes\cdots \otimes f(J_r))\to\\  & \to H^{\bullet-r} (\M_{g,1}(\ell);\fH^r(\widetilde{J})) \to 
 H^{\bullet-r}(\M_{g,1}(\ell); \fH^r(J))\to \\ & \to H^{\bullet-r}(\M_{g,1}(\ell);f(J_1)\otimes \cdots \otimes f(J_{t-1})\otimes \Q^{|\mathcal{D}|-1}  \otimes f(J_{t+1})\otimes\cdots \otimes f(J_r))\to\cdots.\end{aligned}$$
Our goal is to compute $ H^{\bullet-r}(\M_{g,1}(\ell); \fH^r(J))$. Let $\phi_\bullet$ denote the map \begin{equation}\label{map} \begin{aligned} &H^{\bullet-r-1}(\M_{g,1}(\ell);f(J_1)\otimes \cdots \otimes f(J_{t-1})\otimes \Q^{|\mathcal{D}|-1}  \otimes f(J_{t+1})\otimes\cdots \otimes f(J_r)) \\ \to & H^{\bullet-r} (\M_{g,1}(\ell);\fH^r(\widetilde{J})).\end{aligned}\end{equation}
Then we have
\begin{equation}\label{kercoker} 0 \to \textup{Coker}(\phi_{\bullet})\to H^{\bullet-r}(\M_{g,1}(\ell); \fH^r(J)) \to \textup{Ker}(\phi_{\bullet+1}).\end{equation}
On the one hand, the source term of the map (\ref{map}) is
$$\begin{aligned} & H^{\bullet-r-1}(\M_{g,1}(\ell);f(J_1)\otimes \cdots \otimes f(J_{t-1})\otimes \Q^{|\mathcal{D}|-1}  \otimes f(J_{t+1})\otimes\cdots \otimes f(J_r)) \\ \cong  &\bigoplus_{|\mathcal{D}|-1} H^{\bullet-r-1}(\M_{g,1}(\ell);f(J_1)\otimes \cdots \otimes f(J_{t-1})  \otimes f(J_{t+1})\otimes\cdots \otimes f(J_r)).\end{aligned}$$
Since the coefficients are $(r-1)$-tensor powers, this cohomology is known by the induction hypothesis on $r$. For each $1\neq d \in\mathcal{D}$, the component 
\begin{equation}\label{component} H^{\bullet-r-1}(\M_{g,1}(\ell);f(J_1)\otimes \cdots \otimes f(J_{t-1})  \otimes f(J_{t+1})\otimes\cdots \otimes f(J_r))\end{equation}
is isomorphic to the direct sum of 
$$\left( H^\bullet (\M_{g}(\ell);\Q)\otimes ( \prod_{\{i\}\in \widetilde{P},i\neq 1} v_i )\Q[v_{\widetilde{I}}:\widetilde{I}\in \widetilde{P}] a_{\widetilde{P}}\right)^{\bullet-2},$$
where $\widetilde{P}$ ranges over all $\mathcal{D}$-weighted partitions of the index set $\{1,\cdots,t,\widehat{t+1},t+2,\cdots,r+1\}$ that are compatible with $(J_1,\cdots,J_{t-1},J_{t+1},\cdots,J_r)$. Here $[2]$ indicates a degree shift by 2.
Inserting $(t+1)$ into the index set, we can rewrite this as a direct sum of
\begin{equation}\label{polycom} \left( H^\bullet (\M_{g}(\ell);\Q)\otimes ( \prod_{\{i\}\in \widetilde{P},i\neq 1} v_i )\Q[v_{\widetilde{I}}:\widetilde{I}\in \widetilde{P}]a_{I_1\setminus \{t+1\}} \prod\limits_{m\ge 2} a_{I_m}\right)^{\bullet-2},\end{equation}
where $\widetilde{P}=\{I_1=(S_1,\vec d_1),\cdots,I_{\nu}=(S_\nu,\vec d_{\nu})\}$ ranges over all $\mathcal{D}$-weighted partitions of the index set $[r+1]$ that satisfy
\begin{itemize}
\item $1\in S_1$ and $t+1\in S_1$. 
\item Each entry in $\vec d_1$ is not $1\in\mathcal{D}$. In particular, the entry corresponding to $t+1$ is $d$.
\item $S_1$ does not contain any $2\le a \le r+1$ such that $J_{a-1}=1$.
\end{itemize}

On the other hand, since $\sum_{i=1}^{r+1}\widetilde{J}_i=m-1<m$, the target term $H^{\bullet-r} (\M_{g,1}(\ell);\fH^r(\widetilde{J}))$ of the map (\ref{map}) is known by the induction hypothesis on $\sum_{i=1}^{r+1}\widetilde{J}_i$. We have
$$H^{\bullet-r}(\M_{g,1}(\ell);\fH^r(\widetilde{J}))\cong H^\bullet (\M_{g}(\ell);\Q)\otimes \left( \bigoplus_{\widetilde{P}\in \mathcal{P}^{\mathcal{D}}_{r+1}(\widetilde{J})} ( \prod_{\{i\}\in \widetilde{P},i\neq 1} v_i )\Q[v_{\widetilde{I}}:\widetilde{I}\in \widetilde{P}] a_{\widetilde{P}}\right),$$ 
in degrees $k$ such that $g\ge 2k^2+7k+2$.

The map (\ref{map}) restricted to the component (\ref{component}) indexed by $1\neq d\in \mathcal{D}$ is multiplication by the Poincar\'e dual $a_{(\{1<t+1\},d)}$. Hence, the image of the summand (\ref{polycom}) is 
\begin{equation}\label{product} H^\bullet (\M_{g}(\ell);\Q)\otimes ( \prod_{\{i\}\in \widetilde{P},i\neq 1} v_i )\Q[v_{\widetilde{I}}:\widetilde{I}\in \widetilde{P}]a_{I_1\setminus \{t+1\}} \prod\limits_{m\ge 2} a_{I_m}\cdot a_{(\{1<t+1\},d)}.\end{equation}
The relation (\ref{va2}) for $I_1=(S_1,\vec d_1)=(\{1<t+1\},d)\cup (I_1\setminus \{t+1\})$ tells us
$$ a_{(\{1<t+1\},d)}\cdot a_{I_1\setminus \{t+1\}}=a_{I_1}.$$
Thus (\ref{product}) can be rewritten as 
$$ \begin{aligned}& H^\bullet (\M_{g}(\ell);\Q)\otimes ( \prod_{\{i\}\in \widetilde{P},i\neq 1} v_i )\Q[v_{\widetilde{I}}:\widetilde{I}\in \widetilde{P}]a_{I_1\setminus \{t+1\}} \prod\limits_{m\ge 2} a_{I_m}\cdot a_{(\{1<t+1\},d)}\\
=&H^\bullet (\M_{g}(\ell);\Q)\otimes ( \prod_{\{i\}\in \widetilde{P},i\neq 1} v_i )\Q[v_{\widetilde{I}}:\widetilde{I}\in \widetilde{P}]a_{I_1} \prod\limits_{m\ge 2} a_{I_m}\\
=&H^\bullet (\M_{g}(\ell);\Q)\otimes ( \prod_{\{i\}\in \widetilde{P},i\neq 1} v_i )\Q[v_{\widetilde{I}}:\widetilde{I}\in \widetilde{P}]  a_{\widetilde{P}}.\end{aligned}$$
Thus the image of the map (\ref{map}) is the direct sum of (\ref{product}) as $\widetilde{P}$ ranges over $\mathcal{D}$-weighted partitions of the index set $[r+1]$ that satisfy the conditions specified above. By excluding these particular $\mathcal{D}$-weighted partitions from $ \mathcal{P}^{\mathcal{D}}_{r+1}(\widetilde{J})$, the cokernel of the map (\ref{map}) is the direct sum of 
$$H^\bullet (\M_{g}(\ell);\Q)\otimes ( \prod_{\{i\}\in \widetilde{P},i\neq 1} v_i )\Q[v_{\widetilde{I}}:\widetilde{I}\in \widetilde{P}]  a_{\widetilde{P}},$$
as $\widetilde{P}$ ranges over $\mathcal{D}$-weighted partitions of the index set $[r+1]$ that are compatible with $\widetilde{J}$ and $S_1$ does not contain $t+1$. According to the definition of compatibility (Definition \ref{compa}), these are precisely the $\mathcal{D}$-weighted partitions of the index set $[r+1]$ that are compatible with $J$, since $J_t=1$.

Since the map \eqref{map} is injective, the kernel is zero. Thus by (\ref{kercoker}) we have 
$$H^{\bullet-r}(\M_{g,1}(\ell); \fH^r(J))\cong H^\bullet (\M_{g}(\ell);\Q)\otimes \left( \bigoplus_{\widetilde{P}\in \mathcal{P}^{\mathcal{D}}_{r+1}(J)} ( \prod_{\{i\}\in \widetilde{P},i\neq 1} v_i )\Q[v_{\widetilde{I}}:\widetilde{I}\in \widetilde{P}] a_{\widetilde{P}}\right).$$
This completes the induction.
\end{proof}

Our next goal is to generalize the result from $\Sigma_{g,1}$ to any non-closed surface $\Sigma_{g,p}^b$. In order to do that, we first need the following result for $\Sigma_g^1$:

\begin{cor}\label{boundary1}
There is a graded map of $H^{\bullet}(\M_{g}^1(\ell);\mathbb{Q})$-modules:
$$H^\bullet (\M_{g}^1(\ell);\Q)\otimes \left(\bigoplus_{\widetilde{P}\in\mathcal{P}_r^{\mathcal{D}}}(\prod_{\{i\}\in \widetilde{P}} v_i) \Q[v_{\widetilde{I}}:\widetilde{I}\in \widetilde{P}]a_{\widetilde{P}}\right)\to H^{\bullet-r}(\M_{g}^1(\ell);\fH_{g}^1(\ell;\Q)^{\otimes r}),$$
which is an isomorphism in degrees $k$ such that $g\ge 2k^2+7k+2$. 
\end{cor}

\begin{proof}
By Proposition \ref{lBirman}, We have the following short exact sequence obtained by gluing a punctured disk to the boundary of $\Sigma_{g}^1$:
$$1\to \mathbb{Z}\to\M_g^1(\ell) \to\M_{g,1}(\ell)\to 1.$$
This sequence induces a Gysin sequence (Proposition \ref{gy}) for cohomology with coefficients in $\fH_{g,1}(\ell;\Q)^{\otimes r}\cong \fH_g^1(\ell;\Q)^{\otimes r}$:
$$\begin{aligned}\cdots \to &H^{\bullet-r-2}(\M_{g,1}(\ell);\fH_{g,1}(\ell;\Q)^{\otimes r}) \to H^{\bullet-r}(\M_{g,1}(\ell);\fH_{g,1}(\ell;\Q)^{\otimes r})\to \\
\to &H^{\bullet-r}(\M_g^1(\ell);\fH_g^1(\ell;\Q)^{\otimes r}) \to \\
\to & H^{\bullet-r-1}(\M_{g,1}(\ell);\fH_{g,1}(\ell;\Q)^{\otimes r}) \to H^{\bullet-r+1}(\M_{g,1}(\ell);\fH_{g,1}(\ell;\Q)^{\otimes r}) \to \cdots.\end{aligned}$$
Let $\phi_{\bullet}$ denote the boundary map: $$\phi_{\bullet}:H^{\bullet-r-2}(\M_{g,1}(\ell);\fH_{g,1}(\ell;\Q)^{\otimes r}) \to H^{\bullet-r}(\M_{g,1}(\ell);\fH_{g,1}(\ell;\Q)^{\otimes r}).$$ We then have the following short exact sequence
$$1\to \textup{Coker}(\phi_{\bullet})\to H^{\bullet-r}(\M_g^1(\ell);\fH_g^1(\ell;\Q)^{\otimes r}) \to \textup{Ker}(\phi_{\bullet+1}) \to 1.$$
Since $\phi_{\bullet}$ is multiplication by the Euler class $e_1$, it is injective. The target term of $\phi_{\bullet}$ is known from Theorem \ref{case1}, so we can determine the cokernel of $\phi_{\bullet}$:
\[\begin{gathered} H^\bullet (\M_{g}(\ell);\Q)\otimes \left(\bigoplus_{\widetilde{P}\in\mathcal{P}_r^{\mathcal{D}}}(\prod_{\{i\}\in \widetilde{P}} v_i) \Q[v_{\widetilde{I}}:\widetilde{I}\in \widetilde{P}]a_{\widetilde{P}}\right),\end{gathered}\]
in degrees $k$ such that $g \ge 2k^2+7k+2$. This is isomorphic to $H^{\bullet-r}(\M_{g}^1(\ell);\fH_{g}^1(\ell;\Q)^{\otimes r})$.
\end{proof}

To generalize our results to any con-closed surface $\Sigma_{g,p}^b$, we introduce Putman's theory on partial level-$\ell$ representations (\cite{AndyStable}) as follows.
\subsection{Putman's theory on partial level-$\ell$ representations}
Let $$\fH_{g,p}^b(\ell;\mathbb{C})=\fH_{g,p}^b(\ell;\Q)\otimes_{\Q} \mathbb{C}.$$ The finite group $\mathcal{D}\cong(\mathbb{Z}/\ell)^{2g}$ acts on $\fH_{g,p}^b(\ell;\mathbb{C})$ via deck transformations. Irreducible representations of $\mathcal{D}$ are characterized by characters. A character $\chi:\mathcal{D}\to \mathbb{C}\setminus\{0\}$ gives an irreducible representation $\mathbb{C}_{\chi}$ where $d\cdot \vec v=\chi(d)\vec v$ for $d\in\mathcal{D}$ and $\vec v\in \mathbb{C}_{\chi}$.

We denote by $\widehat{\mathcal{D}}$ the group of all characters of $\mathcal{D}$, and let $\fH_{g,p}^b(\chi)$ present the $\mathbb{C}_{\chi}$-isotypic component of $\fH_{g,p}^b(\ell;\mathbb{C})$. Thus we can decompose $\fH_{g,p}^b(\ell;\mathbb{C})$ as:
$$\fH_{g,p}^b(\ell;\mathbb{C})=\bigoplus_{\chi\in\widehat{\mathcal{D}}} \fH_{g,p}^b(\chi).$$
Since the action of $\M_{g,p}^b(\ell)$ on $\fH_{g,p}^b(\ell;\mathbb{C})$ commutes with the action of $\mathcal{D}$, each isotypic component is preserved, giving a decomposition into $\M_{g,p}^b(\ell)$-modules. Taking the $r$-tensor power of $\fH_{g,p}^b(\ell;\mathbb{C})$, we get:
\begin{equation}\label{cha1} \fH_{g,p}^b(\ell;\mathbb{C})^{\otimes r}=\bigoplus_{\chi_1,\cdots,\chi_r\in\widehat{\mathcal{D}}} \fH_{g,p}^b(\chi_1)\otimes\cdots\otimes \fH_{g,p}^b(\chi_r).\end{equation}
A subgroup $H<H_1(\Sigma_{g,p}^b;\mathbb{Z}/\ell)$ is called \textbf{symplectic} if the algebraic intersection pairing on $H_1(\Sigma_{g,p}^b;\mathbb{Z}/\ell)$ restricts to a non-degenerate pairing on $H$. Here ``non-degenerate'' means that it identifies $H$ with its dual $\textup{Hom}(H,\mathbb{Z}/\ell)$. We can write
 $H\cong (\mathbb{Z}/\ell)^{2h}$, where $h$ is called the genus of $H$. 
 
For a symplectic subgroup $H$, there is a surjective homomorphism:
\begin{equation}\label{covermap} \pi_1(\Sigma_{g,p}^b)\to H_1(\Sigma_{g,p}^b;\mathbb{Z}/\ell)= H\oplus H^{\perp}\xrightarrow{proj} H.\end{equation}
This map factors through $\mathcal{D}=H_1(\Sigma_g;\mathbb{Z}/\ell)$ since the homology classes of loops around boundary components and punctures lie in $H^{\perp}$. Thus we have a surjective homomorphism $\mathcal{D}\to H$. A character $\chi\in \widehat{\mathcal{D}}$ is said to be \textbf{compatible} with $H$ if it factors through the map $\mathcal{D}\to H$. The following lemma is a special case of Putman's Lemma 6.9 in \cite{AndyStable}:

\begin{lem}\label{compatible}
For $g > r$, given $r$ characters $\chi_1,\cdots,\chi_r\in\widehat{\mathcal{D}}$, there exists a symplectic subgroup $H$ of genus $r$ such that $\chi_1,\cdots,\chi_r$ are all compatible with $H$.
\end{lem}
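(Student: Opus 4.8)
The plan is to construct the symplectic subgroup $H$ by hand, one hyperbolic pair at a time, so that each character $\chi_i$ becomes "visible" on it. I would set up the standard symplectic basis $\{a_1,b_1,\dots,a_g,b_g\}$ of $H^1(\Sigma_{g,p}^b;\mathbb{Z}/\ell)$ (pushed forward from the closed surface, so that the intersection pairing is the standard one $\langle a_j,b_j\rangle = 1$, all other pairings zero). Dualizing, identify $\mathcal{D} = H_1(\Sigma_g;\mathbb{Z}/\ell)$ with its span, so a character $\chi_i \in \widehat{\mathcal{D}}$ is the same data as an element of $H^1(\Sigma_g;\mathbb{Z}/\ell) \cong (\mathbb{Z}/\ell)^{2g}$ via the pairing; write $\chi_i \leftrightarrow w_i = \sum_j (\alpha_{ij} a_j + \beta_{ij} b_j)$.

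The key observation is that the $r$ vectors $w_1,\dots,w_r$ span a subspace of $(\mathbb{Z}/\ell)^{2g}$ of dimension at most $r$, and since $g > r$ we have enough room. First I would pick $r$ of the hyperbolic pairs, say the first $r$ pairs $\{a_1,b_1\},\dots,\{a_r,b_r\}$, and let $H_0 = \mathrm{span}(a_1,b_1,\dots,a_r,b_r)$; this is automatically a symplectic subgroup of genus $r$ because the pairing restricted to it is a direct sum of $r$ hyperbolic planes, hence non-degenerate. However $H_0$ need not be compatible with the $\chi_i$. To fix this I would apply a symplectic automorphism $\varphi \in Sp(2g;\mathbb{Z}/\ell)$ of $H^1(\Sigma_{g,p}^b;\mathbb{Z}/\ell)$ that moves the span of $w_1,\dots,w_r$ inside $\mathrm{span}(a_1,b_1,\dots,a_r,b_r)$; then $H := \varphi^{-1}(H_0)$ is still symplectic of genus $r$, and each $\chi_i$ factors through the projection $\mathcal{D}\to H$. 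The existence of such a $\varphi$ is a routine fact about $Sp$ over $\mathbb{Z}/\ell$: any set of $\le r \le g$ vectors can be carried into a rank-$r$ coordinate symplectic summand, using Witt's theorem / transitivity of $Sp$ on suitable configurations (here one must be slightly careful because $\mathbb{Z}/\ell$ is not a field when $\ell$ is composite, so I would instead argue directly by reduction modulo prime powers, or by an explicit inductive construction: choose a hyperbolic pair on which $w_1$ has a "unimodular" coordinate, split it off, and iterate — the hypothesis $g > r$ guarantees there are always enough free pairs left).

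Concretely, the cleaner inductive construction I would actually write down is: by induction on $k$, build a chain $H_1 \subset H_2 \subset \cdots$ of symplectic subgroups with $H_k$ of genus $k$, such that $\chi_1,\dots,\chi_k$ all factor through $\mathcal{D}\to H_k$. Given $H_{k-1}$ and a new character $\chi_k$, decompose $\mathcal{D} \cong H_{k-1} \oplus H_{k-1}^\perp$; write $\chi_k = \chi_k' + \chi_k''$ accordingly. If $\chi_k'' = 0$ we are done with $H_k = H_{k-1}$ (or enlarge trivially). Otherwise $\chi_k''$ corresponds to a nonzero vector in the symplectic module $H_{k-1}^\perp$ of genus $g - (k-1) \ge g - r + 1 \ge 2$; inside $H_{k-1}^\perp$ one can find a genus-$1$ symplectic subgroup $V$ through which $\chi_k''$ factors (this is the crux — see below), and set $H_k = H_{k-1} \oplus V$, which is symplectic of genus $k$ since it is an orthogonal direct sum of symplectic pieces. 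After $r$ steps, $H := H_r$ has genus $r$ (shrinking if some steps were trivial — but note $g>r$ ensures we never run out of room, and we may pad $H$ up to genus exactly $r$ by adding disjoint hyperbolic pairs from the orthogonal complement) and is compatible with all $\chi_i$.

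The main obstacle I anticipate is precisely the atomic step: given a single nonzero character $\chi$ on a symplectic $\mathbb{Z}/\ell$-module, find a genus-$1$ symplectic submodule through which $\chi$ factors, i.e.\ a hyperbolic pair $\{x,y\}$ with $\langle x,y\rangle$ a unit and $\chi$ supported on $\mathrm{span}(x,y)^*$. When $\ell$ is prime this is immediate from non-degeneracy (pick $x$ with $\chi(x)\neq 0$, then $y$ pairing unimodularly with $x$, using that the pairing is non-degenerate and hence the pairing-with-$x$ functional is surjective onto $\mathbb{Z}/\ell$). When $\ell$ is composite the dual vector of $\chi$ may be divisible, but since $\langle-,-\rangle$ is a \emph{perfect} pairing on the free module $(\mathbb{Z}/\ell)^{2g}$, one can still choose $x$ realizing $\chi$ and then a unimodular complement $y$; this needs a short lemma on perfect alternating pairings over $\mathbb{Z}/\ell$, which I expect to handle via the CRT decomposition $\mathbb{Z}/\ell \cong \prod \mathbb{Z}/\ell_i^{e_i}$ and the standard structure theory of alternating forms over $\mathbb{Z}/\ell_i^{e_i}$. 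This is the only place real work happens; everything else is bookkeeping with orthogonal direct sums, and the hypothesis $g > r$ is used exactly to guarantee the orthogonal complements stay large enough at every stage.
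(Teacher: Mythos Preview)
Your inductive construction is sound in outline and would work, but it takes a different route from the paper, and there is one genuine slip in your write-up of the atomic step.

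The slip: over a prime field you write ``pick $x$ with $\chi(x)\neq 0$, then $y$ pairing unimodularly with $x$''. Taken literally this is wrong. In $(\mathbb{Z}/p)^4$ with the standard form, let $\chi = \langle a_1 + a_2,\,-\,\rangle$; then $\chi(b_1)=1\neq 0$, so your recipe allows $x=b_1$, $y=a_1$, but $\chi$ does \emph{not} vanish on $\mathrm{span}(x,y)^\perp = \mathrm{span}(a_2,b_2)$, so it does not factor through $\mathrm{span}(x,y)$. The correct move is the one you already gesture at in the composite case (``choose $x$ realizing $\chi$''): take $x$ to be the symplectic dual of $\chi$, i.e.\ the vector with $\chi=\langle x,-\rangle$, and then complete $x$ (or, over $\mathbb{Z}/p^e$, a primitive $x'$ with $x\in (\mathbb{Z}/p^e)\cdot x'$) to a hyperbolic pair. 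With that correction your CRT-plus-prime-power argument goes through.

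As for the comparison: the paper does not build $H$ directly. It lifts everything to $H^1(\Sigma_g;\mathbb{Z})$, packages the $r$ characters into a single homomorphism $\mu_r\colon H^1(\Sigma_g;\mathbb{Z})\to (\mathbb{Z}/\ell)^r$, and then cites a lemma of Putman (Lemma~3.5 of \cite{partial}) to produce a symplectic subspace $V\subset H^1(\Sigma_g;\mathbb{Z})$ of genus $g-r$ on which $\mu_r$ vanishes; $H$ is then taken to be the orthogonal complement in $\mathcal{D}$ of the image of $V$. So the paper works on the \emph{complementary} side (finding where the characters vanish rather than where they are supported) and over $\mathbb{Z}$ rather than $\mathbb{Z}/\ell$. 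The payoff is that working integrally sidesteps all the composite-$\ell$ headaches you anticipated---no CRT, no primitivity arguments over $\mathbb{Z}/p^e$---at the price of invoking an external lemma. Your approach is more self-contained and arguably more direct, but does require the extra ring-theoretic care you outlined.
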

\begin{proof}
We define the following group homomorphism:
$$\mu_r: H_1(\Sigma_g;\mathbb{Z})\to H_1(\Sigma_g;\mathbb{Z}/\ell) \to (\mathbb{C}\setminus\{0\})^r, x\mapsto \bar{x}\mapsto (\chi_1(\bar{x}),\cdots, \chi_r(\bar{x})).$$
Since elements in $H_1(\Sigma_g;\mathbb{Z}/\ell)\cong (\mathbb{Z}/\ell)^{2g}$ have order dividing $l$, the image of each $\chi_i$ lies in the cyclic group of $l$-th roots of unity. Thus we can rewrite $\mu_r$ as
$$\mu_r: H_1(\Sigma_g;\mathbb{Z}) \to (\mathbb{Z}/\ell)^r.$$
By Putman's Lemma 3.5 in \cite{partial}, there exists a symplectic subspace $V$ of $H_1(\Sigma_g;\mathbb{Z})$ of genus $(g-r)$ such that $\mu_r |_{V}=0$. Letting $H$ be the orthogonal complement of the image of $V$ under the map $H_1(\Sigma_g;\mathbb{Z})\to H_1(\Sigma_g;\mathbb{Z}/\ell)$, we have the required symplectic subgroup with genus $r$.

\end{proof}

Let $\chi_1,\cdots,\chi_r\in\widehat{\mathcal{D}}$ be $r$ characters that are compatible with $H$. Define
$$\fH_{g,p}^b(\underline{\chi})=\fH_{g,p}^b(\chi_1)\otimes\cdots\otimes \fH_{g,p}^b(\chi_r).$$
The \textbf{partial level-$\ell$ subgroup} of $\M_{g,p}^b$ is 
$$\M_{g,p}^b(H)=\{f\in \M_{g,p}^b|f_*:H_1(\Sigma_{g,p}^b;\mathbb{Z}/\ell)\to H_1(\Sigma_{g,p}^b;\mathbb{Z}/\ell) \text{ fixes } H \text{ pointwise.} \}.$$
Let $S_H$ denotes the regular $H$-cover of $\Sigma_{g,p}^b$ corresponding to the map \eqref{covermap}. By definition, the partial level-$\ell$ subgroup $\M_{g,p}^b(H)$ acts on $\fH_{g,p}^b(H;\mathbb{C})\defeq H^1(S_H;\mathbb{C})$. In particular, this induces an action of $\M_{g,p}^b(H)$ on $\fH_{g,p}^b(\underline{\chi})$ due to the following decomposition of  $\fH_{g,p}^b(H;\mathbb{C})$ by Putman (\cite[Lemma 6.5]{AndyStable}):
\begin{equation}\label{Hdecom}
\fH_{g,p}^b(H;\mathbb{C})\cong \bigoplus_{\chi\in \widehat{H}} \fH_{g,p}^b(\chi),
\end{equation}
where $\widehat{H}$ denotes the set of all characters of $\mathcal{D}$ that are compatible with $H$.

By definition, $\M_{g,p}^b(\ell)<\M_{g,p}^b(H)$ since $H<H^1(\Sigma_{g,p}^b;\mathbb{Z}/\ell)$. The following theorem of Putman (\cite[Theorem D]{AndyStable}) allows us to identify the twisted cohomology of $\M_{g,p}^b(\ell)$ with the twisted cohomology of $\M_{g,p}^b(H)$:
\begin{thm}[Putman \protect{\cite[Theorem D]{AndyStable}}]\label{Hisoml}
Let $g, p, b \ge 0$ and $l\ge 2$ be such that $p + b \ge 1$. Let $H$ be a symplectic subgroup of $H^1(\Sigma_g^b;\mathbb{Z}/\ell)$ and let $\chi_1,\cdots, \chi_r\in \mathcal{D}$ be $r$ characters that are compatible with $H$. Assume that $g \ge 2(k + r)^2 + 7k + 6r + 2$. Then the map
$$H^k(\M_{g,p}^b(H);\fH_{g,p}^b(\underline{\chi}))\to H^k(\M_{g,p}^b(\ell);\fH_{g,p}^b(\underline{\chi}) )$$
induced by the inclusion $\M_{g,p}^b(\ell) \hookrightarrow \M_{g,p}^b(H) $ is an isomorphism. 

\end{thm}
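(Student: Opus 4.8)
The plan is to prove Theorem \ref{Hisoml} by first turning the comparison map into the inclusion of a space of invariants, and then showing those invariants are everything via a representation--stability argument for twisted coefficients over level subgroups. Since $\M_{g,p}^b(\ell)$ is the kernel of the action of $\M_{g,p}^b$ on $H_1(\Sigma_{g,p}^b;\mathbb{Z}/\ell)$, it is normal in $\M_{g,p}^b(H)$ with finite quotient $Q\defeq\M_{g,p}^b(H)/\M_{g,p}^b(\ell)$; concretely $Q$ is the image of $\M_{g,p}^b(H)$ in $\textup{Aut}(H^1(\Sigma_{g,p}^b;\mathbb{Z}/\ell))$, and because this image fixes $H$ pointwise it acts through the symplectic group of a rank-$(g-r)$ complement of $H$. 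As the coefficients form a $\mathbb{C}$-vector space, Proposition \ref{transfer} makes the Hochschild--Serre spectral sequence of $1\to\M_{g,p}^b(\ell)\to\M_{g,p}^b(H)\to Q\to 1$ collapse and identifies the restriction map with the inclusion
$$H^k(\M_{g,p}^b(H);\fH_{g,p}^b(\underline{\chi}))\cong H^k(\M_{g,p}^b(\ell);\fH_{g,p}^b(\underline{\chi}))^{Q}\hookrightarrow H^k(\M_{g,p}^b(\ell);\fH_{g,p}^b(\underline{\chi})).$$
Hence it suffices to show that $Q$ acts trivially on $H^k(\M_{g,p}^b(\ell);\fH_{g,p}^b(\underline{\chi}))$ when $g\ge 2(k+r)^2+7k+6r+2$. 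Note that $Q$ already acts trivially on the module $\fH_{g,p}^b(\underline{\chi})$: each $\chi_i$ factors through $H$ by the compatibility hypothesis, and $Q$ fixes $H$; so the only action left to kill is the one induced by outer conjugation of $Q$ on $\M_{g,p}^b(\ell)$.

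Next I would localize the coefficient system on a bounded subsurface. Using the construction in the proof of Lemma \ref{compatible}, choose a compact subsurface $S\subset\Sigma_{g,p}^b$ carrying all punctures and boundary components, of genus comparable to $r$, with $H$ carried by $H^1(S;\mathbb{Z}/\ell)$ and with complement a genus-$(g-r)$ subsurface glued to $S$ along a single separating curve. Then $\fH_{g,p}^b(\underline{\chi})$ is trivial as a module over the subgroup of $\M_{g,p}^b(\ell)$ generated by the mapping classes supported in the complement of $S$; in the language of polynomial coefficient systems, $\fH_{g,p}^b(\underline{\chi})$ is a summand of $H^1(\Sigma_{g,p}^b[H];\mathbb{C})^{\otimes r}$ and so is a coefficient system of degree $r$ in the symplectic variable.

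The crux, and the step I expect to be the main obstacle, is a quantitative ``bounded support'' statement: for $g\ge 2(k+r)^2+7k+6r+2$, every class in $H^k(\M_{g,p}^b(\ell);\fH_{g,p}^b(\underline{\chi}))$ lies in the image of $H^k(\M(T)(\ell);\fH_{g,p}^b(\underline{\chi}))$ for some subsurface $T\supseteq S$ whose complexity is bounded in terms of $k$ and $r$, where $\M(T)(\ell)$ denotes the level-$\ell$ subgroup of the mapping class group of $T$. I would attack this with Putman's homological-stability machinery for level subgroups: run the equivariant spectral sequence for the action of $\M_{g,p}^b(\ell)$ on a highly connected complex of disjointly embedded tethered genus-one subsurfaces, whose connectivity grows linearly in $g$, and feed in central stability for the degree-$r$ coefficient system $\fH_{g,p}^b(\underline{\chi})$ over the tower of finite symplectic groups $\mathrm{Sp}_{2g'}(\mathbb{Z}/\ell)$; the quadratic stable range for these symplectic coefficient systems is what produces the bound $2(k+r)^2+7k+6r+2$. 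Granting this, any element of $Q$ is represented by a mapping class supported in the complement of such a $T$; it commutes with $\M(T)(\ell)$ and fixes $\fH_{g,p}^b(\underline{\chi})$, so it acts trivially on the image of $H^k(\M(T)(\ell);\fH_{g,p}^b(\underline{\chi}))$ and therefore on all of $H^k(\M_{g,p}^b(\ell);\fH_{g,p}^b(\underline{\chi}))$, which completes the proof. The remaining ingredients---the transfer reduction and the localization of the module on $S$---are formal once this machinery is in place.
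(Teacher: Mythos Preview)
The paper does not prove this statement at all: Theorem~\ref{Hisoml} is quoted verbatim from Putman's paper \cite{AndyStable} (his Theorem~D) and used as a black box in the proof of Theorem~\ref{mainthm}. So there is no ``paper's own proof'' to compare your proposal against; any proof you produce is necessarily a reconstruction of Putman's argument rather than of anything in this paper.

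That said, your sketch has a genuine error and a circularity worth flagging. The claim that $Q=\M_{g,p}^b(H)/\M_{g,p}^b(\ell)$ acts trivially on the coefficient module $\fH_{g,p}^b(\underline{\chi})$ is false: the fact that each $\chi_i$ factors through $H$ only tells you that $\M_{g,p}^b(H)$ \emph{preserves} the isotypic component $\fH_{g,p}^b(\chi_i)$ (this is the content of Lemma~\ref{Hdecom}), not that it acts trivially there. The action of $\M_{g,p}^b(H)$ on $\fH_{g,p}^b(\chi_i)$ is genuinely nontrivial in general, so you cannot separate the $Q$-action on cohomology into ``action on coefficients'' plus ``outer conjugation'' and kill the first piece for free. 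You also implicitly assume $H$ has genus $r$, which the theorem does not require.

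More seriously, your ``crux'' step---the bounded-support statement proved via a highly connected simplicial complex plus central stability for $\mathrm{Sp}_{2g'}(\mathbb{Z}/\ell)$-coefficients---is exactly the substance of Putman's proof of Theorem~D, and you are invoking it by name rather than carrying it out. The overall architecture you propose (transfer reduction, localize on a subsurface carrying $H$, stabilize in the complementary direction) is correct and is indeed Putman's strategy, but as written your proposal defers all the work to ``Putman's homological-stability machinery for level subgroups,'' which is the theorem itself.
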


The following theorem of Putman (\cite[Theorem 8.1]{AndyStable}) implies a phenomenon where, when $p=0$, the twisted cohomology of $\M_{g}^b(H)$ is independent of $b$, the number of boundary components. This phenomenon is important for our later proof.

\begin{thm}[Putman \protect{\cite[Theorem 8.1]{AndyStable}}]\label{Hisom}
Let $\iota: \Sigma_g^b\to \Sigma_{g'}^{b'}$ be an orientation-preserving embedding between surfaces
with nonempty boundary. For some $l\ge 2$, let $H$ be a genus-$h$ symplectic subgroup of $H_1(\Sigma_g^b;\mathbb{Z}/\ell)$. Fix some $k, r \ge 0$, and assume that $g \ge (2h + 2)(k + r) + (4h + 2)$. Then the induced map
$$H^k(\M_{g'}^{b'}(H);\fH_{g'}^{b'}(H;\mathbb{C})^{\otimes r}) \to H^k(\M_{g}^{b}(H);\fH_{g}^{b}(H;\mathbb{C})^{\otimes r}) $$
is an isomorphism.
\end{thm}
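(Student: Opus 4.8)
The plan is to derive Theorem \ref{Hisom} from a homological stability statement for the family $\{\M_g^b(H)\}_g$ with the coefficient system $g\mapsto\fH_g^b(H;\mathbb{C})^{\otimes r}$, after first reducing a general embedding to a short list of elementary ones. \textbf{Step 1 (reduction).} Any orientation-preserving embedding $\iota:\Sigma_g^b\hookrightarrow\Sigma_{g'}^{b'}$ between surfaces with nonempty boundary is isotopic to a composition of the three elementary moves: (i) capping a boundary circle with a disk, $\Sigma_g^b\hookrightarrow\Sigma_g^{b-1}$; (ii) gluing a pair of pants along one of its boundary circles, $\Sigma_g^b\hookrightarrow\Sigma_g^{b+1}$; (iii) gluing a pair of pants along two of its boundary circles, $\Sigma_g^b\hookrightarrow\Sigma_{g+1}^{b-1}$. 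Since the map in the statement is functorial in $\iota$, it suffices by the five lemma to treat each move separately, keeping track of how the numerical hypothesis $g\ge(2h+2)(k+r)+(4h+2)$ degrades under composition (it only gets weaker, so a finite composition stays in range).

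\textbf{Step 2 (boundary moves).} Moves (i) and (ii) do not change the genus, and their effect on the Prym module is controlled: the preimage in $\Sigma_g^b[H]$ of a capping disk (resp.\ of the glued pair of pants) is a disjoint union of $|H|$ disks (resp.\ of a bounded number of pairs of pants, the count determined by the images in $H$ of the relevant boundary loops). For (i) there is, as in Proposition \ref{lBirman}, a central extension $1\to\mathbb{Z}\to\M_g^{b+1}(H)\to\M_g^b(H)\to1$ with the central $\mathbb{Z}$ generated by the Dehn twist about the capped curve, while $\fH_g^{b+1}(H;\mathbb{C})$ differs from $\fH_g^{b}(H;\mathbb{C})$ only by a trivial summand of rank $|H|-1$ coming from the extra boundary circles of the cover. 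Feeding the Gysin sequence of Proposition \ref{gy} together with the long exact sequence associated to this short exact sequence of coefficient modules into the analogous sequences downstairs, naturality of $\iota$ and a diagram chase give the isomorphism; move (ii) is handled the same way using the level-$H$ Birman exact sequence and the fact that the cohomology of the relevant surface/cover groups with these coefficients is concentrated in the expected degrees.

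\textbf{Step 3 (the genus move).} This is the substantive case and requires genuine homological stability. The plan is to run the standard machinery: build a semi-simplicial set $X_g$ on which $\M_g^b(H)$ acts, with $p$-simplices disjoint, non-separating, pairwise ``$H$-independent'' arcs in $\Sigma_g^b$ meeting the boundary in a fixed pattern, so that $X_g/\M_g^b(H)$ is a point and the stabilizer of a $p$-simplex is a copy of $\M_{g-p-1}^{b}(H)$ with a compatibly restricted symplectic subgroup. One then shows (a) $X_g$ is roughly $\tfrac{g}{2h+2}$-connected --- the $2h+2$ enters because cutting along one such arc and passing to the $H$-cover changes the genus of the cover by a bounded amount, and the cover's genus grows linearly in $h\cdot g$ --- and (b) the coefficient system $g\mapsto\fH_g^b(H;\mathbb{C})^{\otimes r}$ is polynomial of degree $\le r$: the Prym representation is itself a polynomial coefficient system of degree $\le1$ (it changes by the homology of the added handle in the cover under the stabilization $\Sigma_{g-1}^b\hookrightarrow\Sigma_g^b$), and an $r$-fold tensor power of degree-$1$ systems is polynomial of degree $r$. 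The spectral sequence of the action of $\M_g^b(H)$ on $X_g$ with coefficients in $\fH_g^b(H;\mathbb{C})^{\otimes r}$ then forces the restriction map for move (iii) to be an isomorphism once $g\ge(2h+2)(k+r)+(4h+2)$, the constant $4h+2$ absorbing the low-degree defect of the connectivity estimate.

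\textbf{Main obstacle.} The hard part is Step 3(a): proving $X_g$ is highly connected \emph{while compatible with the symplectic subgroup $H$}, i.e.\ that one can repeatedly cut along arcs without destroying $H$ and while retaining enough of the cover's topology. This is exactly the kind of statement underpinning Putman's partial-Torelli connectivity results, and extracting the sharp constant $2h+2$ in the connectivity bound (hence in the final range) is delicate. By contrast, Steps 1 and 2 are formal once the elementary exact sequences are set up, and the polynomiality check in Step 3(b) is routine.
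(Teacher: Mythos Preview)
This theorem is not proved in the paper: it is quoted verbatim from Putman's paper \cite{AndyStable} (as Theorem 8.1 there) and used as a black box in the proof of Theorem \ref{mainthm}. There is therefore no ``paper's own proof'' to compare your proposal against.

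That said, your outline is broadly in the right spirit for how Putman's argument actually goes --- reduction to elementary embeddings and a homological stability machine with polynomial coefficients --- but you should be aware of two points. First, Step 2 as you have written it is too casual: the boundary moves for the level-$H$ groups are not quite as clean as for the full mapping class group, and in Putman's actual proof these moves are absorbed into the same spectral sequence machinery rather than handled by separate Gysin/Birman arguments. Second, your identification of the ``main obstacle'' is correct: the connectivity of the relevant simplicial complex compatible with $H$ (and the extraction of the constant $2h+2$) is exactly the content of Putman's partial-Torelli connectivity results from \cite{partial} and \cite{AndyStable}, and is by far the hardest step. Your sketch does not supply any argument for this connectivity, so as a self-contained proof the proposal has a genuine gap at precisely the point you flagged.
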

\begin{remark}
If we take $H$ to be a symplectic subgroup of genus $g$, the partial level-$\ell$ subgroup $\M_{g,p}^b(H)$ is exactly the  level-$\ell$ subgroup $\M_{g,p}^b(\ell)$, since $\M_{g,p}^b$ acts trivially on the subgroup of $H_1(\Sigma_{g,p}^b;\mathbb{Z}/\ell)$ generated by loops around the boundary components and punctures of $\Sigma_{g,p}^b$. However, one cannot apply Theorem \ref{Hisom} directly to $\M_{g,p}^b(\ell)$, because if we let $h=g$, the condition $g \ge (2h + 2)(k + r) + (4h + 2)$ cannot hold.
\end{remark}

\subsection{Proof of Theorem \ref{main2}} We now prove the case of $\Sigma_{g,p}^b$ with $p+b\ge 1$ by induction on $p$. Recall that we aim to show
$$\begin{aligned}H^{\bullet-r}(\M_{g,p}^b(\ell);\fH_{g,p}^b(\ell;\Q)^{\otimes r})\cong H^{\bullet}(\M_{g,p}(\ell);\mathbb{Q})  \otimes
 \left(\bigoplus\limits_{\widetilde{P}\in\mathcal{P}_r^{\mathcal{D}}}(\prod\limits_{\{i\}\in \widetilde{P}} v_i) \Q[v_{\widetilde{I}}:\widetilde{I}\in \widetilde{P}]a_{\widetilde{P}}\right)\end{aligned}$$ in degrees $k$ such that $g\ge 2k^2+7k+2$. We will prove the statement over $\mathbb{C}$.

For $p=0$, the case of $b=1$ follows from Corollary \ref{boundary1}. It then suffices to show that this twisted cohomology is independent of $b$ for $b\ge 1$. Fix $b\ge 1$. Observe that there is an orientation-preserving embedding $\Sigma_g^1 \hookrightarrow \Sigma_g^b$ by gluing a surface homeomorphic to $\Sigma_0^{b+1}$ to the boundary of $\Sigma_g^1$. Then, by Theorem \ref{Hisom}, we have the following isomorphism for any symplectic subgroup $H$ of genus $h$, when $g\ge (2h+2)k+(4h+2)$:
\begin{equation}\label{b1} H^{k-r}(\M_g^b(H);\fH_g^b(H;\mathbb{C})^{\otimes r})\cong H^{k-r}(\M_g^1(H);\fH_g^1(H;\mathbb{C})^{\otimes r}).\end{equation}
These coefficients can be decomposed into direct sums of $H$-isotypic components by \eqref{Hdecom}:
$$\begin{aligned} \fH_g^1(H;\mathbb{C})^{\otimes r}\cong (\bigoplus_{\chi\in \widehat{H}} \fH_g^1(\chi))^{\otimes r}=\bigoplus_{\chi_1,\cdots,\chi_r\in\widehat{H}} \fH_g^1(\chi_1)\otimes\cdots\otimes\fH_g^1(\chi_r); \\
\fH_g^b(H;\mathbb{C})^{\otimes r}\cong (\bigoplus_{\chi\in \widehat{H}} \fH_g^b(\chi))^{\otimes r}=\bigoplus_{\chi_1,\cdots,\chi_r\in\widehat{H}} \fH_g^b(\chi_1)\otimes\cdots\otimes\fH_g^b(\chi_r).\end{aligned}$$
We can expand both sides of \eqref{b1} using the K\"unneth formula. Since the action of $\M_g^b$ (resp. $\M_g^1$) commutes with the action of $H$, we have an isomorphism in each direct sum component when $g\ge (2h+2)k+(4h+2)$:
\begin{equation}\label{cha3} H^{k-r}(\M_g^b(H);\fH_g^b(\chi_1)\otimes\cdots\otimes\fH_g^b(\chi_r))\cong H^{k-r}(\M_g^1(H);\fH_g^1(\chi_1)\otimes\cdots\otimes\fH_g^1(\chi_r)).\end{equation}
By Lemma \ref{compatible}, for each pair $\underline{\chi}=(\chi_1,\cdots,\chi_r)$, where $\chi_1,\cdots,\chi_r\in \widehat{\mathcal{D}}$, there exists a genus-$r$ symplectic subgroup, which we denote by $H_{\underline{\chi}}$, such that $\chi_1,\cdots,\chi_r$ are compatible with $H_{\underline{\chi}}$. By Theorem \ref{Hisoml}, we can identify the twisted cohomology of $\M_g^b(H_{\underline{\chi}})$ with the twisted cohomology of $\M_g^b(\ell)$, with coefficients in $\fH_g^b(\underline{\chi})=\fH_g^b(\chi_1)\otimes\cdots\otimes\fH_g^b(\chi_r)$:
\begin{equation}\label{cha2} H^{k-r}(\M_g^b(H_{\underline{\chi}});\fH_g^b(\underline{\chi}))\cong H^{k-r}(\M_g^b(\ell);\fH_g^b(\underline{\chi})),\end{equation}
if $g\ge 2k^2+7k-r+2$. Combing these facts, we have:
$$\begin{aligned}
H^{k-r}(\M_g^b(\ell);\fH_g^b(\ell;\mathbb{C})^{\otimes r}) & \cong H^{k-r}(\M_g^b(\ell); \bigoplus_{\chi_1,\cdots,\chi_r\in\widehat{\mathcal{D}}} \fH_g^b(\chi_1)\otimes\cdots\otimes\fH_g^b(\chi_r)) && \text{ by \eqref{cha1} } \\
&\cong \bigoplus_{\underline{\chi}\in(\widehat{\mathcal{D}})^{\times r}} H^{k-r}(\M_g^b(\ell);\fH_g^b(\underline{\chi}))&& \text{K\"unneth}\\
&\cong \bigoplus_{\underline{\chi}\in(\widehat{\mathcal{D}})^{\times r}} H^{k-r}(\M_g^b(H_{\underline{\chi}});\fH_g^b(\underline{\chi})) && \text{by \eqref{cha2}}\\
&\cong \bigoplus_{\underline{\chi}\in(\widehat{\mathcal{D}})^{\times r}} H^{k-r}(\M_g^1(H_{\underline{\chi}});\fH_g^1(\underline{\chi}))&& \text{by \eqref{cha3}} \\
&\cong \bigoplus_{\underline{\chi}\in(\widehat{\mathcal{D}})^{\times r}} H^{k-r}(\M_g^1(\ell);\fH_g^1(\underline{\chi}))&&\text{by \eqref{cha2}}\\
&\cong  H^{k-r}(\M_g^1(\ell); \bigoplus_{\chi_1,\cdots,\chi_r\in\widehat{\mathcal{D}}} \fH_g^1(\chi_1)\otimes\cdots\otimes\fH_g^1(\chi_r))&&\text{K\"unneth}\\
&\cong H^{k-r}(\M_g^1(\ell);\fH_g^1(\ell;\mathbb{C})^{\otimes r})&&\text{by \eqref{cha1}}
\end{aligned},$$
where we need to be a little bit careful with the range. In the places where we apply \eqref{cha2}, the range is $g\ge 2k^2+7k-r+2$. In the fourth isomorphism where we apply \eqref{cha3}, we let $h=r$, so the range is $g\ge (2h+2)k+(4h+2)=(2r+2)k+(4r+2)$. Since $k\ge r$, we have $$2k^2+7k-r+2\ge (2r+2)k+(4r+2).$$ Thus the above statement holds in the range $g\ge 2k^2+7k-r+2$. Since Corollary \ref{boundary1} for $H^{k-r}(\M_g^1(\ell);\fH_g^1(\ell;\mathbb{C})^{\otimes r})$ holds when $g\ge 2k^2+7k+2$, and $2k^2+7k+2\ge 2k^2+7k-r+2$, the theorem for $H^{k-r}(\M_g^b(\ell);\fH_g^b(\ell;\mathbb{C})^{\otimes r})$ holds when $g\ge 2k^2+7k+2$ as well.

Now, let $p\ge 1$. By Proposition \ref{lBirman}, we have the following short exact sequence obtained by gluing a punctured disk to $\Sigma_{g,p-1}^{b+1}$:
$$1\to \mathbb{Z}\to\M_{g,p-1}^{b+1}(\ell)\to \M_{g,p}^b(\ell)\to 1.$$
This sequence induces a Gysin sequence (Proposition \ref{gy}) with coefficients in $\fH_{g,p-1}^{b+1}(\ell;\Q)^{\otimes r}\cong \fH_{g,p}^b(\ell;\Q)^{\otimes r}$:
$$\begin{aligned}
\cdots \to& H^{\bullet-r}(\M_{g,p}^b(\ell);\fH_{g,p}^b(\ell;\Q)^{\otimes r})\to H^{\bullet-r}(\M_{g,p-1}^{b+1}(\ell);\fH_{g,p-1}^{b+1}(\ell;\Q)^{\otimes r}) \to \\ \to & H^{\bullet-r-1}(\M_{g,p}^b(\ell);\fH_{g,p}^b(\ell;\Q)^{\otimes r}) \to H^{\bullet-r+1}(\M_{g,p}^b(\ell);\fH_{g,p}^b(\ell;\Q)^{\otimes r}) \to \cdots.
\end{aligned}$$
Here the map $$\phi_{\bullet-r-1}:H^{\bullet-r-1}(\M_{g,p}^b(\ell);\fH_{g,p}^b(\ell;\Q)^{\otimes r}) \to H^{\bullet-r+1}(\M_{g,p}^b(\ell);\fH_{g,p}^b(\ell;\Q)^{\otimes r})$$ is multiplication by the Euler class $e_p\in H^2(\M_{g,p}^b(\ell);\Q)$, so it is injective. Similar to what we did in Corollary \ref{boundary1}, by observing the short exact sequence
$$1\to \textup{Coker}(\phi_{\bullet-r-2}) \to H^{\bullet-r}(\M_{g,p-1}^{b+1}(\ell);\fH_{g,p-1}^{b+1}(\ell;\Q)^{\otimes r})  \to \textup{Ker}(\phi_{\bullet-r-1})\to 1,$$
we get
$$H^{\bullet}(\M_{g,p}^b(\ell);\fH_{g,p}^b(\ell;\Q)^{\otimes r})\cong H^{\bullet}(\M_{g,p-1}^{b+1}(\ell);\fH_{g,p-1}^{b+1}(\ell;\Q)^{\otimes r})\otimes_{\Q} \Q[e_p].$$
We know $H^{\bullet}(\M_{g,p-1}^{b+1}(\ell);\fH_{g,p-1}^{b+1}(\ell;\Q)^{\otimes r})$ by the induction hypothesis. Thus we conclude 
$$\begin{aligned}& H^{\bullet-r}(\M_{g,p}^b(\ell);\fH_{g,p}^b(\ell;\Q)^{\otimes r})\\ \cong& H^{\bullet}(\M_{g,p-1}(\ell);\mathbb{Q})\otimes \Q[e_p]
\otimes \left( \bigoplus\limits_{\widetilde{P}\in\mathcal{P}_r^{\mathcal{D}}}(\prod_{\{i\}\in \widetilde{P}} v_i) \Q[v_{\widetilde{I}}:\widetilde{I}\in \widetilde{P}]a_{\widetilde{P}}\right)\\
\cong & H^{\bullet}(\M_{g,p}(\ell);\mathbb{Q})
\otimes \left( \bigoplus\limits_{\widetilde{P}\in\mathcal{P}_r^{\mathcal{D}}}(\prod_{\{i\}\in \widetilde{P}} v_i) \Q[v_{\widetilde{I}}:\widetilde{I}\in \widetilde{P}]a_{\widetilde{P}}\right)\end{aligned}$$
in degrees $k$ such that $g\ge 2k^2+7k+2$. Note that this twisted cohomology is independent of $b$ (the number of boundary components) but depends on $p$ (the number of punctures) and $g$ (the genus).\hfill $\qed$

\section{Infinitesimal Rigidity of Symplectic Prym Representations}

In this section, we aim to prove Theorem \ref{main1} about infinitesimal rigidity of symplectic Prym representations.

Recall that the symplectic Prym representation is defined in the following way. Let $\widetilde{S}\to S$ be a finite-abelian cover with deck group $A$. Let $\M(S,A)$ be the subgroup of $\M(S)$ fixing $A$ pointwise. Denote by $\widehat{S}$ the closed surface obtained by gluing disks to all boundary components and filling in all punctures of $\widetilde{S}$. Then there is a symplectic action of $\M(S,A)$ on $H^1(\widehat{S};\mathbb{R})$ which commutes with the deck group $A$:

$$\Phi: \M(S,A)\to \text{Aut}(H^1(\widehat{S};\mathbb{R}))^A.$$

Denote the target Lie group by $G_A$. Let $h$ be the genus of $\widehat{S}$, then we have

 $$G_A=Sp(2h;\mathbb{R})^{A}=\{P\in Sp(2h;\mathbb{R})| Pa=aP, \forall a\in A\}.$$

First of all, we compute the Lie algebra $\mathfrak{g}_A$ of $G_A$:
\begin{lem}\label{liealgebra}
The Lie algebra $\mathfrak{g}_A$ of $G_A=Sp(2h;\mathbb{R}))^{A}$ is $$\mathfrak{g}_A=\mathfrak{sp}(2h;\mathbb{R})^{A}=\{X\in \mathfrak{sp}(2h;\mathbb{R})| Xa=aX, \forall a\in A\}.$$
\end{lem}
\begin{proof}
The Lie group $G_A$ can be described as
$$G_A=\{P\in GL(2h;\mathbb{R})|P^{T}JP=J, Pa=aP, \forall a\in A\},$$
where $$J=\left(\begin{matrix} 0& I_h\\-I_h & 0\end{matrix}\right).$$
The Lie algebra $\mathfrak{g}_A$ of the matrix Lie group $G_A$ is given by:
$$\mathfrak{g}_A=\{X\in Mat(2h;\mathbb{R})|e^{tX}\in G_A, \forall t\in \mathbb{R}\}.$$
The Lie algebra of $Sp(2h;\mathbb{R})$, denoted by $\mathfrak{sp}(2h;\mathbb{R})$, satisfies $X^{T}J+JX=0$. It remains to determine how the condition $Pa=aP,\forall a\in A$ descends to $\mathfrak{g}_A$.

For $X\in \mathfrak{g}_A$, we require that $e^{tX}a=a e^{tX}$ for any $a\in A$. Expanding $e^{tX}$ as a power series $e^{tX}=\sum\limits_{j=o}^{g}\frac{(tX)^j}{j!}$, and disregarding terms of order $t^2$ or higher, we obtain
$tXa=atX$ for all $t\in \mathbb{R}$. Thus we have $Xa=aX$ for any $a \in A$, so we conclude that $\mathfrak{g}_A=\mathfrak{sp}(2h;\mathbb{R})^{\mathcal{D}}$.
\end{proof}

Notice that by composing $\Phi: \M(S,A)\to G_A$ with the adjoint representation $\textup{Ad}:G_A\to \textup{GL}(\mathfrak{g}_A)$, we can view the Lie algebra $\mathfrak{g}_A$ is a $\M(S,A)$-module. We observe:
 
\begin{lem}\label{submod}
The Lie algebra $\mathfrak{g}_A$ is a $\M(S,A)$-submodule of $H^1(\widehat{S};\mathbb{R})^{\otimes2}\cong (\mathbb{R}^{2h})^{\otimes 2}$.
\end{lem}
\begin{proof}
We first embed $\mathfrak{g}_A$ into $H^1(\widehat{S};\mathbb{R})^{\otimes2}\cong (\mathbb{R}^{2h})^{\otimes 2}$ as follows:
$$\mathfrak{g}_A=\mathfrak{sp}(2h;\mathbb{R})^{A}\subset \text{Mat}(2h;\mathbb{R})\cong (\mathbb{R}^{2h})^* \otimes \mathbb{R}^{2h}\cong  (\mathbb{R}^{2h})^{\otimes 2},$$
where the isomorphism $(\mathbb{R}^{2h})^* \cong  \mathbb{R}^{2h}$ is induced by the nondegenerate algebraic intersection form $i:H_1(\widehat{S};\mathbb{R})\times H_1(\widehat{S};\mathbb{R})  \to \mathbb{R}$.

It remains to show that the action of $\M(S,A)$ on $\mathfrak{g}_A$ is compatible with its action on $H^1(\widehat{S};\mathbb{R})^{\otimes2}$.

Take a symplectic basis $\{\alpha_1,\beta_1,\cdots,\alpha_h,\beta_h\}$ of $H^1(\widehat{S};\mathbb{R})$. For $X\in \mathfrak{g}_A$, we can write $$\begin{aligned}X&=\sum\limits_{j=1}^h (\alpha_j)^*\otimes X\alpha_j+\sum\limits_{j=1}^h (\beta_j)^*\otimes X\beta_j\in  (\mathbb{R}^{2h})^* \otimes \mathbb{R}^{2h} \\
&=\sum\limits_{j=1}^h \beta_j\otimes X\alpha_j+\sum\limits_{j=1}^h (-\alpha_j)\otimes X\beta_j \in (\mathbb{R}^{2h})^{\otimes 2}
 .\end{aligned}$$
The action of $f\in \M(S,A)$ on $X\in \mathfrak{g}_A$ is defined by $f\cdot X=FXF^{-1}$, where $F=\Phi(f)\in Sp(2h;\mathbb{R})$. Thus can express $f\cdot X$ as:
$$\begin{aligned}f\cdot X&=\sum\limits_{j=1}^h (\alpha_j)^*\otimes FXF^{-1}\alpha_j+\sum\limits_{j=1}^h (\beta_j)^*\otimes FXF^{-1}\beta_j\in  (\mathbb{R}^{2h})^* \otimes \mathbb{R}^{2h}  \\
&=\sum\limits_{j=1}^h \beta_j\otimes FXF^{-1}\alpha_j+\sum\limits_{j=1}^h (-\alpha_j)\otimes FXF^{-1}\beta_j \in (\mathbb{R}^{2h})^{\otimes 2} \\ 
&=\sum\limits_{j=1}^h F(F^{-1}\beta_j)\otimes FX(F^{-1}\alpha_j)+\sum\limits_{j=1}^h F(-F^{-1}\alpha_j)\otimes FX(F^{-1}\beta_j) \in (\mathbb{R}^{2h})^{\otimes 2}
.\end{aligned}$$
Since $\{ F^{-1}(\alpha_1),F^{-1}(\beta_1),\cdots,F^{-1}(\alpha_h),F^{-1}(\beta_h)\}$ is also a symplectic basis of $H^1(\widehat{S};\mathbb{R})$, we can rewrite $f\cdot X$ as:
$$ \sum\limits_{j=1}^h F(\beta_j)\otimes F(X\alpha_j)+\sum\limits_{j=1}^h F(-\alpha_j)\otimes F(X\beta_j).$$
This expression confirms that the action of $\M(S,A)$ on $\mathfrak{g}_A$ coincides with its action on $H^1(\widehat{S};\mathbb{R})^{\otimes2}$.
\end{proof}

Next we prove the degree-2 twisted cohomology group of $\M(S,A)$ with coefficients in $H^1(\widehat{S};\mathbb{R})^{\otimes2}$ is $0$ for sufficiently large $g$:

\begin{lem}\label{cor1}
Let $g,p,b$ be integers such that $p+b\ge1$. When $g\ge 41$, we have
$$H^1(\M(S,A);H^1(\widehat{S};\mathbb{R})^{\otimes2})=0.$$
\end{lem}
\begin{proof}
\textbf{Step 1}: We first prove the result for $A=H_1(\Sigma_g;\mathbb{Z}/\ell)$. Note that by definition $\M(S,A)$ is usually larger than the level-$\ell$ mapping class group $\M_{g,p}^b(\ell)$ which acts trivially on $H_1(\Sigma_{g,p}^b;\mathbb{Z}/\ell)$. 

In this case, the Prym representation $\fH_{g,p}^b(\ell;\Q)$ is $H^1(\widetilde{S};\Q)$. By filling in all punctures and gluing disks to all boundary components of $\widetilde{S}$, we obtain the following short exact sequence:
\begin{equation}\label{fill} 0 \to H^1(\widehat{S};\Q)\to \fH_{g,p}^b(\ell;\Q) \to \Q^{(p+b)\cdot|A|-1} \to 0.\end{equation}
We then compute the following twisted cohomology groups one by one:
\begin{enumerate}

\item Recall that we computed $H^{k-r}(\M_{g,p}^b(\ell);\fH_{g,p}^b(\ell;\Q)^{\otimes r})$ in Theorem \ref{main2} in the range $g\ge 2k^2+7k+2$, and one feature of this twisted cohomology is that it is $0$ when $k$ is odd, since the left-hand side of the isomorphism in Theorem \ref{main2} consists of even-degree terms. In particular, setting $k=3$ and $r=2$, we get
$$H^1(\M_{g,p}^b(\ell);\fH_{g,p}^b(\ell;\Q)^{\otimes 2})=0, \text{ if } g\ge 41.$$
Similarly, for $k=3$ and $r=1$, we have
$$H^2(\M_{g,p}^b(\ell);\fH_{g,p}^b(\ell;\Q))=0, \text{ if } g\ge 41.$$
\item The short exact sequence (\ref{fill}) of $\M_{g,p}^b(\ell)$-modules induces the following long exact sequence of twisted cohomology groups:
 $$\to H^3(\M_{g,p}^b(\ell); \Q^{(p+b)\cdot|A|-1} )\to H^2(\M_{g,p}^b(\ell); H^1(\widehat{S};\Q))\to H^2(\M_{g,p}^b(\ell);\fH_{g,p}^b(\ell;\Q))\to.$$
When $g\ge 41$, the term $H^3(\M_{g,p}^b(\ell); \Q^{(p+b)\cdot|A|-1} )$ is $0$ by Putman's Theorem \ref{andy0}, and $H^2(\M_{g,p}^b(\ell);\fH_{g,p}^b(\ell;\Q))$ is also $0$ as discussed in (1). Thus we have
$$ H^2(\M_{g,p}^b(\ell); H^1(\widehat{S};\Q))=0, \text{ if } g\ge 41.$$
\item By tensoring the above short exact sequence (\ref{fill}) with $\fH_{g,p}^b(\ell;\Q)$ on the left, we obtain another short exact sequence:
$$0 \to \fH_{g,p}^b(\ell;\Q)\otimes H^1(\widehat{S};\Q)\to \fH_{g,p}^b(\ell;\Q)^{\otimes 2} \to \fH_{g,p}^b(\ell;\Q)\otimes \Q^{(p+b)\cdot|A|-1}\to 0.$$
This short exact sequence of $\M_{g,p}^b(\ell)$-modules induces a long exact sequence of twisted cohomology groups:
$$\begin{aligned} & \to H^2(\M_{g,p}^b(\ell);\fH_{g,p}^b(\ell;\Q)\otimes \Q^{(p+b)\cdot|A|-1})\to H^1(\M_{g,p}^b(\ell);\fH_{g,p}^b(\ell;\Q)\otimes H^1(\widehat{S};\Q))\to \\
& \to H^1(\M_{g,p}^b(\ell);\fH_{g,p}^b(\ell;\Q)^{\otimes 2})\to \cdots\end{aligned}$$
Here $H^2(\M_{g,p}^b(\ell);\fH_{g,p}^b(\ell;\Q)\otimes \Q^{(p+b)\cdot|A|-1})\cong \bigoplus\limits_{(p+b)\cdot|A|-1} H^2(\M_{g,p}^b(\ell);\fH_{g,p}^b(\ell;\Q))$ and $H^1(\M_{g,p}^b(\ell);\fH_{g,p}^b(\ell;\Q)^{\otimes 2})$ are both 0 when $g\ge 41$ as discussed in (1), so 
$$H^1(\M_{g,p}^b(\ell);\fH_{g,p}^b(\ell;\Q)\otimes H^1(\widehat{S};\Q))=0, \text{ if } g\ge 41.$$
\item By tensoring the above short exact sequence (\ref{fill}) with $H^1(\widehat{S};\Q)$ on the right, we obtain the following short exact sequence:
$$0 \to H^1(\widehat{S};\Q)^{\otimes 2}\to \fH_{g,p}^b(\ell;\Q)\otimes H^1(\widehat{S};\Q) \to \Q^{(p+b)\cdot|A|-1}\otimes H^1(\widehat{S};\Q)  \to 0.$$
This short exact sequence of $\M_{g,p}^b(\ell)$-modules induces a long exact sequence of twisted cohomology groups:
$$\begin{aligned} \to& H^2(\M_{g,p}^b(\ell);\Q^{(p+b)\cdot|A|-1}\otimes H^1(\widehat{S};\Q) )\to H^1(\M_{g,p}^b(\ell);H^1(\widehat{S};\Q)^{\otimes 2})\to \\
& \to H^1(\M_{g,p}^b(\ell);\fH_{g,p}^b(\ell;\Q)\otimes H^1(\widehat{S};\Q))\to\cdots \end{aligned}$$
Here the first term is $0$ when $g\ge 41$ as discussed in (2), and the third term is $0$ when $g\ge 41$ as discussed in (3). Thus we have
$$H^1(\M_{g,p}^b(\ell);H^1(\widehat{S};\Q)^{\otimes 2})=0, \text{ if } g\ge 41.$$
\item Consider the finite-index subgroup $\M_{g,p}^b(\ell)$ of $\M(S,A)$. By Proposition \ref{transfer}, the associated transfer map
$$H^1(\M_{g,p}^b(\ell);H^1(\widehat{S};\mathbb{R})^{\otimes2})\to H^1(\M(S,A);H^1(\widehat{S};\mathbb{R})^{\otimes2})$$ is surjective. Thus from (4) we have
$$H^1(\M(S,A);H^1(\widehat{S};\Q)^{\otimes 2})=0, \text{ if } g\ge 41.$$
Tensoring the result with $\mathbb{R}$, we finish the proof for $A=H_1(\Sigma_g;\mathbb{Z}/\ell)$.
\end{enumerate}
\textbf{Step 2.} We now extend the proof to a general finite abelian group $A$.

Recall that $\widetilde{S}$ denotes the regular cover of $S$ corresponding to the homomorphism 
$$\pi_1(S)\to A.$$ Since $A$ is abelian, this map factors through $\pi_1(S)\to H_1(S;\mathbb{Z})$. Letting $\omega=|A|$, this map furthermore factors through $\pi_1(S)\to H_1(S;\mathbb{Z}/\omega) $. Therefore $\M_{g,p}^b(\omega)$ is a finite-index subgroup of $\M(S,A)$. 

Denote by $\widetilde{S_{\omega}}$ the regular cover of $\Sigma_{g}$ corresponding to the map $\pi_1(\Sigma_{g})\to H_1(\Sigma_{g};\mathbb{Z}/\omega).$ By part (4) in Step 1, we have
$$H^1(\M_{g,p}^b(\omega);H^1(\widetilde{S_{\omega}};\mathbb{R})^{\otimes 2})=0, \text{ if } g\ge 41.$$
By filling in all punctures and gluing disks to all boundary components in the cover $\widetilde{S}\to S$, we obtain a cover $\widehat{S}\to \Sigma_g$, where the deck group is a quotient of $H_1(\Sigma_g;\mathbb{Z}/\omega)$. Therefore $\widetilde{S_{\omega}}$ is a finite cover of $\widehat{S}$. 

Since the action of $\M_{g,p}^b(\omega)$ on $\widehat{S_{\omega}}$ commutes with the deck group of $\widetilde{S_{\omega}}\to\widehat{\widetilde{S}}$, Maschke’s Theorem implies that $H^1(\widehat{S};\mathbb{R})^{\otimes 2}$ is a direct summand of $H^1(\widetilde{S_{\omega}};\mathbb{R})^{\otimes2}$ as an $\M_{g,p}^b(\omega)$-module. Consequently,
$$H^1(\M_{g,p}^b(\omega);H^1(\widehat{S};\mathbb{R})^{\otimes2})=0, \text{ if } g\ge 41.$$
Finally, the transfer map associated with the finite-index subgroup $\M_{g,p}^b(\omega)<\M(S,A)$, with coefficients in $H^1(\widehat{S};\Q)^{\otimes 2}$,
$$H^1(\M_{g,p}^b(\omega);H^1(\widehat{S};\mathbb{R})^{\otimes2})\to H^1(\M(S,A);H^1(\widehat{S};\mathbb{R})^{\otimes2})$$ is surjective by Proposition \ref{transfer}. Thus we conclude that
\[H^1(\M(S,A);H^1(\widehat{S};\mathbb{R})^{\otimes2})=0, \text{ if } g\ge 41.\qedhere\]
\end{proof}

Now we combine the above lemmas to prove Theorem \ref{main1}:
\begin{proof}[\rm\bf{Proof of Theorem \ref{main1}}] To prove the infinitesimal rigidity of the symplectic Prym representation:
$$\Phi:\M(S,A) \to G_A,$$
we need to show that
$$H^1(\M(S,A); \mathfrak{g}_A)=0 \text{ if }g\ge 41,$$
where $\mathfrak{g}_A$ is the Lie algebra of the Lie group $G_A$.

From Lemma \ref{liealgebra}, we have $\mathfrak{g}_A=\mathfrak{sp}(2h;\mathbb{R})^{A}$. By Lemma \ref{submod}, we see $\mathfrak{sp}(2h;\mathbb{R})^{A}$ is an $\M(S,A)$-submodule of $H^1(\widehat{S};\mathbb{R})^{\otimes2}$. Therefore, it suffices to prove
$$H^1(\M(S,A);H^1(\widehat{S};\mathbb{R})^{\otimes2})=0, \text{ if } g\ge 41,$$
which is precisely the result of Lemma \ref{cor1}. This completes the proof.
\end{proof}

\nocite{1} 
\nocite{*} 
\bibliographystyle{plain}
\bibliography{ref5}

\begin{thebibliography}{10}

\bibitem{yang}
Michael~Francis Atiyah and Raoul Bott.
\newblock The yang-mills equations over riemann surfaces.
\newblock {\em Philosophical Transactions of the Royal Society of London.
  Series A, Mathematical and Physical Sciences}, 308(1505):523--615, 1983.

\bibitem{baily}
Walter~L Baily.
\newblock {On the moduli of Jacobian varieties}.
\newblock {\em Annals of Mathematics}, 71(2):303--314, 1960.

\bibitem{boldsen}
S{\o}ren~K Boldsen.
\newblock Improved homological stability for the mapping class group with
  integral or twisted coefficients.
\newblock {\em Mathematische Zeitschrift}, 270(1-2):297--329, 2012.

\bibitem{brown}
Kenneth~S Brown.
\newblock {\em Cohomology of groups}, volume~87.
\newblock Springer Science \& Business Media, 2012.

\bibitem{deII}
Pierre Deligne.
\newblock {Th{\'e}orie de Hodge: II}.
\newblock {\em Inst. Hautes Etudes Sci. Publ. Math}, 1971.

\bibitem{deligne}
Pierre Deligne.
\newblock {Th{\'e}orie de hodge: III}.
\newblock {\em Publications Math{\'e}matiques de l'IH{\'E}S}, 44:5--77, 1974.

\bibitem{esna}
H{\'e}l{\`e}ne Esnault and Michael Groechenig.
\newblock Cohomologically rigid local systems and integrality.
\newblock {\em Selecta Mathematica}, 24(5):4279--4292, 2018.

\bibitem{primer}
Benson Farb and Dan Margalit.
\newblock {\em A primer on mapping class groups (pms-49)}, volume~41.
\newblock Princeton university press, 2011.

\bibitem{galatius}
S{\o}ren Galatius.
\newblock {Lectures on the Madsen--Weiss theorem}.
\newblock {\em Moduli spaces of Riemann surfaces}, 20:139--167, 2011.

\bibitem{mixed}
Phillip Griffiths and Wilfried Schmid.
\newblock {Recent developments in Hodge theory: a discussion of techniques and
  results}.
\newblock {\em Oxford University Press}, pages 31--127, 1975.

\bibitem{gysin}
Werner Gysin.
\newblock {\em {Zur Homologietheorie der Abbildungen und Faserungen von
  Mannigfalti}gkeiten}.
\newblock PhD thesis, ETH Zurich, 1941.

\bibitem{harer}
John~L Harer.
\newblock {Stability of the homology of the mapping class groups of orientable
  surfaces}.
\newblock {\em Annals of Mathematics}, 121(2):215--249, 1985.

\bibitem{hatcher}
Allen Hatcher.
\newblock {A short exposition of the Madsen-Weiss theorem}.
\newblock {\em arXiv preprint arXiv:1103.5223}, 2011.

\bibitem{serre}
G.~Hochschild and J-P. Serre.
\newblock Cohomology of group extensions.
\newblock {\em Transactions of the American Mathematical Society},
  74(1):110--134, 1953.

\bibitem{ivanov}
Nikolai~V Ivanov.
\newblock {Stabilization of the homology of Teichm\"uller modular groups}.
\newblock {\em Algebra i Analiz}, 1(3):110--126, 1989.

\bibitem{kawa}
Nariya Kawazumi.
\newblock On the stable cohomology algebra of extended mapping class groups for
  surfaces.
\newblock In {\em Groups of Diffeomorphisms: In honor of Shigeyuki Morita on
  the occasion of his 60th birthday}, volume~52, pages 383--401. Mathematical
  Society of Japan, 2008.

\bibitem{klev}
Christian Klevdal and Stefan Patrikis.
\newblock G-rigid local systems are integral.
\newblock {\em arXiv preprint arXiv:2009.07350}, 2020.

\bibitem{rwk}
Alexander Kupers and Oscar Randal-Williams.
\newblock On the cohomology of {T}orelli groups.
\newblock {\em Forum Math. Pi}, 8:e7, 83, 2020.

\bibitem{LooiStable}
Eduard Looijenga.
\newblock Stable cohomology of the mapping class group with symplectic
  coefficients and of the universal {A}bel-{J}acobi map.
\newblock {\em J. Algebraic Geom.}, 5(1):135--150, 1996.

\bibitem{prym}
Eduard Looijenga.
\newblock Prym representations of mapping class groups.
\newblock {\em Geometriae Dedicata}, 64(1):69--83, 1997.

\bibitem{var}
Alexander Lubotzky, Andy~R Magid, and Andy~Roy Magid.
\newblock {\em Varieties of representations of finitely generated groups},
  volume 336.
\newblock American Mathematical Soc., 1985.

\bibitem{Madsen}
Ib~Madsen and Michael Weiss.
\newblock {The stable moduli space of Riemann surfaces: Mumford's conjecture}.
\newblock {\em Annals of mathematics}, pages 843--941, 2007.

\bibitem{miller}
Edward~Y Miller.
\newblock The homology of the mapping class group.
\newblock {\em Journal of Differential Geometry}, 24(1):1--14, 1986.

\bibitem{morita}
Shigeyuki Morita.
\newblock Characteristic classes of surface bundles.
\newblock {\em Inventiones mathematicae}, 90(3):551--577, 1987.

\bibitem{MFK}
D.~Mumford, J.~Fogarty, and F.~Kirwan.
\newblock {\em Geometric invariant theory}, volume~34 of {\em Ergebnisse der
  Mathematik und ihrer Grenzgebiete (2) [Results in Mathematics and Related
  Areas (2)]}.
\newblock Springer-Verlag, Berlin, third edition, 1994.

\bibitem{mumford}
David Mumford.
\newblock Towards an enumerative geometry of the moduli space of curves.
\newblock {\em Arithmetic and Geometry: Papers Dedicated to IR Shafarevich on
  the Occasion of His Sixtieth Birthday. Volume II: Geometry}, pages 271--328,
  1983.

\bibitem{perron}
Bernard Perron.
\newblock {Filtration de Johnson et groupe de Torelli modulo p, p premier}.
\newblock {\em Comptes Rendus Mathematique}, 346(11-12):667--670, 2008.

\bibitem{peters}
Chris~AM Peters and Joseph~HM Steenbrink.
\newblock {\em {Mixed Hodge Structures}}, volume~52.
\newblock Springer Science \& Business Media, 2008.

\bibitem{note}
Andrew Putman.
\newblock The action on homology of finite groups ofautomorphisms of surfaces
  and graphs.

\bibitem{AndyPicard}
Andrew Putman.
\newblock The {P}icard group of the moduli space of curves with level
  structures.
\newblock {\em Duke Math. J.}, 161(4):623--674, 2012.

\bibitem{AndySecond}
Andrew Putman.
\newblock The second rational homology group of the moduli space of curves with
  level structures.
\newblock {\em Advances in Mathematics}, 229(2):1205--1234, 2012.

\bibitem{AndyStable}
Andrew Putman.
\newblock The stable cohomology of the moduli space of curves with level
  structures.
\newblock {\em arXiv preprint arXiv:2209.06183}, 2022.

\bibitem{partial}
Andrew Putman.
\newblock {Partial Torelli groups and homological stability}.
\newblock {\em Algebraic \& Geometric Topology}, 23(8):3417--3496, 2023.

\bibitem{rw}
Oscar Randal-Williams.
\newblock Resolutions of moduli spaces and homological stability.
\newblock {\em Journal of the European Mathematical Society}, 18(1):1--81,
  2015.

\bibitem{rw1}
Oscar Randal-Williams.
\newblock Cohomology of automorphism groups of free groups with twisted
  coefficients.
\newblock {\em Selecta Math. (N.S.)}, 24(2):1453--1478, 2018.

\bibitem{sato}
Masatoshi Sato.
\newblock The abelianization of the level d mapping class group.
\newblock {\em Journal of Topology}, 3(4):847--882, 2010.

\bibitem{wahl}
Nathalie Wahl.
\newblock {The Mumford conjecture, Madsen-Weiss and homological stability for
  mapping class groups of surfaces}.
\newblock {\em Moduli spaces of Riemann surfaces}, 20:109--138, 2013.

\bibitem{weil}
Andr{\'e} Weil.
\newblock Remarks on the cohomology of groups.
\newblock {\em Annals of Mathematics}, pages 149--157, 1964.

\end{thebibliography}

\end{document}